\numberwithin{equation}{section}
\numberwithin{figure}{section}
\theoremstyle{plain}
\newtheorem{theorem}{Theorem}[section]
\newtheorem{lemma}{Lemma}[section]
\theoremstyle{definition}
\newtheorem{remark}{Remark}[section]
\def\be{\begin{equation}\displaystyle}
\def\ee{\end{equation}}
\def\bel{\begin{equation} \displaystyle \begin{array}{l} }
\def\eel{\end{array} \end{equation} }
\def\bell{\begin{equation} \displaystyle \begin{array}{ll}  }
\def\eell{\end{array} \end{equation} }
\def\bea{\begin{eqnarray}}
\def\eea{\end{eqnarray} }
\def\beas{\begin{eqnarray*}}
\def\eeas{\end{eqnarray*} }
\def\RR{\mathbb{R}}
\def\bx{\mathbf{x}}
\def\bar#1{{\overline #1}}
\def\R2+{\RR ^2_+}
\newcommand{\Og}{\Omega}
\newcommand{\gm}{\gamma}
\newcommand{\vep}{\varepsilon}
\newcommand{\ba}{\begin{array}}
\newcommand{\ea}{\end{array}}
\newcommand{\tphi}{\tilde{\phi}}
\DeclareMathOperator*{\argmin}{arg\,min}
\journal{Physica D}
\begin{document}

\begin{frontmatter}

\title{Ground states and energy
asymptotics of the nonlinear Schr\"{o}dinger equation}
%\tnotetext[mytitlenote]{Fully documented templates are available in the elsarticle package on \href{http://www.ctan.org/tex-archive/macros/latex/contrib/elsarticle}{CTAN}.}

%% Group authors per affiliation:
%\author{Weizhu Bao and Xinran Ruan}
%\address{Department of Mathematics, National University of Singapore,
%Singapore 119076}

%% or include affiliations in footnotes:

\author[]{Xinran Ruan\corref{mycorrespondingauthor}}
%\cortext[mycorrespondingauthor]{Corresponding author}
\ead{a0103426@u.nus.edu}

\address{Department of Mathematics, National University of Singapore,
Singapore, 119076}
%\address[mysecondaryaddress]{360 Park Avenue South, New York}

\begin{abstract}
We study analytically the existence and uniqueness of the ground state of the nonlinear Schr\"{o}dinger equation (NLSE) with a general power nonlinearity described by the power index $\sigma\ge0$.
%, where a nonlinear term of form $|\phi|^{2\sigma}\phi$ appears.
For the NLSE under a box or a harmonic potential, we can derive explicitly the 
approximations of the ground states and their corresponding energy and chemical potential 
in weak or strong interaction regimes with a fixed nonlinearity $\sigma$.
Besides, we study the case where the nonlinearity $\sigma\to\infty$ with a fixed interaction strength.
In particular, a bifurcation in the ground states is observed. 
Numerical results in 1D and 2D will be reported to support our asymptotic results.
\end{abstract}

\begin{keyword}
nonlinear Schr\"{o}dinger equation\sep
 ground state\sep energy asymptotics\sep repulsive interaction
\MSC[2010] 35B40\sep 35P30\sep 35Q55\sep 65N25
\end{keyword}

\end{frontmatter}

%\linenumbers

\section{Introduction}
%In quantum physics and chemistry as well as solid-state physics, the

In this paper, we will consider the dimensionless
time-independent nonlinear Schr\"{o}dinger equation (NLSE)
in $d$ dimensions ($d=3,2,1$) \cite{Bao,Bao2013,BJP,Dalfovo1,Pitaevskii,Sulem}
\begin{equation}\label{eq:eig}
\left[-\frac{1}{2}\Delta+V(\mathbf{x})+\beta|\phi(\bx)|^{2\sigma}\right]\phi(\bx)=
\mu\phi(\mathbf{x}),\qquad \mathbf{x}\in\Omega\subseteq \mathbb{R}^d,
\end{equation}
where $\phi:=\phi(\bx)$ is the wave function (or eigenfunction) satisfying the normalization condition
\be\label{norm}
\|\phi\|_2^2:=\int_{\Og} |\phi(\bx)|^2d\bx=1,
\ee
$V:=V(\bx)$ is a given real-valued potential bounded below,
$\beta\ge0$ is a dimensionless constant describing
the repulsive (defocussing) interaction strength,
$\sigma\ge0$ represents different nonlinearities,
and the eigenvalue (or chemical potential in physics literature)
$\mu:=\mu(\phi)$ is defined as \cite{Bao,Bao2013,Dalfovo1,Pitaevskii}
\begin{equation}\label{def:mu}
\mu(\phi)=E(\phi)+\frac{\sigma\beta}{\sigma+1}\int_{\Omega}|\phi(\bx)|^{2\sigma+2}d\mathbf{x},
\end{equation}
with the energy $E:=E(\phi)$ defined as \cite{Bao2013,Sulem}
\begin{equation}\label{def:E}
E(\phi)=\int_{\Omega}\left[\frac{1}{2}|\nabla\phi(\bx)|^2+V(\mathbf{x})|\phi(\bx)|^2+
\frac{\beta}{\sigma+1}|\phi(\bx)|^{2\sigma+2}\right]d\mathbf{x}.
\end{equation}
If $\Omega$ is bounded, the homogeneous
Dirichlet BC, i.e. $\phi(\bx)|_{\partial \Omega}=0$, needs to be imposed.
Thus, the time-independent NLSE (\ref{eq:eig}) is a nonlinear eigenvalue problem under
the constraint $\|\phi\|=1$. It is a mean field model arising from  Bose-Einstein condensates
(BECs) \cite{Anderson,Bao,Dalfovo1,GPE_BEC1}, nonlinear optics \cite{optics},
and some other applications \cite{Abl,Pitaevskii,Sulem}
that can be obtained from
the N-body Schr\"{o}dinger equation via the Hartree ansatz and mean field approximation \cite{Bao2013,MFT,TGmath,Pitaevskii}.
When $\beta=0$ or $\sigma=0$, it collapses to the time-independent Schr\"{o}dinger equation. 
When $\sigma=1$, the nonlinearity is cubic and it is usually known as the Gross-Pitaevskii equation (GPE) \cite{Bao,Dalfovo1,Dalfovo2,Pitaevskii}.
When $\sigma=2$, the nonlinearity is quintic and it is used to model the Tonks-Girardeau (TG) gas in BEC \cite{Girardeau1,Lieb_model,TGmath,Tonks}.

The ground state of the NLSE (\ref{eq:eig}) is usually defined as the minimizer
of the non-convex minimization problem (or constrained minimization problem) \cite{Bao,Bao2013,Dalfovo1,GPE_BEC1}
\begin{equation}\label{def:ground}
\phi_g=\argmin_{\phi\in{S}}E(\phi),
\end{equation}
where $S=\{\phi\, |\, \|\phi\|_2^2:=\int_{\Omega} |\phi(\bx)|^2d\bx =1,\ E(\phi)<\infty,
\ \phi|_{\partial\Omega}=0 \ \hbox{if} \ \Omega
\ \hbox{is bounded}\}$. 
Since $S$ is a nonconvex set,  the problem (\ref{def:ground}) is a nonconvex minimization
problem.
It is easy to see that the ground state $\phi_g$ satisfies the
time-independent NLSE (\ref{eq:eig}). Hence
it is an eigenfunction (or stationary state) of (\ref{eq:eig}) with the least energy.

The main purpose of this paper is to study the existence and uniqueness of the ground state of the NLSE and its approximations under a box or a harmonic potential in special parameter regimes.
The rest of this paper is organized as follows. 
In Section \ref{existence}, we study analytically the existence, uniqueness and nonexistence of the ground state of the NLSE.
In Section \ref{harb:NLSE}, we derive the ground state approximations and energy asymptotics under a harmonic potential for different $\beta$'s and $\sigma$'s.
Similar results are presented in Section \ref{box:NLSE} for the NLSE under a box potential.
Some conclusions are drawn in Section \ref{conclusion}.

%%%%%%%%%%%%%%%%%%%%%%%%%%%%%%%%%%%%%%%%
% Section 2
%%%%%%%%%%%%%%%%%%%%%%%%%%%%%%%%%%%%%%%%
\section{Existence and uniqueness}\label{existence}
In this section, 
we will generalize the existence and uniqueness results for the GPE case \cite{Bao2013,Lieb,Wein}, where $\sigma=1$, to a general case with a nonnegative $\sigma$. 
For simplicity, we introduce the function space
\begin{equation*}
X=\left\{\phi\in H^1(\mathbb R^d)\left| \|\phi\|_X^2=
\|\phi\|^2+\|\nabla\phi\|^2+\|\phi\|_{L_V}<\infty\right.\right\}.
\end{equation*}
where $\|\phi\|_{L_V}:=\int_{\mathbb R^d}V(\bx)|\phi(\bx)|^2\,d\bx$. 
The following embedding results hold \cite{Bao2013}.
\begin{lemma}\label{lem:compact} Under the assumption that $V(\bx)$ is nonnegative and satisfies the  confining condition,
i.e. $\lim\limits_{R\to\infty} V(\bx)=\infty$, for $\bx\in {\mathbb R}^d$ where $d=1,2,3$, 
we have that
the embedding $X\hookrightarrow L^{p}(\mathbb R^d)$
is compact provided that exponent $p$
 satisfies
\begin{equation}
\begin{cases}
p\in[2,6),\quad d=3,\\
p\in[2,\infty),\quad d=2,\\
p\in [2,\infty],\quad d=1.
\end{cases}
\end{equation}
\end{lemma}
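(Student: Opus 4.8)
The plan is to exploit the standard mechanism by which a confining potential restores compactness of the otherwise merely continuous Sobolev embedding $H^1(\RR^d) \hookrightarrow L^p(\RR^d)$: the growth of $V$ at infinity prevents mass from escaping to spatial infinity, while the Rellich--Kondrachov theorem handles compactness on bounded regions. First I would take an arbitrary bounded sequence $\{\phi_n\} \subset X$, say $\|\phi_n\|_X \le M$. Since $X$ equipped with $\|\cdot\|_X$ is a Hilbert space (its squared norm is the quadratic form $\|\phi\|^2 + \|\nabla\phi\|^2 + \int_{\RR^d} V|\phi|^2$), it is reflexive, so after passing to a subsequence I may assume $\phi_n \rightharpoonup \phi$ weakly in $X$; the goal is to upgrade this to strong convergence $\phi_n \to \phi$ in $L^p(\RR^d)$ for the stated range of $p$.

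The argument then splits $\RR^d$ into a large ball $B_R = \{|\bx| < R\}$ and its complement. On $B_R$ the embedding $H^1(B_R) \hookrightarrow L^p(B_R)$ is compact for every $p$ in the stated (subcritical, or $d=1$) range by Rellich--Kondrachov, and since weak convergence in $X$ restricts to weak convergence in $H^1(B_R)$, we obtain $\phi_n \to \phi$ strongly in $L^p(B_R)$. To control the exterior contribution I would use the confining condition: setting $m_R = \inf_{|\bx| \ge R} V(\bx)$, the hypothesis $\lim_{|\bx|\to\infty} V(\bx) = \infty$ gives $m_R \to \infty$, whence
\[
\int_{|\bx| \ge R} |\phi_n|^2 \, d\bx \le \frac{1}{m_R} \int_{|\bx|\ge R} V\,|\phi_n|^2 \, d\bx \le \frac{M^2}{m_R},
\]
uniformly in $n$. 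Thus the $L^2$ mass in the tail is uniformly small.

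For $p = 2$ this already closes the argument. For $p > 2$ the remaining step --- which I expect to be the main obstacle --- is to transfer the uniform smallness of the $L^2$ tail into the $L^p$ tail. Here I would interpolate between $L^2$ and a higher exponent $L^q$ with $q$ in the admissible range and $q > p$, writing $\|\phi_n\|_{L^p(|\bx|\ge R)} \le \|\phi_n\|_{L^2(|\bx|\ge R)}^{1-\theta}\,\|\phi_n\|_{L^q(|\bx|\ge R)}^{\theta}$ for the appropriate $\theta \in (0,1)$; the second factor is bounded uniformly in $n$ by the continuous Sobolev embedding $X \hookrightarrow L^q$ together with $\|\phi_n\|_X \le M$, while the first tends to zero uniformly in $n$ by the tail estimate above. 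This is exactly where the exclusion of the critical exponent $p = 6$ in $d = 3$ enters: at $p=6$ there is no admissible $q > p$ to interpolate against, and concentration at the critical scaling can persist. Combining the local strong convergence on $B_R$ with the uniform tail smallness, and letting first $n \to \infty$ and then $R \to \infty$, gives $\phi_n \to \phi$ strongly in $L^p(\RR^d)$. The endpoint $p = \infty$ in $d = 1$ I would treat separately via Morrey's inequality, using that $H^1(\RR)$ embeds into the uniformly continuous functions decaying at infinity, so that the tail decay also controls the sup norm outside $B_R$.
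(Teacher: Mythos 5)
Your argument is correct. Note that the paper itself gives no proof of this lemma --- it is quoted as a known embedding result with a citation to the Bao--Cai review --- so there is no in-paper argument to compare against; what you have written is precisely the standard proof that appears in that reference and in the broader literature on Schr\"odinger operators with confining potentials: reflexivity of $X$ to extract a weak limit, Rellich--Kondrachov on a large ball, the Chebyshev-type tail bound $\int_{|\bx|\ge R}|\phi_n|^2\,d\bx\le M^2/\inf_{|\bx|\ge R}V$, and interpolation against the continuous (critical or subcritical) Sobolev embedding to push the tail smallness from $L^2$ up to $L^p$. Your identification of why $p=6$ must be excluded in $d=3$ (no strictly higher admissible exponent to interpolate against) is exactly the right reason. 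Two small points you could tighten if writing this out in full: (i) in the final limsup argument you also need the tail of the limit $\phi$ itself to be small, which follows from weak lower semicontinuity of the $X$-norm, $\|\phi\|_X\le\liminf_n\|\phi_n\|_X\le M$; and (ii) for the endpoint $p=\infty$, $d=1$, the cleanest route is the Agmon-type inequality $\sup_{|x|\ge R}|\phi(x)|^2\le 2\|\phi\|_{L^2(|x|\ge R)}\|\phi'\|_{L^2(\mathbb{R})}$, obtained by writing $|\phi(x)|^2=-2\int_x^\infty\phi\phi'$, which converts the uniform $L^2$ tail smallness directly into uniform sup-norm tail smallness; your appeal to Morrey plus decay is workable but this makes the quantitative dependence explicit.
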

In the $d$-dimensional space, where $d=1,2,3$, let $C_b(d,\sigma)$ be the
best constant in the following inequality \cite{Wein}
\be\label{bestc}
C_b(d,\sigma):=\inf_{0\ne f\in H^1({\mathbb R}^d)} \frac{\|\nabla
f\|^{d\sigma}\|f\|^{2+(2-d)\sigma}}{\|f\|_{{2\sigma+2}}^{2\sigma+2}}.
\ee

Then we have the following theorem regarding the existence and uniqueness of the ground state.
\begin{theorem}\label{thm:mres}(Existence and uniqueness)
Suppose  $V(\bx)\ge 0$ satisfies the confining condition, i.e.
$\lim\limits_{|\bx|\to\infty}V(\bx)=+\infty$, where $\bx\in\mathbb{R}^d$, then  there exists a
minimizer
 $\phi_g\in S$ for
 \eqref{def:ground} if one of the following conditions holds

(i)  $\beta\in\mathbb{R}$ for $0<d\sigma<2$;

(ii) $\beta>-\frac{(\sigma+1)}{2}C_b(d,\sigma)$ when $d\sigma=2$;

(iii) $\beta\ge0$ for $d\sigma>2$.

Furthermore, the ground state  can be chosen as nonnegative $|\phi_g|$, and $\phi_g=e^{i\theta}|\phi_g|$ for some constant $\theta\in\mathbb{R}$.
For $\sigma>0$ and $\beta\ge0$, the nonnegative ground state is unique. 

In contrast, there exists no ground state if one of the following conditions holds

(i') $\beta<-\frac{(\sigma+1)}{2}C_b(d,\sigma)$ when $d\sigma=2$;

(ii') $\beta<0$ for $d\sigma>2$.

\end{theorem}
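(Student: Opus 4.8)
The plan is to obtain existence by the direct method, reduce to nonnegative states via the diamagnetic inequality, prove uniqueness by convexifying the problem in the density variable, and settle nonexistence through a mass-preserving rescaling.

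First I would establish that $E$ is bounded below and coercive on $S$ in each admissible case. Since $V\ge 0$ and $\|\phi\|=1$, the best-constant inequality \eqref{bestc} gives $\||\phi|\|_{2\sigma+2}^{2\sigma+2}\le C_b(d,\sigma)^{-1}\|\nabla\phi\|^{d\sigma}$, so for $\beta<0$
\[
E(\phi)\ge \tfrac12\|\nabla\phi\|^2+\frac{\beta}{(\sigma+1)C_b(d,\sigma)}\|\nabla\phi\|^{d\sigma}+\|\phi\|_{L_V},
\]
while $E\ge 0$ trivially when $\beta\ge 0$. The three cases split precisely along the competition between the exponents $2$ and $d\sigma$: for $0<d\sigma<2$ Young's inequality absorbs the lower-order term for every $\beta$; for $d\sigma=2$ the two powers coincide and coercivity requires the coefficient $\tfrac12+\frac{\beta}{(\sigma+1)C_b(d,\sigma)}$ to be positive, which is exactly the threshold in (ii); for $d\sigma>2$ with $\beta\ge 0$ positivity is immediate. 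In each admissible case a minimizing sequence $(\phi_n)$ is bounded in $X$, so by Lemma~\ref{lem:compact} the embedding $X\hookrightarrow L^{2\sigma+2}$ is compact and, along a subsequence, $\phi_n\to\phi_g$ strongly in $L^{2\sigma+2}$ and weakly in $X$. Weak lower semicontinuity of $\|\nabla\cdot\|^2$ and $\|\cdot\|_{L_V}$, together with strong $L^{2\sigma+2}$ convergence of the interaction term, yields $E(\phi_g)\le\liminf E(\phi_n)=\inf_S E$, while strong $L^2$ convergence preserves $\phi_g\in S$.

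For the qualitative properties, replacing $\phi_g$ by $|\phi_g|$ leaves $\|\cdot\|_2$, $\|\cdot\|_{L_V}$ and $\|\cdot\|_{2\sigma+2}$ unchanged while the diamagnetic inequality gives $\|\nabla|\phi_g|\|\le\|\nabla\phi_g\|$; hence $|\phi_g|$ is again a minimizer, and equality forces $\phi_g=e^{i\theta}|\phi_g|$. For uniqueness with $\sigma>0$ and $\beta\ge 0$ I would pass to the density $\rho=|\phi|^2$, exploiting that $\{\rho\ge 0:\int_\Omega\rho=1\}$ is convex (unlike $S$). In these variables the potential term is linear in $\rho$, the kinetic term $\tfrac18\int|\nabla\rho|^2/\rho$ is convex by joint convexity of $(\rho,p)\mapsto|p|^2/\rho$, and the interaction term $\frac{\beta}{\sigma+1}\int\rho^{\sigma+1}$ is convex, strictly so when $\beta>0$. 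Strict convexity then forces a unique minimizing density, hence a unique nonnegative $\phi_g=\sqrt\rho$; the boundary case $\beta=0$ reduces to the linear operator $-\tfrac12\Delta+V$, whose lowest eigenvalue is simple with positive eigenfunction. For the nonexistence claims I would drive $E$ to $-\infty$ using $\phi_\lambda(\bx)=\lambda^{d/2}\phi(\lambda\bx)$, which preserves $\|\phi_\lambda\|=1$, scales $\|\nabla\phi_\lambda\|^2=\lambda^2\|\nabla\phi\|^2$ and $\|\phi_\lambda\|_{2\sigma+2}^{2\sigma+2}=\lambda^{d\sigma}\|\phi\|_{2\sigma+2}^{2\sigma+2}$, while $\int V|\phi_\lambda|^2=\int V(\by/\lambda)|\phi(\by)|^2\,d\by$ stays bounded as $\lambda\to\infty$. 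When $d\sigma>2$ and $\beta<0$ the negative $\lambda^{d\sigma}$ interaction dominates the $\lambda^2$ kinetic term; when $d\sigma=2$ and $\beta<-\frac{\sigma+1}{2}C_b(d,\sigma)$, choosing $\phi$ near an optimizer of \eqref{bestc} makes the effective $\lambda^2$ coefficient negative. In both cases $\inf_S E=-\infty$ and no minimizer exists.

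The main obstacle is the uniqueness step. The convexity of the kinetic energy in $\rho$ is the crucial and least routine ingredient, and it requires a careful justification of the change of variables $\phi=\sqrt\rho$: in particular, showing via elliptic regularity and the strong maximum principle that the nonnegative ground state is strictly positive in the interior, so that $\nabla\rho/\sqrt\rho$ is well-defined, and confirming that the convex functional is minimized in the interior of the density constraint set. By comparison, the existence and nonexistence arguments are routine once the coercivity bookkeeping at the critical exponent $d\sigma=2$ — where the sign of $\tfrac12+\frac{\beta}{(\sigma+1)C_b(d,\sigma)}$ is decisive — is handled with care.
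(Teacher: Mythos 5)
Your proposal follows essentially the same route as the paper: the direct method with the Gagliardo--Nirenberg/best-constant inequality to get coercivity in each exponent regime, the compact embedding of Lemma~\ref{lem:compact} to pass to the limit, the pointwise inequality $|\nabla|\phi||\le|\nabla\phi|$ to reduce to nonnegative minimizers, and the mass-preserving dilation $\lambda^{d/2}\phi(\lambda\bx)$ (with a near-optimizer of \eqref{bestc} in the critical case) for nonexistence. The only difference is that you spell out the uniqueness step via strict convexity in the density variable $\rho=|\phi|^2$, which the paper merely asserts as ``strict convexity of the energy functional''; your version is a correct elaboration of the same idea.
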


\begin{proof}
  We separate the proof into the existence and nonexistence part.

(1) Existence. The inequality \cite{LiebL}
\be
|\nabla|\phi(\bx)||\leq |\nabla\phi(\bx)|,\quad \text{a.e.}\quad \bx\in\Bbb R^d,
\ee
implies
\be
E(\phi)\ge E(|\phi|),
\ee
where the equality holds iff $\phi=e^{i\theta}|\phi|$ for some constant $\theta\in\mathbb{R}$. Therefore, it suffices to consider the real-valued functions   
for the rest part of the proof.

We first claim that the energy is bounded from below under the condition (i), (ii) or (iii).
For case (iii), it is trivial to see  that the energy is bounded below by 0. 
For case (i) or (ii), the lower boundedness can be shown via the Gagliardo-Nirenberg inequality.
For any $\phi\in S$, 
the Gagliardo-Nirenberg inequality implies 
\be\label{G-N}
\|\phi\|_{L^{2\sigma+2}(\mathbb{R}^d)}\le \frac{1}{C_b(d,\sigma)}\|\nabla \phi\|^{\frac{d\sigma}{2\sigma+2}}\|\phi\|^{\frac{2+(2-d)\sigma}{2\sigma+2}},
\ee
where $\sigma$ is required to satisfy $0<\sigma\le2$ when $d=3$ and $\sigma>0$ when $d=1$ or 2. 
For case (i), noticing $d\sigma<2$ and applying the H\"{o}lder's inequality, we have 
\be
\|\phi\|_{L^{2\sigma+2}(\mathbb{R}^d)}^{2\sigma+2}\le \frac{1}{C_b(d,\sigma)}\|\nabla \phi\|^{d\sigma}\|\phi\|^{2+(2-d)\sigma}\le \vep\|\nabla \phi\|^{2}+C(d,\sigma,\vep),
\ee 
which yields the claim by choosing a sufficiently small $\vep$.\\
For case (ii), we have $d\sigma=2$ and the inequality \eqref{G-N} now becomes
\be
\|\phi\|_{L^{2\sigma+2}(\mathbb{R}^d)}^{2\sigma+2}\le \frac{1}{C_b(d,\sigma)}\|\nabla \phi\|^2\|\phi\|^{4/d}=\frac{1}{C_b(d,\sigma)}\|\nabla \phi\|^2,
\ee 
which implies that if $\beta>-\frac{(\sigma+1)}{2}C_b(d,\sigma)$, we have 
\be
\frac{1}{2}\|\nabla \phi\|^2+\frac{\beta}{\sigma+1}\|\phi\|_{L^{2\sigma+2}(\mathbb{R}^d)}^{2\sigma+2}\ge 0,
\ee
and therefore the energy \eqref{def:E} is bounded from below.

Hence for cases under condition (i), (ii) or (iii), we can take a sequence $\{\phi^n\}_{n=1}^{\infty}$ to minimize
the energy \eqref{def:E} in $S$, and the sequence is uniformly bounded in $X$. Taking a
weakly convergent subsequence, which is denoted as the original sequence for simplicity, we have
\be
\phi^n\rightharpoonup\phi^{\infty}, \text{ weakly in $X$}.
\ee
Lemma \ref{lem:compact} ensures that $\{\phi^n\}_{n=1}^{\infty}$ converges to $\phi^{\infty}$ in $L^p$ where $p$ is given in Lemma \ref{lem:compact}, and we get $\|\phi^{\infty}\|=1$ in particular by taking $p=2$.
Further,  from the lower-semicontinuity of the $H_1$ norm and Fatou's lemma, we can show 
$E(\phi^{\infty})\le\liminf_{n\to\infty} E(\phi^n)$, 
which implies that  $\phi^{\infty} \in S$ is indeed a ground state.
Thus we proved the existence of the ground state. 
When $\beta\ge0$ and $\sigma>0$, 
 the uniqueness of the ground state comes from the strict convexity of the energy functional.
 
 (2) Nonexistence. We take $\phi(\bx)=\pi^{-d/4}e^{-|\bx|^2/2}$ and $\phi^{\vep}(\bx)=\vep^{-d/2}\phi(\bx/\vep)$. 
 It is easy to check that $\|\phi^{\vep}(\bx)\|=\|\phi(\bx)\|=1$ for all $\vep>0$ and
 \be
 E(\phi^\vep)=\frac{\|\nabla\phi\|^2}{2}\frac{1}{\vep^2}+\frac{\beta\|\phi\|_{{2\sigma+2}}^{2\sigma+2}}{(\sigma+1)}\frac{1}{\vep^{d\sigma}}+O(1).
 \ee
Under condition (ii'), i.e. $\beta<0$ and $d\sigma>2$, we have 
$
E(\phi^\vep) \to-\infty
$
 as $\vep\to0^+$ and therefore there exists no ground state. For case (i'), we have $-\frac{2\beta}{\sigma+1}>C_b(d,\sigma)$ and therefore may choose $\phi_b(x)$ satisfying $\|\phi_b\|=1$ and 
 $\frac{1}{2}\|\nabla
\phi_b\|^{2}+\frac{\beta}{\sigma+1}\|\phi_b\|_{{2\sigma+2}}^{2\sigma+2}<0$.
Then we have
\be
 E(\phi_b^\vep)=\left(\frac{1}{2}\|\nabla\phi_b\|^2+\frac{\beta}{\sigma+1}\|\phi_b\|^{2\sigma+2}_{{2\sigma+2}}\right)\frac{1}{\vep^{2}}+O(1)\to-\infty \text{ as } \vep\to0^+.
\ee
 Therefore, under condition (i') or (ii'), $E(\phi^\vep)$ is not bounded from below and thus there exists no ground state.
\end{proof}

\section{Approximations under a harmonic potential}\label{harb:NLSE}
\setcounter{equation}{0}
\setcounter{figure}{0}
In this section, we take $\Omega={\mathbb R}^d$ and the external potential $V(\mathbf{x})=\sum_{j=1}^d\frac{\gm_j^2x_j^2}{2}$ to be a harmonic potential 
with $\gm_j>0$. 
We denote the ground state as $\phi_g^{\beta,\sigma}(\bx)$ for given $\beta\ge0$ and $\sigma\ge0$, and
denote the corresponding energy and chemical potential as
$E_g(\beta,\sigma)=E(\phi_g^{\beta,\sigma})$ and $\mu_g(\beta,\sigma)=\mu(\phi_g^{\beta,\sigma})$,
respectively.
When $\sigma=0$, the NLSE \eqref{eq:eig} collapses to a linear Schr\"{o}dinger
equation, which has been well studied. 
From now on, we consider the case $\sigma>0$ only.

\subsection{For different $\beta$ under a fixed $\sigma>0$}\label{gGPE_har}
For problems with fixed nonlinearity, Theorem \ref{thm:mres} indicate that the ground state exists for all $\beta$ if $d\sigma<2$ and for all $\sigma>0$ if $\beta\ge0$. 
Therefore, the limiting behavior of the ground state as $\beta\to0$ or $\beta\to\infty$ for general $\sigma>0$ and $\beta\to-\infty$ for $\sigma$ satisfying $d\sigma<2$ will be of great interest. 
For simplicity, only the isotropic case $\gm_1=\gm_2=\dots=\gm_d=\gm$ is considered in this section.

When $0< \beta\ll1$, the results can be summarized in the following lemma. 

\begin{lemma}\label{lemma:weak_nlse}
When $0< \beta\ll1$, i.e. the weakly repulsive interaction regime,
the ground state $\phi_g^{\beta,\sigma}$ can be approximated as
\begin{equation}\label{harg678}
\phi_g^{\beta,\sigma}(\mathbf{x})\approx\phi_g^{0}
(\mathbf{x}):=\prod_{j=1}^d\left(\frac{\gm}{\pi}\right)^{\frac{1}{4}}e^{-\frac{\gm{x}_j^2}{2}},
\qquad \bx\in{\mathbb R}^d,
\end{equation}
and the corresponding energy and chemical potential can be approximated as
\begin{align}\label{nlsehar1}
E_g(\beta,\sigma)&=\frac{d\gm}{2}+\frac{\beta}{(\sigma+1)^{\frac{d+2}{2}}}
\left(\frac{\gm}{\pi}\right)^{\frac{d\sigma}{2}}+o(\beta),\\
\label{nlsehar2}
\mu_g(\beta,\sigma)&=\frac{d\gm}{2}+\frac{\beta}{(\sigma+1)^{\frac{d}{2}}}
\left(\frac{\gm}{\pi}\right)^{\frac{d\sigma}{2}}+o(\beta).
\end{align}
\end{lemma}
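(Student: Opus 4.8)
The plan is to bracket $E_g(\beta,\sigma)$ between a variational upper bound, obtained by testing the energy functional against the explicit linear ground state $\phi_g^0$, and a matching lower bound, obtained from the variational characterization of the linear ground-state energy together with the convergence $\phi_g^{\beta,\sigma}\to\phi_g^0$ as $\beta\to0^+$. Everything hinges on one Gaussian integral: since $|\phi_g^0|^{2\sigma+2}$ factorizes over the coordinates, a direct computation gives
\[
A:=\int_{\RR^d}|\phi_g^0(\bx)|^{2\sigma+2}\,d\bx=\frac{1}{(\sigma+1)^{d/2}}\left(\frac{\gamma}{\pi}\right)^{d\sigma/2}.
\]
Writing $E_{\mathrm{lin}}(\phi):=\int_{\RR^d}\big[\tfrac12|\nabla\phi|^2+V(\bx)|\phi|^2\big]\,d\bx$, $\lambda_0:=d\gamma/2$ for the linear ground-state energy, and $C:=\frac{1}{(\sigma+1)^{(d+2)/2}}(\gamma/\pi)^{d\sigma/2}$, one checks $\frac{1}{\sigma+1}A=C$, so $C$ is precisely the coefficient of $\beta$ in \eqref{nlsehar1}.

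For the upper bound, $\phi_g^0\in S$ is an admissible competitor, whence
\[
E_g(\beta,\sigma)\le E(\phi_g^0)=\lambda_0+\frac{\beta}{\sigma+1}A=\lambda_0+C\beta.
\]
For the lower bound, decompose $E(\phi)=E_{\mathrm{lin}}(\phi)+\frac{\beta}{\sigma+1}\int|\phi|^{2\sigma+2}\,d\bx$ and use $E_{\mathrm{lin}}(\phi)\ge\lambda_0$ for all $\phi\in S$ (with equality, among nonnegative normalized functions, only at $\phi_g^0$, by nondegeneracy of the harmonic-oscillator ground state), giving
\[
E_g(\beta,\sigma)\ge\lambda_0+\frac{\beta}{\sigma+1}\int_{\RR^d}|\phi_g^{\beta,\sigma}|^{2\sigma+2}\,d\bx.
\]
The crux is to prove $\int|\phi_g^{\beta,\sigma}|^{2\sigma+2}\,d\bx\to A$. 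From the upper bound and $\beta\ge0$ one gets $\lambda_0\le E_{\mathrm{lin}}(\phi_g^{\beta,\sigma})\le\lambda_0+C\beta$, so the family $\{\phi_g^{\beta,\sigma}\}$ is bounded in $X$ and $E_{\mathrm{lin}}(\phi_g^{\beta,\sigma})\to\lambda_0$. Given any sequence $\beta_n\to0^+$, extract a subsequence converging weakly in $X$ and, by Lemma \ref{lem:compact}, strongly in $L^{2}$ and $L^{2\sigma+2}$; the limit $\psi$ satisfies $\|\psi\|=1$ and, by weak lower semicontinuity of $E_{\mathrm{lin}}$, $E_{\mathrm{lin}}(\psi)\le\lambda_0$, so $\psi$ is a linear ground state and hence $\psi=\phi_g^0$. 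Since the limit is independent of the subsequence, the full family converges, which forces $\int|\phi_g^{\beta,\sigma}|^{2\sigma+2}\,d\bx\to A$ and therefore $E_g(\beta,\sigma)\ge\lambda_0+C\beta+o(\beta)$.

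Combining the two bounds gives $E_g(\beta,\sigma)=\lambda_0+C\beta+o(\beta)$, which is \eqref{nlsehar1}; the approximation \eqref{harg678} is exactly the $X$-convergence $\phi_g^{\beta,\sigma}\to\phi_g^0$ just established. Finally, inserting $E_g(\beta,\sigma)=\lambda_0+C\beta+o(\beta)$ and $\int|\phi_g^{\beta,\sigma}|^{2\sigma+2}\,d\bx=A+o(1)$ into the identity $\mu_g(\beta,\sigma)=E_g(\beta,\sigma)+\frac{\sigma\beta}{\sigma+1}\int|\phi_g^{\beta,\sigma}|^{2\sigma+2}\,d\bx$ and using $(1+\sigma)C=(\sigma+1)^{-d/2}(\gamma/\pi)^{d\sigma/2}$ yields \eqref{nlsehar2}. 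The main obstacle is this lower bound, and in particular the convergence $\int|\phi_g^{\beta,\sigma}|^{2\sigma+2}\,d\bx\to A$: it relies on the compact embedding $X\hookrightarrow L^{2\sigma+2}$ of Lemma \ref{lem:compact}, which is automatic for $d=1,2$ but restricts to $\sigma<2$ when $d=3$.
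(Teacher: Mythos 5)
Your proof is correct, but it takes a genuinely different and more rigorous route than the paper's. The paper's own proof of this lemma is purely formal: it notes that at $\beta=0$ the ground state is the Gaussian $\phi_g^0$, declares that for $0<\beta\ll1$ one may approximate $\phi_g^{\beta,\sigma}$ by $\phi_g^0$, and obtains \eqref{nlsehar1}--\eqref{nlsehar2} by substituting $\phi_g^0$ into \eqref{def:E} and \eqref{def:mu} and evaluating the Gaussian integrals (your constant $A$), with the detailed computation omitted; the rigorous justification of the approximation is deferred to the separate Theorem \ref{thm:har_limit}, which proves $H^1$ convergence by essentially the compactness machinery you invoke (energy sandwich, weak compactness in $X$, Lemma \ref{lem:compact}, identification of the limit by uniqueness of the linear ground state). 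What you do differently is to fold that convergence argument into the lemma itself and upgrade it to a two-sided bound on the energy: the test-function upper bound $E_g\le\lambda_0+C\beta$ is exact, and the lower bound $E_g\ge\lambda_0+\frac{\beta}{\sigma+1}\int|\phi_g^{\beta,\sigma}|^{2\sigma+2}$ combined with $\int|\phi_g^{\beta,\sigma}|^{2\sigma+2}\to A$ pins down the coefficient of $\beta$. This actually justifies the $o(\beta)$ remainder in \eqref{nlsehar1}--\eqref{nlsehar2}, which neither the paper's proof of the lemma nor its Theorem \ref{thm:har_limit} (whose estimate \eqref{proof:har_0} only gives $E_g=E_0(\phi_g^0)+O(\beta)$) establishes. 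The price is the restriction you honestly flag: compactness of $X\hookrightarrow L^{2\sigma+2}$ requires $\sigma<2$ when $d=3$, whereas the paper's formal statement carries no such caveat. One minor point: to identify the weak limit $\psi$ with $\phi_g^0$ you should fix the phase by taking $\phi_g^{\beta,\sigma}\ge0$ throughout, as the paper implicitly does via Theorem \ref{thm:mres}.
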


\begin{proof}
When $\beta=0$, all eigenfunctions of \eqref{eq:eig} can be expressed via the Hermite functions, and the ground state is exactly $\phi_g^0$ in \eqref{harg678}. When $0<\beta\ll1$, we can approximate the ground state $\phi_g^{\beta,\sigma}$ by $\phi_g^0$. 
Plugging (\ref{harg678}) into (\ref{def:E}) and (\ref{def:mu}) with $V(\bx)=\frac{\gm^2|\mathbf{x}|^2}{2}$, we get \eqref{nlsehar1} and \eqref{nlsehar2}, respectively.
The detailed computation is omitted here for brevity.
\end{proof}
We can further prove rigorously the convergence of the approximate ground state for this case and the result is formulated as the following theorem.

\begin{theorem}\label{thm:har_limit} 
Consider the NLSE \eqref{eq:eig} with $\sigma>0$ and $V(\bx)=\sum_{j=1}^d \gm_j^2x_j^2/2$, where $d=1,2,3$. %\\
 When $\beta\to 0^+$ for general $\sigma>0$ or $\beta\to 0$ for $\sigma$ satisfying $d\sigma<2$, we have $\phi_g^{\beta,\sigma}(\bx)$ converges to $\phi_g^0(\bx)$ in $H^1$, where $\phi_g^0(\bx):=\prod_{j=1}^d\left(\frac{\gm_j}{\pi}\right)^{\frac{1}{4}}e^{-\frac{\gm_j{x}_j^2}{2}}$.
\end{theorem}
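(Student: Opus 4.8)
The plan is to regard the family $\{\phi_g^{\beta,\sigma}\}$ as an almost minimizing sequence for the linear ($\beta=0$) energy and to run a compactness argument in the space $X$. Write $E_0(\phi)=\frac12\|\nabla\phi\|^2+\int_{\mathbb R^d}V(\bx)|\phi(\bx)|^2\,d\bx$ for the $\beta=0$ energy, so that $E(\phi)=E_0(\phi)+\frac{\beta}{\sigma+1}\|\phi\|_{2\sigma+2}^{2\sigma+2}$. Since $V$ is harmonic, $\phi_g^0$ is the unique nonnegative, $L^2$-normalized minimizer of $E_0$, with value $E_0^{\min}:=E_0(\phi_g^0)=\sum_{j=1}^d \gm_j/2$, and $E_0(\phi)\ge E_0^{\min}$ for every $\phi\in S$. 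The goal is to first show $E_0(\phi_g^{\beta,\sigma})\to E_0^{\min}$ and then to upgrade this to $H^1$ convergence of the minimizers.

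First I would establish a uniform bound for $\{\phi_g^{\beta,\sigma}\}$ in $X$ for $\beta$ near $0$. Testing the minimality of $\phi_g^{\beta,\sigma}$ against $\phi_g^0$ gives $E(\phi_g^{\beta,\sigma})\le E(\phi_g^0)=E_0^{\min}+\frac{\beta}{\sigma+1}\|\phi_g^0\|_{2\sigma+2}^{2\sigma+2}$, bounded uniformly for $|\beta|$ small. When $\beta\to0^+$ all three terms of $E$ are nonnegative, so $\|\nabla\phi_g^{\beta,\sigma}\|$ and $\int V|\phi_g^{\beta,\sigma}|^2$ are bounded at once; when $\beta\to0^-$ with $d\sigma<2$, I would absorb the negative nonlinear term using \eqref{G-N} (valid since $d\sigma<2$ forces $\sigma\le2$) together with Young's inequality, exactly as in case (i) of Theorem \ref{thm:mres}, to obtain $E(\phi_g^{\beta,\sigma})\ge\frac14\|\nabla\phi_g^{\beta,\sigma}\|^2+\int V|\phi_g^{\beta,\sigma}|^2-C$ uniformly, which again controls $\|\phi_g^{\beta,\sigma}\|_X$. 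The key estimate then comes from comparing the two energies: subtracting and using $E_0(\phi_g^{\beta,\sigma})\ge E_0^{\min}$ yields
\[
0\le E_0(\phi_g^{\beta,\sigma})-E_0^{\min}\le \frac{\beta}{\sigma+1}\left(\|\phi_g^0\|_{2\sigma+2}^{2\sigma+2}-\|\phi_g^{\beta,\sigma}\|_{2\sigma+2}^{2\sigma+2}\right).
\]
For $\beta\to0^+$ the right-hand side is at most $\frac{\beta}{\sigma+1}\|\phi_g^0\|_{2\sigma+2}^{2\sigma+2}\to0$; for $\beta\to0^-$ it is at most $\frac{|\beta|}{\sigma+1}\|\phi_g^{\beta,\sigma}\|_{2\sigma+2}^{2\sigma+2}$, and the Gagliardo--Nirenberg bound with the uniform control of $\|\nabla\phi_g^{\beta,\sigma}\|$ keeps $\|\phi_g^{\beta,\sigma}\|_{2\sigma+2}^{2\sigma+2}$ bounded, so this also tends to $0$. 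In either regime $E_0(\phi_g^{\beta,\sigma})\to E_0^{\min}$.

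Finally I would identify the limit and strengthen the convergence. Along any sequence $\beta_n\to0$ the uniform $X$-bound gives a subsequence with $\phi_g^{\beta_n,\sigma}\rightharpoonup\phi^*$ weakly in $X$; Lemma \ref{lem:compact} with $p=2$ gives strong $L^2$ convergence, hence $\|\phi^*\|=1$ and $\phi^*\ge0$. Weak lower semicontinuity of $\|\nabla\cdot\|^2$ and, since $V\ge0$, Fatou's lemma for $\int V|\cdot|^2$ give $E_0(\phi^*)\le\liminf E_0(\phi_g^{\beta_n,\sigma})=E_0^{\min}$, so $\phi^*$ is a nonnegative normalized minimizer of $E_0$ and therefore $\phi^*=\phi_g^0$ by uniqueness. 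To get $H^1$ convergence I note that $E_0(\phi_g^{\beta_n,\sigma})\to E_0(\phi^*)$ while the two nonnegative pieces $\tfrac12\|\nabla\cdot\|^2$ and $\int V|\cdot|^2$ are each weakly lower semicontinuous; since they sum to a convergent total, each converges separately, so $\|\nabla\phi_g^{\beta_n,\sigma}\|\to\|\nabla\phi^*\|$. Weak convergence of the gradients in $L^2$ plus convergence of their norms gives strong $L^2$ convergence of the gradients, and with the strong $L^2$ convergence of the functions this yields $\phi_g^{\beta_n,\sigma}\to\phi_g^0$ in $H^1$. As the limit is independent of the subsequence, the whole family converges.

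The hard part will be the uniform energy-space bound in the regime $\beta\to0^-$: there the nonlinear term has the wrong sign, and one must simultaneously absorb it into the kinetic energy by Gagliardo--Nirenberg and Young uniformly for small $|\beta|$, and then use the resulting bound to show $\|\phi_g^{\beta,\sigma}\|_{2\sigma+2}^{2\sigma+2}$ stays bounded so that its contribution to $E_0$ vanishes. Once these are in place, the remaining steps — compactness, identification via uniqueness of the linear ground state, and the norm-convergence upgrade from weak to strong $H^1$ — are routine.
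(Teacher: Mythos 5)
Your proposal is correct and follows essentially the same route as the paper's proof: compare $E$ against $E_0$ to obtain energy convergence and a uniform bound in $X$ (absorbing the negative nonlinear term via Gagliardo--Nirenberg when $\beta<0$ and $d\sigma<2$), extract a weak limit, upgrade to strong $L^2$ convergence by compactness, identify the limit through uniqueness of the linear ground state, and recover strong $H^1$ convergence from the norm convergence of the gradients. The only cosmetic difference is that you invoke Lemma \ref{lem:compact} directly where the paper re-derives the tightness estimate from the confining potential, and you rearrange the comparison inequality slightly differently from the paper's chain of bounds.
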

\begin{proof}
 We start the proof by showing the case $\beta\to0^+$ for general $\sigma>0$. Define
 \begin{equation}\label{def:E0}
E_0(\phi)=\int_{\Omega}\left[\frac{1}{2}|\nabla\phi(\bx)|^2+V(\mathbf{x})|\phi(\bx)|^2\right]d\mathbf{x}.
\end{equation}
  The minimizer of $E_0(\cdot)$ exists and is unique by the strict convexity of $E_0(\cdot)$ \cite{Bao2013}. For $0<\beta\ll1$, a simple computation shows that
\be\label{proof:har_0}
E_0(\phi_g^0)\le E_0(\phi_g^{\beta,\sigma})\le E(\beta,\sigma)\le E(\phi_g^0) \le E_0(\phi_g^0) +O(|\beta|).
\ee
Therefore, $\lim_{\beta\to0^+}E(\beta,\sigma)=E_0(\phi_g^0)$, and $\|\nabla\phi_g^{\beta,\sigma}\|$, $\|\phi_g^{\beta,\sigma}\|_{L_V}$ is uniformly bounded above, i.e. $\|\nabla\phi_g^{\beta,\sigma}\|\le C$ and $\|\phi_g^{\beta,\sigma}\|_{L_V}\le C$ for some constant $C$. 

The boundedness of the $H_1$ norm implies that there exists $\tphi\in H_1$ such that $\phi_g^{\beta,\sigma}\to\tphi$ weakly in $H_1$. 
We claim that $\phi_g^{\beta,\sigma}\to\tphi$ strongly in $L_2$. %This can be shown in the following way. 
For any $\eta>0$, the confinement of the external potential implies that there exists $R$ such that when $|\bx|>R$, $V(\bx)>\frac{C}{\eta}$. As a result, 
\be
\int_{|\bx|>R}|\phi_g^{\beta,\sigma}|^2\,d\bx\le\frac{\eta}{C}\int_{|\bx|>R} V(\bx)|\phi_g^{\beta,\sigma}|^2\,d\bx\le\eta.
\ee
While in the bounded domain $\{ \bx\,|\, |\bx|\le R\}$, the Sobolev embedding theorem implies that $\phi_g^{\beta,\sigma}\to\tphi$ strongly in $L_2$. It follows that 
\be
\limsup_{\beta\to0^+}\int_{\mathbb{R}^d} |\phi_g^{\beta,\sigma}-\tphi|^2\,d\bx\le 4\eta.
\ee
Since $\eta$ is arbitrary, we proved our claim. Consequently, $\|\tphi\|=1$ and $\tphi\in S$.

Further, we claim that $\tphi=\phi_g^0$. 
 In fact, from the lower-semicontinuity of the $H_1$ norm and the Fatou's lemma,  we have  
\be
E_0(\tphi)\le\liminf_{\beta\to0^+}E(\beta,\sigma)=E_0(\phi_g^0).
\ee
Therefore, $\tphi$ is a ground state  
and we must have $\tphi=\phi_g^0$ by the uniqueness of the ground state. What's more, we have 
$E_0(\tphi)=\lim_{\beta\to0^+}E(\beta,\sigma)$. As a consequence, 
$\|\nabla\phi_g^0\|=\lim_{\beta\to0^+}\|\nabla \phi_g^{\beta,\sigma}\|$. Combining the equation and the weak convergence of $\phi_g^{\beta,\sigma}$, we will get $ \phi_g^{\beta,\sigma}\to\phi_g^0$ strongly in $H_1$.

For the case $\beta\to 0$ with $d\sigma<2$, the  Gagliardo-Nirenberg inequality implies that $\|\phi_g^{\beta,\sigma}\|_{2\sigma+2}^{2\sigma+2}\le\|\nabla\phi_g^{\beta,\sigma}\|+C_1$. Therefore
for the lower bound part of \eqref{proof:har_0}, 
\be
E(\beta,\sigma)\ge(1-O(|\beta|))E_0(\phi_g^{\beta,\sigma})-O(|\beta|)\ge(1-O(|\beta|))E_0(\phi_g^0)-O(|\beta|),
\ee
and thus we still have $\lim_{\beta\to0^+}E(\beta,\sigma)=E_0(\phi_g^0)$ and the uniform boundedness of $\|\nabla\phi_g^{\beta,\sigma}\|$ and $\|\phi_g^{\beta,\sigma}\|_{L_V}$. The remained part of the proof  is essentially the same and is omitted here for brevity.
\end{proof}

When $\beta\gg1$,  we have the following lemma about the approximation of the ground state.
\begin{lemma}\label{lemma:strong_nlse}
When $\beta\gg1$, i.e. the strongly repulsive interaction regime,
the ground state can be approximated as
\begin{equation}\label{sol:har_strong_gGPE}
\phi_g^{\beta,\sigma}(\mathbf{x})\approx\phi_g^{\rm TF}(\mathbf{x})=\left\{\ba{ll}
\left(\frac{\mu_g^{\rm TF}-\gm^2|\mathbf{x}|^2/2}{\beta}\right)^{\frac{1}{2\sigma}},
&\gm^2|\mathbf{x}|^2\le 2\mu_g^{\rm TF},\\
0, &{\rm otherwise},\\
\ea\right.
\end{equation}
and the corresponding energy and chemical potential can be approximated as
\begin{align}\label{nlsechm834}
&\mu_g(\beta,\sigma)\approx\mu_g^{\rm TF}=\left(\frac{\beta^{\frac{1}{\sigma}}\gm^d}{2^{\frac{d}{2}-1}
dC_dB(\frac{d}{2},1+\frac{1}{\sigma})}\right)^{\frac{1}{\frac{d}{2}+\frac{1}{\sigma}}},\\
\label{nlseeng834}
&E_g(\beta,\sigma)\approx E_g^{\rm TF}=\frac{2+d\sigma}{2\sigma+2+d\sigma}\mu_g^{\rm TF}, \qquad \beta\gg1.
\end{align}
where $B(a,b)$ is the standard beta function and $C_d=2$ when $d=1$, $C_d=\pi$ when $d=2$ and $C_d=4\pi/3$ when $d=3$.
\end{lemma}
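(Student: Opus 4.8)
The plan is to perform the Thomas--Fermi (TF) reduction. In the strongly repulsive regime the kinetic energy $\frac12\int_{\mathbb R^d}|\nabla\phi|^2\,d\bx$ is subdominant relative to the potential and interaction terms, so I would discard it from \eqref{def:E} and minimize the resulting TF functional
\[
E^{\rm TF}(\phi)=\int_{\mathbb R^d}\Big[V(\bx)|\phi|^2+\frac{\beta}{\sigma+1}|\phi|^{2\sigma+2}\Big]\,d\bx
\]
over $\phi\in S$. Introducing a Lagrange multiplier $\mu$ for $\|\phi\|_2^2=1$ and varying with respect to the density $|\phi|^2$ gives the pointwise relation $V(\bx)+\beta|\phi|^{2\sigma}=\mu$ wherever $\phi\neq0$. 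Solving for $|\phi|$ with $V(\bx)=\gamma^2|\bx|^2/2$ and setting $\phi=0$ elsewhere reproduces the capped profile \eqref{sol:har_strong_gGPE}, the multiplier being $\mu_g^{\rm TF}$ and the support being $\gamma^2|\bx|^2\le 2\mu_g^{\rm TF}$. Since $E^{\rm TF}$ is convex in the density, this critical point is the unique TF minimizer.

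To fix $\mu_g^{\rm TF}$ I would impose \eqref{norm}. Passing to radial coordinates, the solid-angle integration contributes the factor $dC_d$ (with $C_d$ the volume of the unit ball, so that $\int_{|\bx|\le R}f(|\bx|)\,d\bx=dC_d\int_0^R f(r)r^{d-1}\,dr$), and the substitution $t=\gamma^2 r^2/(2\mu_g^{\rm TF})$ turns the radial integral into $\tfrac12 B(d/2,1+1/\sigma)$ times a power of $\mu_g^{\rm TF}$. The constraint $\int_{\mathbb R^d}|\phi_g^{\rm TF}|^2\,d\bx=1$ then becomes an algebraic equation in $\mu_g^{\rm TF}$ whose solution is exactly \eqref{nlsechm834}.

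For the energy--chemical-potential relation \eqref{nlseeng834} I would avoid recomputing the individual integrals and instead use two identities. Testing the TF equation against $|\phi_g^{\rm TF}|^2$ and integrating gives $P+\beta I=\mu_g^{\rm TF}$, where $P=\int_{\mathbb R^d}V|\phi_g^{\rm TF}|^2\,d\bx$ and $I=\int_{\mathbb R^d}|\phi_g^{\rm TF}|^{2\sigma+2}\,d\bx$. A virial identity supplies the second relation: applying the mass-preserving dilation $\phi_\lambda(\bx)=\lambda^{d/2}\phi_g^{\rm TF}(\lambda\bx)$, under which $P$ scales like $\lambda^{-2}$ and $I$ like $\lambda^{d\sigma}$, and using $\frac{d}{d\lambda}E^{\rm TF}(\phi_\lambda)\big|_{\lambda=1}=0$ yields $2P=\frac{d\sigma\beta}{\sigma+1}I$. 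Solving this $2\times2$ linear system for $P$ and $\beta I$ in terms of $\mu_g^{\rm TF}$ and inserting the result into $E^{\rm TF}=P+\frac{\beta}{\sigma+1}I$ gives the stated factor $\frac{2+d\sigma}{2\sigma+2+d\sigma}$.

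The one genuinely delicate point — and the main obstacle — is justifying the omission of the kinetic energy, since the formulas are asymptotic (hence the $\approx$) and exact only at leading order as $\beta\to\infty$. Because the kinetic term is nonnegative, the lower bound comes for free: $E_g(\beta,\sigma)=E(\phi_g^{\beta,\sigma})\ge E^{\rm TF}(\phi_g^{\beta,\sigma})\ge\min_S E^{\rm TF}=E_g^{\rm TF}$. For the matching upper bound I would use a smoothed $\phi_g^{\rm TF}$ as a trial function in $E$ and show that its kinetic energy is of lower order than $\mu_g^{\rm TF}$; a boundary layer is needed here because $\phi_g^{\rm TF}$ behaves like a power of the distance to $\partial\{\gamma^2|\bx|^2\le2\mu_g^{\rm TF}\}$ near the edge, so that for $\sigma\ge1$ the bare profile has infinite kinetic energy. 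Once both bounds are seen to share the same leading $\mu_g^{\rm TF}$ asymptotics, the Beta-function evaluation and the linear-algebra step are routine.
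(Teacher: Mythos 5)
Your proposal is correct and follows essentially the same Thomas--Fermi route as the paper: discard the kinetic term, solve the resulting pointwise relation $V(\bx)+\beta|\phi|^{2\sigma}=\mu$ for the capped profile \eqref{sol:har_strong_gGPE}, and fix $\mu_g^{\rm TF}$ from the normalization constraint via the Beta-function integral. Your derivation of $E_g^{\rm TF}=\frac{2+d\sigma}{2\sigma+2+d\sigma}\mu_g^{\rm TF}$ from the integrated TF equation combined with a dilation (virial) identity is a tidy shortcut for the direct integration the paper omits, and your closing remarks on rigorously controlling the discarded kinetic energy (trivial lower bound, regularized trial function for the upper bound) go beyond the paper, which, after a rescaling that exhibits the kinetic term at order $\vep^4$, simply assumes convergence to the TF minimizer.
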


\begin{proof}
Set $\phi_g^\vep(\bx)=\vep^{-d/2}\phi_g^{\beta,\sigma}(\bx/\vep)$ with $\vep=\beta^{-\frac{1}{2+d\sigma}}$ and define
\be\label{def:E_vep}
E^{\vep}(\phi)=\int_{\Bbb R^d}\left(\frac{\vep^4}{2}|\nabla\phi|^2+V(\bx)|\phi|^2+\frac{1}{\sigma+1}|\phi|^{2\sigma+2}\right)\,d\bx.
\ee
It is easy to check that $\vep\to0^+$ as $\beta\to+\infty$ and $\phi_g^\vep$ minimizes $E^\vep(\cdot)$ iff $\phi_g^{\beta,\sigma}$ minimizes $E(\beta,\sigma)$. 
From \eqref{def:E_vep}, it is natural to assume that the ground state $\phi_g^\vep$ converges to the ground state of the following energy as $\vep\to0$,
\be\label{eq:es}
E_\infty(\phi)=\int_{\Bbb R^d}\left(V(\bx)|\phi|^2+\frac{1}{\sigma+1}|\phi|^{2\sigma+2}\right)\,d\bx,
\ee
which drops the kinetic energy part. The minimizer of \eqref{eq:es} is usually called the Thomas-Fermi (TF) approximation in the literature. 
The Euler-Lagrange equation of \eqref{eq:es} implies the TF approximation satisfies the following equation 
\begin{equation}\label{eq:eigTF}
\frac{\gm^2|\bx|^2}{2}\phi^{\rm TF}_g(\bx)+\beta|\phi_g^{\rm TF}(\bx)|^{2\sigma}\phi_g^{\rm TF}(\bx)=
\mu_g^{\rm TF}\phi_g^{\rm TF}(\mathbf{x}),\qquad \mathbf{x}\in \mathbb{R}^d.
\end{equation}
Solving the above equation, we get (\ref{sol:har_strong_gGPE}).
Substituting (\ref{sol:har_strong_gGPE}) into \eqref{norm} and \eqref{def:mu},
 we get \eqref{nlsechm834} and \eqref{nlseeng834}, respectively.  
 The detailed computation is omitted here for brevity.
 \end{proof}

Now we consider the case $\beta\to-\infty$ when $d\sigma<2$.  
In this case, there will be a strong attractive interaction between particles. 
Set $\phi_g^\vep(\bx)=\vep^{d/2}\phi_g^{\beta,\sigma}(\vep\bx)$ with $\vep=|\beta|^{-\frac{1}{2-d\sigma}}$ and define
\be\label{def:E_vep-}
E_-^{\vep}(\phi)=\int_{\Bbb R^d}\left(\frac{1}{2}|\nabla\phi|^2+\vep^4V(\bx)|\phi|^2-\frac{1}{\sigma+1}|\phi|^{2\sigma+2}\right)\,d\bx.
\ee
Easy to see $\vep\to0^+$ as $\beta\to-\infty$. 
Again $\phi_g^\vep$ minimizes $E_-^\vep(\cdot)$ iff $\phi_g^{\beta,\sigma}$ minimizes $E(\beta,\sigma)$. 
From \eqref{def:E_vep-}, it is natural to assume that ground state $\phi_g^\vep$ converges to  the state $\phi_{-}$ which minimizes the following energy
\be\label{eq:es_-infty}
E_r(\phi)=\int_{\Bbb R^d}\left(\frac{1}{2}|\nabla\phi|^2-\frac{1}{\sigma+1}|\phi|^{2\sigma+2}\right)\,d\bx.
\ee
This implies that we can approximate the ground state $\phi_g^{\beta,\sigma}$ by the ground state of the following nonlinear eigenvalue problem when $\beta<0$ and $|\beta|\gg1$,
\be\label{eq:neg_infty}
\left[-\frac{1}{2}\Delta+\beta|\phi(\bx)|^{2\sigma}\right]\phi(\bx)=
\mu\phi(\mathbf{x}),\qquad \mathbf{x}\in\Omega\subseteq \mathbb{R}^d.
\ee
Notice that there is no external potential term in the equation \eqref{eq:neg_infty}, the approximation does not depend on the external potential we choose.

Here we provide accuracy tests for the asymptotic results shown in Lemma \ref{lemma:weak_nlse} and Lemma \ref{lemma:strong_nlse}. From Fig. \ref{fig:ghar_E_beta}, we can see that our approximations agree with the exact values very well in both weak and strong interaction regimes.
\begin{figure}[htbp]
\centerline{\psfig{figure=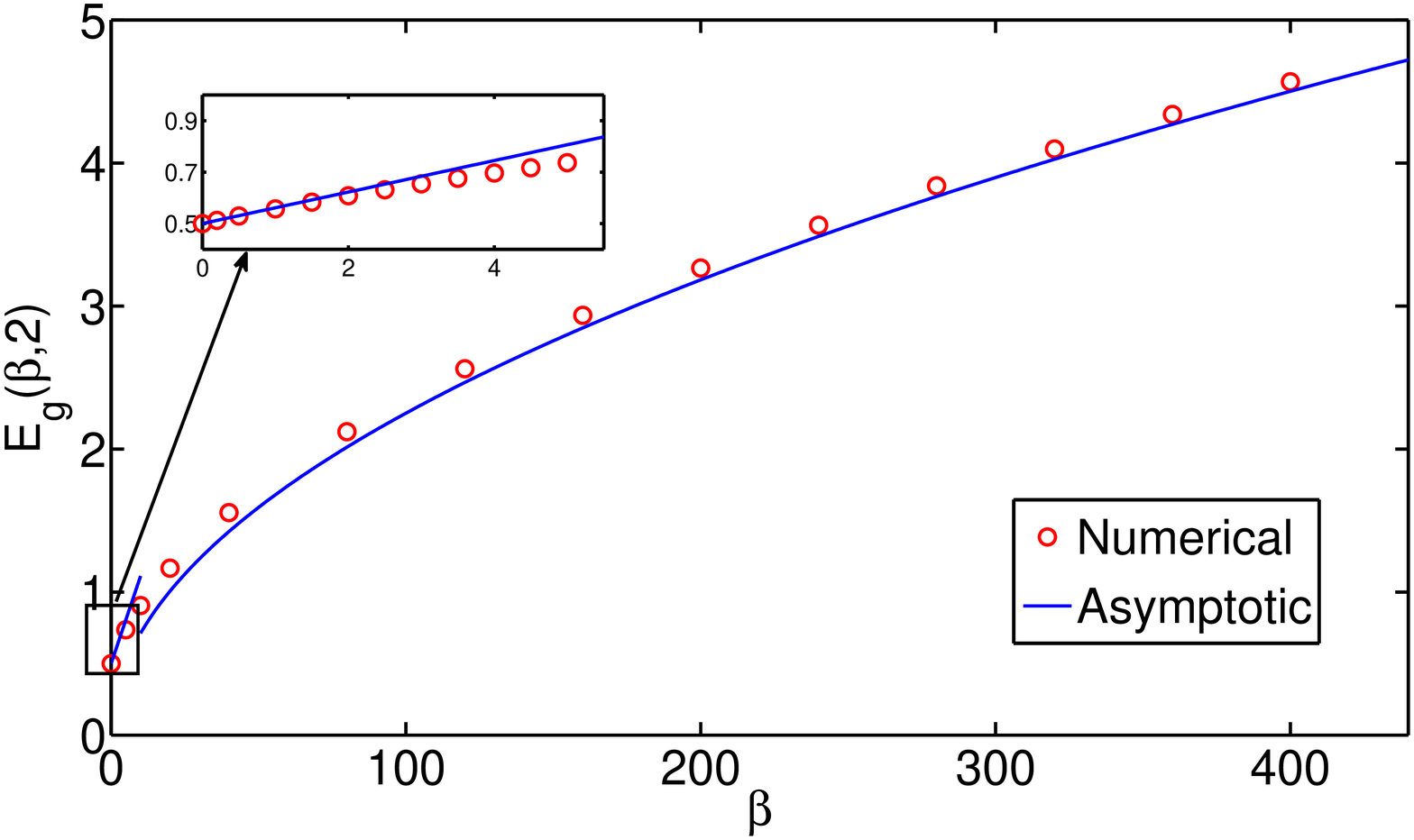,height=5cm,width=13cm,angle=0}}
\caption{The ground state energy of
the NLSE (\ref{eq:eig}) in 1D under a harmonic potential with $\sigma=2$ (quintic nonlinearity for TG gas) and $\gm=3$ for different $\beta$. }
\label{fig:ghar_E_beta}
\end{figure}

\subsection{When $\sigma\to\infty$ under a fixed $\beta>0$}
In this section, we fix $\beta>0$ and study the limit of the ground state
$\phi_g^{\beta,\sigma}$ as $\sigma\to\infty$.
For simplicity, we will only consider the NLSE \eqref{eq:eig} in
1D under the harmonic potential $V(x)=\frac{\gm^2 x^2}{2}$ for some $\gm>0$.

\begin{lemma}\label{gGPE_har_s}
For any given $\beta>0$, when $\sigma\to\infty$, we have

(i) If $0<\gm\le\pi$, the ground state converges to the linear approximation
\begin{align}\label{bnlse257}
&\phi_g^{\beta,\sigma}(x)\approx\phi_g^{0}(x)=
\left(\frac{\gm}{\pi}\right)^{\frac{1}{4}}e^{-\frac{\gm{x}^2}{2}}, \qquad x\in{\mathbb R},\\
&E_g(\beta,\sigma)\approx \frac{\gm}{2}+\frac{\beta}{(\sigma+1)^{\frac{3}{2}}}\left(\frac{\gm}{\pi}\right)^{\frac{\sigma}{2}}
\to\frac{\gm}{2},\quad
\mu_g(\beta,\sigma)\approx \frac{\gm}{2}+\frac{\beta}{(\sigma+1)^{\frac{1}{2}}}
\left(\frac{\gm}{\pi}\right)^{\frac{\sigma}{2}}\to \frac{\gm}{2}.
\end{align}

(ii) If $\gm>\pi$, the ground state converges to
\be\label{harnlse367}
\phi_g^{\beta,\sigma}(x)\to\psi_g^\gm(x)=
\begin{cases}
\varphi\left(-x\right), &x<-x_\gm,\\
1, &-x_\gm \le{x}\le x_\gm,\\
\varphi\left(x\right), &x>x_\gm,
\end{cases}
\end{equation}
where $\varphi(x)$ is the unique positive ground state of
the following linear eigenvalue problem with $\mu$ the corresponding eigenvalue
\begin{equation}\label{sol:gGPE_har_eq}
\begin{cases}
\mu\varphi(x)=-\frac{1}{2}\varphi^{\prime\prime}(x)+\frac{\gm^2x^2}{2}\varphi(s), \qquad x>x_\gm,\\
\varphi(x_\gm)=1, \qquad \varphi^{\prime}(x_\gm)=0, \qquad \lim\limits_{x\to+\infty}\varphi(x)=0,
\end{cases}
\end{equation}
with the constant $x_\gm\ge0$ determined by
\be \label{nlsehar689}
x_\gm+\int_{x_\gm}^\infty |\varphi(x)|^2dx=\frac{1}{2}.
\ee
\end{lemma}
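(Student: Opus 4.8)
The plan is to treat the two regimes separately, exploiting the elementary observation that the linear ground state has peak value $\phi_g^0(0)=(\gamma/\pi)^{1/4}$, which equals $1$ precisely at $\gamma=\pi$; hence the Gaussian stays below $1$ everywhere iff $\gamma\le\pi$. The mechanism behind the bifurcation is that the scaled nonlinearity $\frac{\beta}{\sigma+1}|\phi|^{2\sigma+2}$ tends to $0$ as $\sigma\to\infty$ wherever $|\phi|\le1$, but blows up exponentially wherever $|\phi|>1$. Thus for $\gamma\le\pi$ the interaction becomes negligible and the linear ground state survives, whereas for $\gamma>\pi$ the limit must respect the pointwise constraint $|\phi|\le1$.

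For case (i) I would follow the template of Theorem \ref{thm:har_limit}. Using $\phi_g^0$ as a trial function and evaluating the Gaussian integral gives $E_g(\beta,\sigma)\le\frac{\gamma}{2}+\frac{\beta}{(\sigma+1)^{3/2}}(\gamma/\pi)^{\sigma/2}$, and since $\gamma\le\pi$ forces $(\gamma/\pi)^{\sigma/2}\le1$ the correction tends to $0$. The matching lower bound $E_g(\beta,\sigma)\ge E_0(\phi_g^{\beta,\sigma})\ge E_0(\phi_g^0)=\frac{\gamma}{2}$ follows by discarding the nonnegative interaction term and using that $\phi_g^0$ minimizes the linear energy $E_0$. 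Hence $E_g(\beta,\sigma)\to\gamma/2$; the energy bound furnishes a uniform $H^1$ bound, and the compactness of Lemma \ref{lem:compact} with the uniqueness of the linear ground state identifies the $H^1$ limit as $\phi_g^0$, exactly as in Theorem \ref{thm:har_limit}. The stated $\mu_g$ asymptotics come from inserting the same Gaussian computation into \eqref{def:mu}.

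Case (ii) is the substantive part, and I would first analyze the limiting constrained problem of minimizing $E_0(\phi)=\int_{\mathbb R}\left[\frac12|\phi'|^2+\frac{\gamma^2x^2}{2}|\phi|^2\right]dx$ over $\{0\le\phi\le1,\ \|\phi\|_2=1\}$. Existence follows from the direct method, since this set is weakly $H^1$-closed and $E_0$ is weakly lower semicontinuous and coercive by confinement. Writing the optimality conditions as a variational inequality shows the minimizer saturates $\phi\equiv1$ on the region where the unconstrained Gaussian would exceed $1$ and solves the harmonic-oscillator equation elsewhere; symmetry and monotonicity force this region to be an interval $[-x_\gamma,x_\gamma]$, the $C^1$ matching at the free boundary yields $\varphi(x_\gamma)=1$ and $\varphi'(x_\gamma)=0$, and the mass constraint yields \eqref{nlsehar689}, recovering exactly the profile $\psi_g^\gamma$ of \eqref{harnlse367}--\eqref{sol:gGPE_har_eq}. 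Uniqueness I would obtain by passing to the density $\rho=\phi^2$: the set $\{0\le\rho\le1,\ \int\rho=1\}$ is convex, the potential term is linear in $\rho$, and the kinetic term equals the convex Fisher information $\int\frac{|\rho'|^2}{8\rho}$, so the minimizing density, and hence $\psi_g^\gamma=\sqrt\rho$, is unique.

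With $\psi_g^\gamma$ in hand I would establish the convergence. Using $\psi_g^\gamma$ as a trial function and $\psi_g^\gamma\le1$ bounds the interaction by $\frac{\beta}{\sigma+1}\int|\psi_g^\gamma|^{2\sigma+2}\le\frac{\beta}{\sigma+1}\to0$, so $E_g(\beta,\sigma)$ is uniformly bounded and $\phi_g^{\beta,\sigma}$ is bounded in $H^1$; I then extract a subsequence converging weakly in $H^1$, strongly in $L^2$, and a.e. to some $\tilde\phi$ with $\|\tilde\phi\|_2=1$. The crucial step is $|\tilde\phi|\le1$ a.e.: the uniform energy bound gives $\int|\phi_g^{\beta,\sigma}|^{2\sigma+2}\le C(\sigma+1)$, which grows only linearly, whereas a.e. convergence to a value exceeding $1$ on a positive-measure set would (via Egorov) force this integral to grow exponentially in $\sigma$, a contradiction. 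Finally, weak lower semicontinuity of the gradient term and Fatou for the potential give $E_0(\tilde\phi)\le\liminf E_g(\beta,\sigma)\le\limsup E_g(\beta,\sigma)\le E_0(\psi_g^\gamma)$, so $\tilde\phi$ is admissible and minimizing for the constrained problem; uniqueness gives $\tilde\phi=\psi_g^\gamma$, and as the limit is independent of the subsequence the whole family converges. The main obstacle is precisely the free-boundary analysis of paragraph three: proving that the constrained minimizer has the explicit flat-plus-tail structure and that the pair $(x_\gamma,\varphi)$ solving \eqref{sol:gGPE_har_eq}--\eqref{nlsehar689} exists and is unique.
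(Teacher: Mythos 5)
Your proposal is correct in strategy but follows a genuinely different, and substantially more rigorous, route than the paper. The paper's own proof is a formal matched-asymptotics argument: it observes that $a^{2\sigma}\to 0$ or $\infty$ according as $|a|<1$ or $|a|>1$, drops the nonlinear term where the profile is below $1$ and the diffusion term where it is above $1$, glues the resulting Gaussian/TF pieces for finite $\sigma$, and then passes to the limit assuming $x_c^{\sigma}\to x_\gm$; no convergence of $\phi_g^{\beta,\sigma}$ is actually proved. You instead identify the $\sigma\to\infty$ limit as the minimizer of the obstacle-constrained problem $\min\{E_0(\phi): \|\phi\|_2=1,\ 0\le\phi\le 1\}$ and prove convergence by the energy-sandwich plus compactness machinery of Theorem \ref{thm:har_limit}, with the nice additional ingredient that the linear-in-$\sigma$ bound $\int|\phi_g^{\beta,\sigma}|^{2\sigma+2}\le C(\sigma+1)$ combined with Egorov forces $|\tilde\phi|\le1$ a.e.; the free-boundary conditions $\varphi(x_\gm)=1$, $\varphi'(x_\gm)=0$ and the mass relation \eqref{nlsehar689} then emerge as the Euler--Lagrange/regularity structure of the obstacle problem rather than as matching conditions, and uniqueness via convexity of the Fisher information in $\rho=\phi^2$ upgrades subsequential to full convergence. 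What your approach buys is an actual theorem (convergence of the ground states, not just a self-consistent ansatz) and a clean unification of the two cases, since for $\gm\le\pi$ the constraint is inactive and the constrained minimizer is the Gaussian; what the paper's approach buys is brevity and the explicit finite-$\sigma$ picture leading to the asymptotic formulas for $E_g$ and $\mu_g$ in case (i), which you recover by inserting the Gaussian into \eqref{def:mu}.

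The one substantive piece you flag but do not carry out --- that the constrained minimizer has exactly the flat-core-plus-harmonic-oscillator-tail form \eqref{harnlse367} and that the system \eqref{sol:gGPE_har_eq}, \eqref{nlsehar689} has a unique solution pair $(x_\gm,\varphi)$ --- is genuinely the crux, and it is worth noting that the paper does not establish it either; it is simply asserted through the matching construction. To close it you would need the standard $C^{1,1}$ regularity and connectedness-of-the-coincidence-set arguments for the one-dimensional obstacle problem (symmetric decreasing rearrangement gives evenness and monotonicity on $[0,\infty)$, whence the coincidence set $\{\phi=1\}$ is a symmetric interval), together with a shooting or monotonicity-in-$x_\gm$ argument for solvability of \eqref{nlsehar689}. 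Modulo that, your argument is sound and strictly stronger than what is in the paper.
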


\begin{proof}
In order to find the limit of $\phi_g^{\beta,\sigma}(x)$ when $\sigma\to\infty$,
the main idea is to determine which term on the left hand side of (\ref{eq:eig})
is negligible when $\sigma>>1$. 
Note that
\be
a^{2\sigma}\to\left\{\ba{ll}
0,  &|a|<1, \\
1, &a=1,\\
+\infty, &a>1.\\
\ea\right.
\ee
In the region where $|\phi(x)|<1$, the nonlinear term can
be dropped and we get the linear approximation, whose solution is the Gaussian function.
In the region where $|\phi(x)|>1$, the diffusion term can be dropped and we
get the TF approximation. Therefore, there are two possible cases about the
limit $\phi_g^{\beta,\sigma}(x)\to\phi^{\rm{app}}(x)$ for $x\in{\mathbb R}$ when $\sigma\to+\infty$:
(i) $|\phi^{\rm{app}}(x)|\le1$ for all $x\in{\mathbb R}$, (ii)
there exists a $x_c\ge0$ such that  $|\phi^{\rm{app}}(x)|>1$ for
$x\in[-x_c,x_c]$ and $|\phi^{\rm{app}}(x)|<1$ otherwise.

(i) When $0<\gm\le\pi$, the linear approximation suggests that
$\phi^{\rm app}(x)=\left(\frac{\gm}{\pi}\right)^{\frac{1}{4}}e^{-\frac{\gm{x}^2}{2}}\le1$ for
$x\in{\mathbb R}$. Note that the requirement that $\sup\limits_{x\in{\mathbb R}}
\phi^{\rm app}(x)\le1$ implies that $0<\gm\le\pi$.
Therefore, we get the necessary and sufficient condition about $\gm$ for
(\ref{bnlse257}) to be true.

(ii) When $\gm>\pi$, we may expect neither the linear approximation nor the
TF approximation is valid for $x\in{\mathbb R}$. Instead,
a combination of the linear approximation and TF approximation should be used.
To be more specific, for any fixed $\sigma>0$, when
$\beta>>1$, there exists a constant $x_c^\sigma\ge0$ such that when $x\in(-\infty,x_c^\sigma)\cup
(x_c^\sigma,\infty)$, the linear approximation is used; and  when $x\in[-x_c^\sigma,x_c^\sigma]$,
the TF approximation $\phi(x)=\left(\frac{\mu_g-\gm^2x^2/2}{\beta}\right)^{\frac{1}{2\sigma}}$ which
goes to the constant 1 as $\sigma\rightarrow\infty$. Therefore, we can simply use the constant function 1
in the case. The constant $x_c^\sigma$ can be determined by the normalization condition (\ref{norm}).
Letting $\sigma\rightarrow\infty$ and assuming $x_c^\sigma\to x_\gm$,
we get (\ref{harnlse367}) when $\sigma\to\infty$.
Plugging (\ref{harnlse367}) into the normalization condition (\ref{norm}),
we obtain (\ref{nlsehar689}).
\end{proof}

\begin{figure}[htbp]
\centerline{\psfig{figure=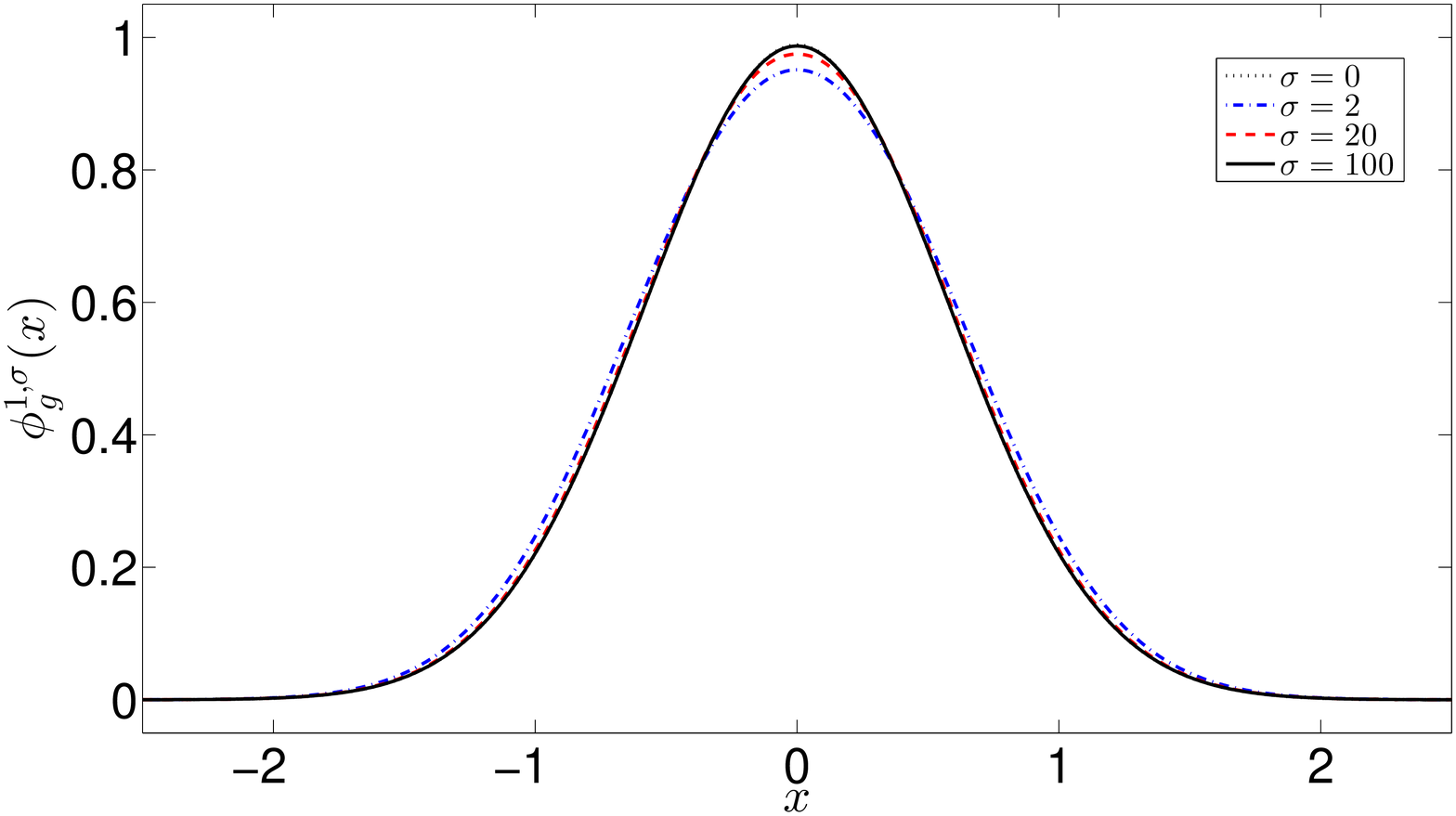,height=5cm,width=13cm,angle=0}}
\centerline{\psfig{figure=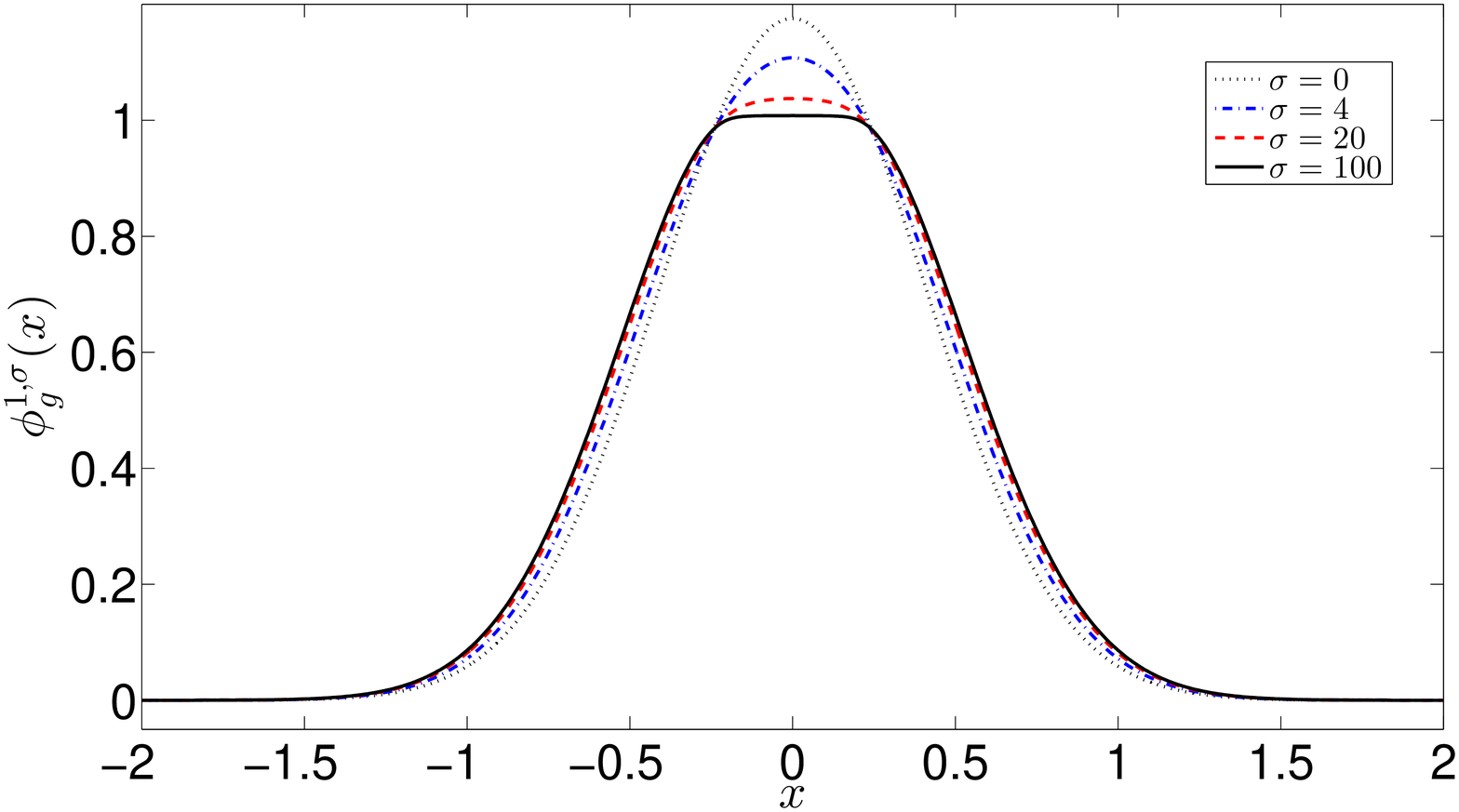,height=5cm,width=13cm,angle=0}}
\caption{Ground states of the NLSE in 1D with $\beta=1$ and $\gm=3<\pi$ (top) and
$\gm=6>\pi$ (bottom) for different nonlinearities, i.e. different values of $\sigma$.}
\label{fig:gGPE_har_gm3}
\end{figure}

\begin{figure}[htbp]
\centerline{\psfig{figure=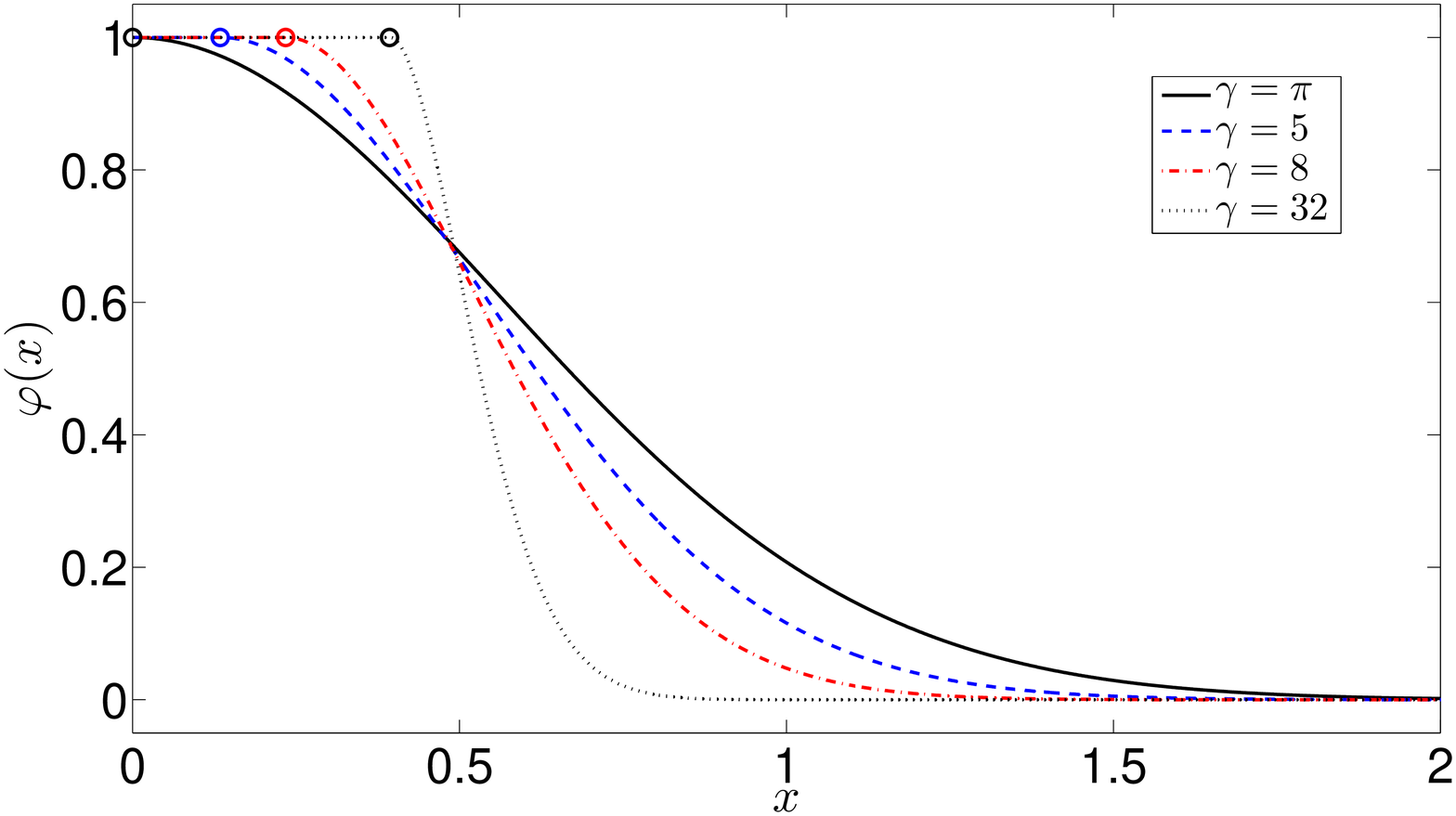,height=5cm,width=13cm,angle=0}}
\caption{Numerical solution of (\ref{sol:gGPE_har_eq}). The circles denote the points $(x_{\gm},1)$ corresponding to the different $\gm$'s.}
\label{fig:sol_varphi}
\end{figure}

\begin{figure}[htbp]
\centerline{\psfig{figure=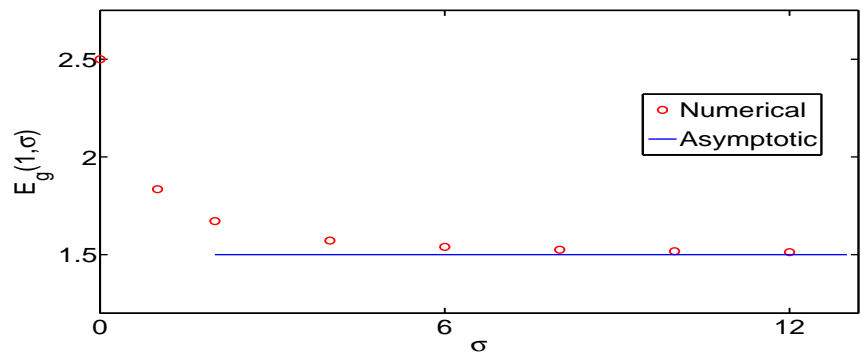,height=5cm,width=13cm,angle=0}}
\caption{The ground state energy of
the NLSE (\ref{eq:eig}) in 1D under a harmonic potential with $\beta=1$ and $\gm=3$ for different $\sigma$. }
\label{fig:ghar_E}
\end{figure}

\begin{figure}[!]
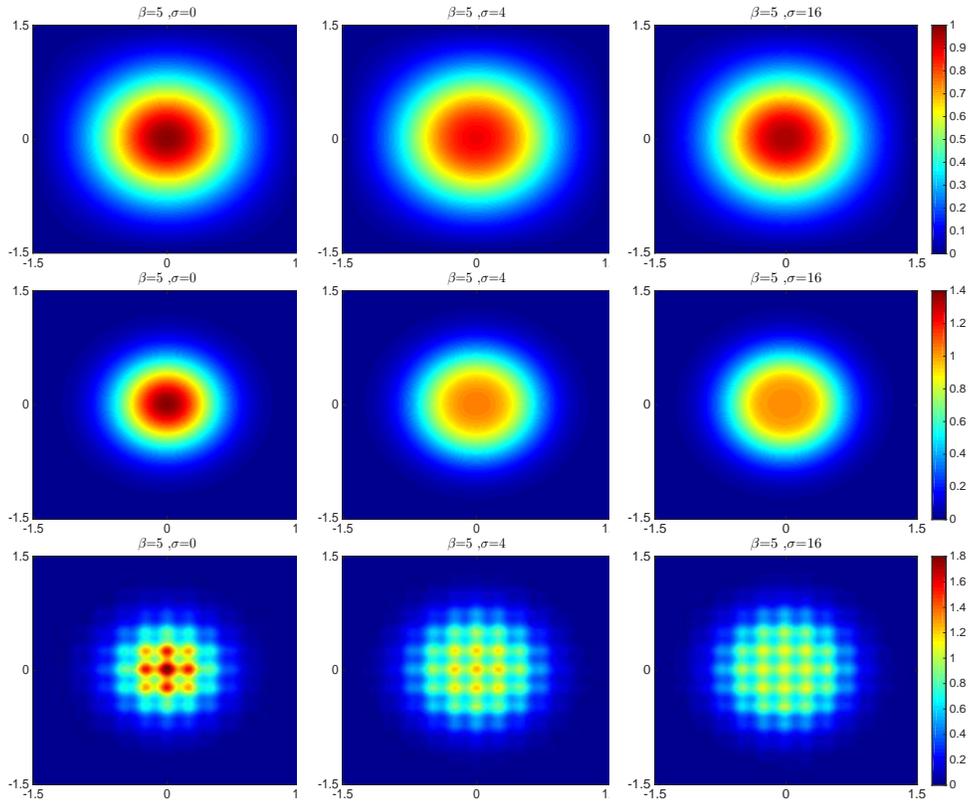

\centerline{\psfig{figure=FD2d-Bet-5-Sigma-0-har_3-gtol-1e-05.eps,height=3.5cm,width=4cm,angle=0}
\psfig{figure=FD2d-Bet-5-Sigma-4-har_3-gtol-1e-05.eps,height=3.5cm,width=4cm,angle=0}
\psfig{figure=FD2d-Bet-5-Sigma-16-har_3-gtol-1e-05.eps,height=3.5cm,width=4.5cm,angle=0}}
\centerline{\psfig{figure=FD2d-Bet-5-Sigma-0-har_6-gtol-1e-05.eps,height=3.5cm,width=4cm,angle=0}
\psfig{figure=FD2d-Bet-5-Sigma-4-har_6-gtol-1e-05.eps,height=3.5cm,width=4cm,angle=0}
\psfig{figure=FD2d-Bet-5-Sigma-16-har_6-gtol-1e-05.eps,height=3.5cm,width=4.5cm,angle=0}}
\centerline{\psfig{figure=FD2d-Bet-5-Sigma-0-lattice-gtol-1e-05.eps,height=3.5cm,width=4cm,angle=0}
\psfig{figure=FD2d-Bet-5-Sigma-4-lattice-gtol-1e-05.eps,height=3.5cm,width=4cm,angle=0}
\psfig{figure=FD2d-Bet-5-Sigma-16-lattice-gtol-1e-05.eps,height=3.5cm,width=4.5cm,angle=0}}
\caption{Ground states $\phi_g^{\beta,\sigma}$ under the harmonic potential $V(x,y)=9(x^2+y^2)/2$ (top row), $V(x,y)=18(x^2+y^2)$ (second row) and the lattice potential $V(x,y)=18(x^2+y^2)+100(\sin^2(4\pi x)+\sin^2(4\pi y))$ (third row) for $\beta=5$ and $\sigma=0$ (left column), $\sigma=4$ (middle column) and $\sigma=16$ (right column).}
\label{fig:har2d}
\end{figure}

In order to check our asymptotic results in Lemma \ref{gGPE_har_s},
we solve the time-independent NLSE (\ref{eq:eig}) numerically by using the
normalized gradient flow via backward Euler finite difference discretization
\cite{Bao2013,Bao_comp1,comp_gf,Wz1}
to find the ground states and the corresponding energy.
Figure~\ref{fig:gGPE_har_gm3} plots the ground states with $\beta=1$
for different $\sigma$ and $\gm$, Figure~\ref{fig:sol_varphi} shows the numerical solution of  (\ref{sol:gGPE_har_eq}) while
Figure~\ref{fig:ghar_E} depicts the energy asymptotics with $\beta=1$ and $\gm=3$ for different $\sigma$.
One thing that needs to be pointed out is that we can speculate the solution to (\ref{sol:gGPE_har_eq}) have the following properties from Figure~\ref{fig:sol_varphi}:\\
(i) $x_{\gm}\to0$, and $\psi_g^{\gm}(x)\to\psi_g^{\pi}(x)=e^{-\frac{\pi{x}^2}{2}}$ when $\gm\to\pi$,\\
(ii) $x_{\gm}\to0.5$ and $\psi_g^{\gm}(x)\to \psi_g^{\infty}(x)=1-1_{\{|x|\ge0.5\}}$ when $\gm\to\infty$.\\
Figure \ref{fig:har2d} plots the ground states in 2D under different potentials and with different nonlinearities. 
Again we observed different limiting patterns depending on the value of $\gm$, which is similar to the 1D case. 
We call this phenomenon to be the bifurcation of the ground state. 
The ground state $\phi_g^{\beta,\sigma}$ will converge to $\phi_g^{0}(\bx)$ \eqref{harg678} as $\sigma\to\infty$ if $\max|\phi_g^{0}(\bx)|<1$. Otherwise, it will converge to a function whose peaks are flat and the peak values are close to 1.

%%%%%%%%%%%%%%%%%%%%%%%%%%%%%%%%%%%%%%%%
% Section 5
%%%%%%%%%%%%%%%%%%%%%%%%%%%%%%%%%%%%%%%%
\section{Approximations under a box potential}\label{box:NLSE}
\setcounter{equation}{0}
\setcounter{figure}{0}
In this section, we take $\Omega=\prod_{j=1}^d(0,L_j)$ with $L_j>0$ for $j=1,\ldots,d$ and 
$V(\bx)\equiv0$ for $\bx\in\Omega$
in the NLSE (\ref{eq:eig}) with the homogeneous Dirichlet BC. 
For $\sigma=0$, the NLSE (\ref{eq:eig}) collapses to the linear Schr\"{o}dinger equation. 
From now on, we assume $\sigma>0$.

\subsection{For different $\beta$ under a fixed $\sigma>0$}\label{gGPE_box}
When $0\le \beta\ll1$, we have the following approximations for the ground state and the ground state energy.

\begin{lemma}\label{lemma:NLSE_box_weak}
When $0\le \beta\ll1$, i.e. weakly repulsive interaction regime,
the ground state $\phi_g^{\beta,\sigma}$ can be approximated as
\begin{equation}\label{boxg678}
\phi_g^{\beta,\sigma}(\mathbf{x})\approx\phi_g^{0}(\mathbf{x})
=2^{\frac{d}{2}}A_0\prod_{j=1}^d\sin\left(\frac{\pi{x_j}}{L_j}\right), \qquad \bx\in\bar{\Omega},
\end{equation}
where $A_0=\frac{1}{\sqrt{\prod_{j=1}^dL_j}}$ and the corresponding energy and chemical potential can be approximated as
\begin{align}\label{nlsebox1}
E_g(\beta,\sigma)&=\frac{\pi^2}{2}\sum_{j=1}^d\frac{1}{L_j^2}+
\frac{2^{d(\sigma+1)}A_0^{2\sigma}\beta}{(\sigma+1)\pi^d}
\left[\frac{\Gamma(\sigma+\frac{3}{2})\Gamma(\frac{1}{2})}{\Gamma(\sigma+2)}\right]^d+o(\beta),\\
\label{nlsebox2}
\mu_g(\beta,\sigma)&=\frac{\pi^2}{2}\sum_{j=1}^d\frac{1}{L_j^2}+
\frac{2^{d(\sigma+1)}A_0^{2\sigma}\beta}{\pi^d}
\left[\frac{\Gamma(\sigma+\frac{3}{2})\Gamma(\frac{1}{2})}{\Gamma(\sigma+2)}\right]^d+o(\beta).
\end{align}
\end{lemma}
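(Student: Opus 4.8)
The plan is to follow the same strategy as the proof of Lemma \ref{lemma:weak_nlse}, replacing the harmonic ground state by the lowest Dirichlet mode of the box. First I would set $\beta=0$, where \eqref{eq:eig} reduces to the linear eigenvalue problem $-\tfrac12\Delta\phi=\mu\phi$ on $\Omega=\prod_{j=1}^d(0,L_j)$ with homogeneous Dirichlet boundary conditions. Separation of variables gives the eigenfunctions $\prod_{j=1}^d\sin(n_j\pi x_j/L_j)$ with eigenvalues $\tfrac{\pi^2}{2}\sum_j n_j^2/L_j^2$; the least eigenvalue corresponds to $n_1=\dots=n_d=1$, so the positive ground state is a multiple of $\prod_{j=1}^d\sin(\pi x_j/L_j)$. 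Imposing the normalization \eqref{norm} and using $\int_0^{L_j}\sin^2(\pi x_j/L_j)\,dx_j=L_j/2$ fixes the constant as $2^{d/2}A_0$ with $A_0=(\prod_j L_j)^{-1/2}$, which is exactly $\phi_g^0$ in \eqref{boxg678}. For $0<\beta\ll1$ I would then argue, as in Lemma \ref{lemma:weak_nlse}, that $\phi_g^{\beta,\sigma}$ is well approximated by $\phi_g^0$ and substitute $\phi_g^0$ into \eqref{def:E} and \eqref{def:mu}.

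The only genuine computation is the interaction integral $\int_\Omega|\phi_g^0|^{2\sigma+2}\,d\bx$. Since $\phi_g^0$ factorizes, this reduces to a product of one-dimensional integrals $\int_0^{L_j}\sin^{2\sigma+2}(\pi x_j/L_j)\,dx_j$, and the substitution $u=\pi x_j/L_j$ turns each factor into $\tfrac{L_j}{\pi}\int_0^{\pi}\sin^{2\sigma+2}u\,du=\tfrac{L_j}{\pi}B(\sigma+\tfrac32,\tfrac12)=\tfrac{L_j}{\pi}\,\Gamma(\sigma+\tfrac32)\Gamma(\tfrac12)/\Gamma(\sigma+2)$. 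Multiplying the $d$ factors together with the prefactor $(2^{d/2}A_0)^{2\sigma+2}$ and using $A_0^{2}=1/\prod_jL_j$ to cancel the $\prod_jL_j$, I obtain $\int_\Omega|\phi_g^0|^{2\sigma+2}\,d\bx=2^{d(\sigma+1)}A_0^{2\sigma}\pi^{-d}[\Gamma(\sigma+\tfrac32)\Gamma(\tfrac12)/\Gamma(\sigma+2)]^d$. The kinetic term of \eqref{def:E} equals the lowest eigenvalue $\tfrac{\pi^2}{2}\sum_j L_j^{-2}$ and $V\equiv0$, so weighting the interaction integral by $\beta/(\sigma+1)$ yields \eqref{nlsebox1}. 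For the chemical potential I would use \eqref{def:mu}: the two interaction contributions combine as $\tfrac{\beta}{\sigma+1}+\tfrac{\sigma\beta}{\sigma+1}=\beta$, which removes the factor $(\sigma+1)^{-1}$ and gives \eqref{nlsebox2}.

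The main obstacle is justifying the error term $o(\beta)$ rather than merely $O(\beta)$. I would mirror the sandwich estimate \eqref{proof:har_0}: writing $E_0$ for the kinetic functional \eqref{def:E0} (with $V\equiv0$ here), the chain $E_0(\phi_g^0)\le E_0(\phi_g^{\beta,\sigma})\le E(\beta,\sigma)\le E(\phi_g^0)$ gives the upper estimate immediately, since $\phi_g^0\in S$ makes $E(\phi_g^0)$ equal to the right-hand side of \eqref{nlsebox1} exactly. For the matching lower bound with $o(\beta)$ error one needs $\phi_g^{\beta,\sigma}\to\phi_g^0$ in $H^1$, the box analogue of Theorem \ref{thm:har_limit}; this converts $\int_\Omega|\phi_g^{\beta,\sigma}|^{2\sigma+2}$ into $\int_\Omega|\phi_g^0|^{2\sigma+2}+o(1)$, so that $\tfrac{\beta}{\sigma+1}\int|\phi_g^{\beta,\sigma}|^{2\sigma+2}$ contributes the claimed leading term up to $o(\beta)$. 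The convergence itself rests on the compact embedding of $H_0^1(\Omega)$ into $L^{2\sigma+2}(\Omega)$ on the bounded box (Rellich--Kondrachov, valid for $\sigma<2$ when $d=3$ and for all $\sigma>0$ when $d\le2$), which here plays the role that Lemma \ref{lem:compact} played in the confining case, together with the strict convexity of $E_0$ guaranteeing uniqueness of its minimizer $\phi_g^0$.
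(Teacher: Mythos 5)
Your proposal is correct and follows essentially the same route as the paper: identify $\phi_g^0$ as the normalized lowest Dirichlet mode at $\beta=0$, substitute it into \eqref{def:E} and \eqref{def:mu}, and evaluate $\int_\Omega|\phi_g^0|^{2\sigma+2}d\bx$ via the Beta function (the paper omits this computation, but your value is right). Your additional sandwich argument for the $o(\beta)$ error term, mirroring \eqref{proof:har_0} and Theorem \ref{thm:har_limit}, supplies a justification that the paper only asserts in the remark following the lemma, so it is a welcome strengthening rather than a deviation.
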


\begin{proof}
When $\beta=0$, the NLSE \eqref{eq:eig} becomes linear and the ground state can be computed as $\phi_g^{0}(\bx)$.
When $0< \beta\ll1$, we can approximate the ground state $\phi_g^{\beta,\sigma}(\bx)$ by $\phi_g^{0}(\bx)$. 
Plugging (\ref{boxg678}) into (\ref{def:E}) and \eqref{def:mu} with $V(\bx)\equiv0$, we get 
(\ref{nlsebox1}) and \eqref{nlsebox2}, respectively. 
\end{proof}

\begin{remark}
Analogous to the harmonic potential case, we can show that $\phi_g^{\beta,\sigma}\to\phi_g^0$ in $H_1$ as $\beta\to0^+$ with general $\sigma>0$ or $\beta\to0$ with $\sigma>0$ satisfying $d\sigma<2$.
\end{remark}

When $\beta\gg1$, similar to the harmonic potential case, 
we adopt  the
TF approximation for the ground state. 

\begin{lemma}\label{lemma:NLSE_box_strong}
When $\beta\gg1$, i.e. strongly repulsive interaction regime,
the ground state can be approximated as
\begin{equation}\label{sol:gGPE_TF}
\phi_g^{\beta,\sigma}(\mathbf{x})\approx\phi_g^{TF}(\mathbf{x})=\frac{1}{\sqrt{\prod_{j=1}^dL_j}}, \qquad \bx\in\Omega,
\end{equation}
and the corresponding energy and chemical potential can be approximated as
\begin{equation}\label{gGPE_E_approx}
E_g(\beta,\sigma)\approx E_g^{TF}=\frac{A_0^{2\sigma}}{\sigma+1}\beta,\quad
\mu_g(\beta,\sigma)\approx\mu_g^{TF}=A_0^{2\sigma}\beta,\quad \beta\gg1.
\end{equation}
\end{lemma}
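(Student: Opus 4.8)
My plan mirrors the Thomas--Fermi (TF) reduction used for the harmonic trap in Lemma~\ref{lemma:strong_nlse}, with the simplification that here the TF minimizer is a constant. Since $V\equiv0$ on $\Og$, discarding the kinetic term in \eqref{def:E} leaves the TF energy $E_\infty(\phi)=\frac{\bt}{\sg+1}\int_\Og|\phi|^{2\sg+2}\,d\bx$, to be minimized over $\phi$ with $\|\phi\|=1$. First I would identify the minimizer. Writing $|\Og|=\prod_{j=1}^dL_j$ and applying H\"older's inequality to $1=\int_\Og|\phi|^2=\int_\Og|\phi|^2\cdot1$ gives $\int_\Og|\phi|^{2\sg+2}\ge|\Og|^{-\sg}=A_0^{2\sg}$ for every admissible $\phi$, with equality iff $|\phi|$ is a.e.\ constant, i.e.\ $|\phi|\equiv A_0$. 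Thus $\phi_g^{TF}\equiv A_0$ is the TF minimizer and $\int_\Og|\phi_g^{TF}|^{2\sg+2}=A_0^{2\sg}$. Substituting this into \eqref{def:E} (with the kinetic term dropped) and into \eqref{def:mu} gives the two claimed values $E_g^{TF}=\frac{A_0^{2\sg}}{\sg+1}\bt$ and $\mu_g^{TF}=\frac{A_0^{2\sg}}{\sg+1}\bt+\frac{\sg\bt}{\sg+1}A_0^{2\sg}=A_0^{2\sg}\bt$.

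To turn the heuristic $\approx$ into a genuine leading-order statement I would establish $E_g(\bt,\sg)=E_g^{TF}\,(1+o(1))$ as $\bt\to\infty$ by a two-sided bound. The lower bound is in fact exact for every $\bt\ge0$: since the kinetic energy is nonnegative, the H\"older bound above yields $E(\phi)\ge\frac{\bt}{\sg+1}\int_\Og|\phi|^{2\sg+2}\ge E_g^{TF}$ for all $\phi\in S$, hence $E_g(\bt,\sg)\ge E_g^{TF}$. For the matching upper bound I would insert a boundary-layer trial function $\phi_\dt\in S$ equal to a renormalized constant on the bulk of $\Og$ and ramping linearly down to $0$ across a layer of width $\dt$ adjacent to $\p\Og$ (for instance a product of one-dimensional trapezoidal profiles), so that the Dirichlet condition holds. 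A direct computation gives $E(\phi_\dt)=E_g^{TF}\bigl(1+O(\dt)\bigr)+O\!\bigl(A_0^2|\p\Og|/\dt\bigr)$, the first correction coming from the renormalization and the depleted nonlinear density inside the layer, the second from the kinetic energy stored in the ramps. Balancing the two corrections at $\dt\sim\bt^{-1/2}\to0$ makes both $o(\bt A_0^{2\sg})=o(E_g^{TF})$, so $E_g(\bt,\sg)\le E(\phi_\dt)=E_g^{TF}(1+o(1))$; combined with the lower bound this proves $E_g(\bt,\sg)=E_g^{TF}(1+o(1))$.

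The chemical-potential asymptotics then follows by sandwiching the interaction integral. From $E_g(\bt,\sg)\ge\frac{\bt}{\sg+1}\int_\Og|\phi_g^{\bt,\sg}|^{2\sg+2}\ge E_g^{TF}$ together with $E_g(\bt,\sg)=E_g^{TF}(1+o(1))$ I obtain $\int_\Og|\phi_g^{\bt,\sg}|^{2\sg+2}\to A_0^{2\sg}$; consequently the kinetic energy $\frac12\|\na\phi_g^{\bt,\sg}\|^2=E_g(\bt,\sg)-\frac{\bt}{\sg+1}\int_\Og|\phi_g^{\bt,\sg}|^{2\sg+2}$ is $o(\bt A_0^{2\sg})$, and inserting these into \eqref{def:mu} yields $\mu_g(\bt,\sg)=E_g(\bt,\sg)+\frac{\sg\bt}{\sg+1}\int_\Og|\phi_g^{\bt,\sg}|^{2\sg+2}=A_0^{2\sg}\bt\,(1+o(1))=\mu_g^{TF}(1+o(1))$. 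Finally, because $\int_\Og|\phi_g^{\bt,\sg}|^{2\sg+2}\to A_0^{2\sg}$ is exactly the H\"older minimum, the equality-case (uniform convexity) argument forces $\phi_g^{\bt,\sg}\to A_0$ in the interior of $\Og$, which justifies the profile approximation \eqref{sol:gGPE_TF}.

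I expect the one real obstacle to be the upper-bound step, precisely because the TF profile $A_0$ violates the homogeneous Dirichlet condition. The delicate points are constructing the $d$-dimensional boundary layer explicitly (via a product of trapezoidal profiles and its renormalization), controlling the $O(A_0^2|\p\Og|/\dt)$ kinetic term uniformly in $\bt$, and checking that after the optimal choice $\dt\sim\bt^{-1/2}$ both the renormalization correction and the layer correction are genuinely $o(E_g^{TF})$. Everything else is the same two-sided energy comparison that underlies the harmonic-trap case.
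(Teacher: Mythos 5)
Your derivation is correct and lands on exactly the formulas in \eqref{sol:gGPE_TF}--\eqref{gGPE_E_approx}, but it takes a genuinely different (and substantially stronger) route than the paper. The paper's proof is a two-line formal Thomas--Fermi reduction at the level of the Euler--Lagrange equation: it drops the Laplacian in \eqref{eq:eig}, solves the resulting algebraic equation $\mu_g^{TF}\phi=\beta|\phi|^{2\sigma}\phi$ for the constant profile, fixes $\mu_g^{TF}$ by normalization, and substitutes back into \eqref{def:mu}; no error control is attempted. You instead work at the level of the energy functional: you identify the constant as the minimizer of the reduced energy via the H\"older (Jensen) inequality $\int_\Omega|\phi|^{2\sigma+2}\ge|\Omega|^{-\sigma}$ with equality iff $|\phi|$ is constant, which immediately gives the exact lower bound $E_g(\beta,\sigma)\ge E_g^{TF}$ for all $\beta$, and you then match it with a trapezoidal boundary-layer trial function whose width $\delta\sim\beta^{-1/2}$ balances the $O(\beta\delta)$ depletion error against the $O(\delta^{-1})$ kinetic cost, yielding $E_g(\beta,\sigma)=E_g^{TF}(1+O(\beta^{-1/2}))$ and, by the sandwich on the interaction integral, $\mu_g(\beta,\sigma)=\mu_g^{TF}(1+o(1))$. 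What your approach buys is a rigorous leading-order asymptotic statement (and, as a byproduct, the correct $O(\beta^{-1/2})$ boundary-layer width that the paper only obtains later via Lemmas~\ref{lemma:mGPE_box_layer} and \ref{varphi532}); what the paper's buys is brevity and the explicit TF profile as a solution of the reduced eigenvalue problem, consistent with the harmonic-trap treatment in Lemma~\ref{lemma:strong_nlse}. The only point I would tighten is your final claim that the convergence $\int_\Omega|\phi_g^{\beta,\sigma}|^{2\sigma+2}\to A_0^{2\sigma}$ ``forces'' $\phi_g^{\beta,\sigma}\to A_0$ in the interior: this requires invoking the uniform convexity of $L^{\sigma+1}$ applied to $|\phi_g^{\beta,\sigma}|^2$ (a Clarkson-type argument) to pass from near-equality in H\"older to norm convergence, and it only gives convergence in $L^{2\sigma+2}(\Omega)$, not pointwise; since the paper itself makes no such claim, this is a refinement rather than a gap.
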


\begin{proof}
Similar to the proof in Lemma \ref{lemma:strong_nlse}, we drop the diffusion term in (\ref{eq:eig}) with $V(\bx)\equiv 0$ and get
\be
\mu_g^{TF}\phi_g^{\rm TF}(\mathbf{x})=\beta|\phi_g^{\rm TF}(\bx)|^{2\sigma}
\phi_g^{\rm TF}(\bx),\qquad \mathbf{x}\in\Omega.
\ee
Solving the above equation, we get
\be\label{proof:box_TF}
\phi_g^{TF}(\mathbf{x})=\left(\mu_g^{TF}/\beta\right)^{\frac{1}{2\sigma}},
\qquad \mathbf{x}\in\Omega.
\ee
$\mu_g^{TF}$ can be determined by plugging \eqref{proof:box_TF} into the normalization condition \eqref{norm}. 
And thus we obtain (\ref{sol:gGPE_TF}). Inserting
(\ref{sol:gGPE_TF}) into (\ref{def:mu}), we obtain $E_g^{TF}$.
\end{proof}

Note that the TF approximation (\ref{sol:gGPE_TF}) does not satisfy the homogeneous Dirichlet
BC. Therefore, the approximation is not uniformly accurate.
In fact, there exists a boundary layer along $\partial\Omega$
in the ground state when $\beta\gg1$.
Similar to the case of $\sigma=1$ \cite{BaoL,LZBao}, by using the matched asymptotic expansion method,
we can obtain an approximation which is uniformly accurate when $\beta\gg1$.

\begin{lemma}\label{lemma:mGPE_box_layer}
When $\beta\gg1$, i.e. strongly repulsive interaction regime,
a uniformly accurate ground state approximation can be given as
\be\label{sol:NLSE_box_MA}
\phi_g^{\beta,\sigma}(\bx)\approx\phi_g^{MA}(\mathbf{x})=\left(\frac{\mu_g^{MA}}{\beta}
\right)^{\frac{1}{2\sigma}}\prod_{j=1}^d\phi_{\sigma}(x_j; L_j,\mu_g^{MA}),
\ee
where $\phi_{\sigma}(x; L,\mu)=\varphi_{\sigma}\left(x\sqrt{\mu}\right)
+\varphi_{\sigma}\left((L-x)\sqrt{\mu}\right)-\varphi_{\sigma}\left(L\sqrt{\mu}\right)$, 
$\varphi_{\sigma}\left(L_j\sqrt{\mu_g^{MA}}\right)\approx 1$ and $\mu_g^{MA}\approx \mu_g(\beta,\sigma)=O(\beta)$ is the approximate chemical potential determined by the normalization condition \eqref{norm} and $\varphi_{\sigma}(x)$ satisfies the problem
\begin{equation}\label{gGPE_bl}
\begin{cases}
\varphi_{\sigma}(x)=-\frac{1}{2}\varphi_{\sigma}^{\prime\prime}(x)+\varphi_{\sigma}^{2\sigma+1}(x),
\qquad 0< x<+\infty,\\
\varphi_{\sigma}(0)=0,\qquad \lim\limits_{x\to+\infty}\varphi_{\sigma}(x)=1.
\end{cases}
\end{equation}
\end{lemma}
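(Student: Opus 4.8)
The plan is to carry out a matched asymptotic expansion, exactly parallel to the $\sigma=1$ treatment in \cite{BaoL,LZBao}, treating $\beta^{-1}$ (equivalently $\mu_g^{-1}$, since $\mu_g=O(\beta)$ by Lemma \ref{lemma:NLSE_box_strong}) as the small parameter. I would first reduce to one dimension: seek a separable leading approximation $\phi_g^{MA}(\bx)=(\mu_g^{MA}/\beta)^{1/(2\sigma)}\prod_{j=1}^d u(x_j)$, with each factor normalized so that $u\approx1$ in the interior. Substituting this product into (\ref{eq:eig}) with $V\equiv0$ and using that $u(x_k)\approx1$ and $u''(x_k)\approx0$ away from the face $x_k\in\{0,L_k\}$, the Laplacian and the nonlinear term both collapse, in a neighbourhood of any single face, to their one-dimensional counterparts. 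Thus the system decouples into $d$ identical one-dimensional boundary-layer problems, up to errors that are exponentially small in the interior and of lower order near edges and corners. The core of the proof is therefore the one-dimensional analysis.

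In 1D the outer (interior) solution is the TF constant of Lemma \ref{lemma:NLSE_box_strong}, which violates $\phi(0)=\phi(L)=0$ and so forces a boundary layer at each endpoint. To resolve the layer at $x=0$ I would write $\phi(x)=(\mu/\beta)^{1/(2\sigma)}\varphi(\sqrt{\mu}\,x)$; inserting this into $-\tfrac{1}{2}\phi''+\beta\phi^{2\sigma+1}=\mu\phi$ and dividing by the common factor $(\mu/\beta)^{1/(2\sigma)}$ cancels all $\beta$- and $\mu$-dependence and yields precisely the universal profile equation (\ref{gGPE_bl}) for $\varphi=\varphi_\sigma$, in the stretched variable $y=\sqrt{\mu}\,x$. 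The Dirichlet condition gives $\varphi_\sigma(0)=0$, and matching to the interior forces $\varphi_\sigma(y)\to1$ as $y\to\infty$. I would then record that (\ref{gGPE_bl}) has a unique positive, monotone increasing solution approaching $1$ exponentially: multiplying by $\varphi_\sigma'$ and integrating gives the first integral $(\varphi_\sigma')^2=\tfrac{2}{\sigma+1}(\varphi_\sigma^{2\sigma+2}-1)+2(1-\varphi_\sigma^2)$, whose right-hand side is positive and decreasing on $(0,1)$, so the heteroclinic orbit from $0$ to $1$ exists, is monotone, and is pinned uniquely by $\varphi_\sigma(0)=0$.

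The layer at $x=L$ is the reflection $\varphi_\sigma(\sqrt{\mu}\,(L-x))$. To build a single uniformly valid profile I would form the additive composite of the two layers, subtracting the common overlap value. The one subtlety, and the key trick for uniform accuracy, is that instead of subtracting the matching constant $1$ one subtracts $\varphi_\sigma(\sqrt{\mu}\,L)$. Since $L\sqrt{\mu}\gg1$ we have $\varphi_\sigma(\sqrt{\mu}\,L)\approx1$, so the interior behaviour is unchanged, but this choice makes $\phi_\sigma(x;L,\mu)=\varphi_\sigma(\sqrt{\mu}\,x)+\varphi_\sigma(\sqrt{\mu}\,(L-x))-\varphi_\sigma(\sqrt{\mu}\,L)$ satisfy the homogeneous Dirichlet condition \emph{exactly} at both endpoints. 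Restoring the prefactor and taking the product over $j$ gives (\ref{sol:NLSE_box_MA}); imposing the normalization (\ref{norm}) on this explicit expression fixes $\mu_g^{MA}$. Because each layer occupies a region of width $O(\mu^{-1/2})$, its contribution to the normalization integral is a relative correction of order $\mu^{-1/2}$ to the TF value, consistent with $\mu_g^{MA}\approx\mu_g(\beta,\sigma)=O(\beta)$.

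The main obstacle I expect is not the one-dimensional layer construction, which is routine once the scaling $\phi=(\mu/\beta)^{1/(2\sigma)}\varphi_\sigma(\sqrt{\mu}\,x)$ is spotted, but the justification in $d>1$: the product ansatz solves the PDE only approximately, and the error concentrates in the edge and corner regions where two or more layers overlap, since there the nonlinear term $\prod_j u(x_j)^{2\sigma+1}$ is not captured by the separated Laplacian. Showing that this corner mismatch is genuinely of lower order, and hence does not spoil uniform accuracy, is the delicate point; I would argue at the formal level (as elsewhere in the paper) that each factor differs from $1$ only within an $O(\mu^{-1/2})$ neighbourhood of a single face, so multi-face overlaps contribute at higher order, and defer a fully rigorous error bound.
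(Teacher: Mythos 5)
Your proposal is correct and follows essentially the same route as the paper's proof: the same boundary-layer rescaling $\phi=(\mu/\beta)^{1/(2\sigma)}\varphi_\sigma(\sqrt{\mu}\,x)$ leading to \eqref{gGPE_bl}, the same additive composite of the two inner layers with the outer TF constant via matched asymptotics, and the same dimension-by-dimension reduction in $d>1$. The extra details you supply --- the first integral establishing existence and monotonicity of $\varphi_\sigma$ (which the paper defers to Lemma \ref{varphi532} and Appendix \ref{mgpe_layer}) and the observation that subtracting $\varphi_\sigma(L\sqrt{\mu})$ rather than $1$ makes the composite satisfy the Dirichlet condition exactly --- are consistent with, and slightly sharper than, the paper's presentation.
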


\begin{proof} For the simplicity of notation, we only prove it in 1D here.
Extension to higher dimensions can be done via dimension-by-dimension.
When $d=1$, there are two boundary layers in the ground state at $x_1=0$ and $x_1=L_1$,
respectively. Near $x_1=0$, we introduce the new variables
\be\label{changev}
\tilde{x}=x_1\sqrt{\mu_g(\beta,\sigma)},\quad
 \varphi_{\sigma}(\tilde{x})=\left(\frac{\beta}
 {\mu_g(\beta,\sigma)}\right)^{\frac{1}{2\sigma}}\phi(x_1),\quad x_1\ge0.
\ee
Substituting (\ref{changev}) into (\ref{eq:eig}) with $d=1$, $\Omega=(0,L_1)$ and
$V(\bx)\equiv 0$ and then removing all $\tilde{}$, we get (\ref{gGPE_bl}).
After obtaining the solution of (\ref{gGPE_bl}), an inner approximation
of the ground state near $x_1=0$ is given as
\be\label{blinn34}
\phi_g^{\beta,\sigma}(x_1)\approx
\left(\frac{\mu_g(\beta,\sigma)}{\beta}\right)^{\frac{1}{2\sigma}}
\varphi_{\sigma}\left(x_1\sqrt{\mu_g(\beta,\sigma)}\right),\qquad 0\le x_1\ll1.
\ee
Similarly, we can get the inner approximation
of the ground state near $x_1=L_1$ as
\be\label{blinn35}
\phi_g^{\beta,\sigma}(s)\approx
\left(\frac{\mu_g(\beta,\sigma)}{\beta}\right)^{\frac{1}{2\sigma}}
\varphi_{\sigma}\left(s\sqrt{\mu_g(\beta,\sigma)}\right),\quad 0\le s:=L_1-x_1\ll1.
\ee
Combining (\ref{blinn34}), (\ref{blinn35}) and the outer TF approximation (\ref{sol:gGPE_TF}),
using the matched asymptotic expansion method via denoting $\mu_g^{\rm TF}$ and $\mu_g(\beta,\sigma)$ by
$\mu_g^{\rm MA}$, we can obtain  (\ref{sol:NLSE_box_MA}).
\end{proof}

When $\sigma=1$, the solution of (\ref{gGPE_bl}) is given as
$\varphi_1(x)=\tanh(x)$ for $x\ge0$ \cite{BaoL,LZBao}.
For $0<\sigma\ne1$, in general, the problem (\ref{gGPE_bl})
cannot be solved explicitly. By a mathematical analysis (see details in Appendix \ref{mgpe_layer}),
we have

\begin{lemma}\label{varphi532} For any $\sigma>0$, the solution $\varphi_\sigma(x)$ of (\ref{gGPE_bl})
is a strictly increasing function for $x\ge0$ and satisfies
$\varphi_{\sigma}'(0)=\sqrt{\frac{2\sigma}{\sigma+1}}$. In addition,
when $\sigma\to+\infty$, we have
\begin{equation}\label{layer:gGPE}
\begin{aligned}
\varphi_\sigma(x)\to \varphi_{\infty}(x)=
\begin{cases}
\sin(\sqrt{2}x), &0\le x<\frac{\sqrt{2}\pi}{4},\\
1, &x\ge\frac{\sqrt{2}\pi}{4}.\\
\end{cases}
\end{aligned}
\end{equation}
\end{lemma}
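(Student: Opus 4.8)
The plan is to exploit the fact that the boundary-value problem \eqref{gGPE_bl} is \emph{autonomous}, so it admits a first integral. Rewriting \eqref{gGPE_bl} as $\varphi_\sigma'' = 2\varphi_\sigma^{2\sigma+1} - 2\varphi_\sigma$ and multiplying by $\varphi_\sigma'$, one integrates once to obtain $(\varphi_\sigma')^2 = \tfrac{2}{\sigma+1}\varphi_\sigma^{2\sigma+2} - 2\varphi_\sigma^2 + C$. Since $\varphi_\sigma$ is bounded and tends to the finite limit $1$ as $x\to\infty$, the derivative is forced to satisfy $\varphi_\sigma'\to0$, and this pins down $C=\tfrac{2\sigma}{\sigma+1}$, giving
\[
(\varphi_\sigma'(x))^2 = \frac{2}{\sigma+1}\,g_\sigma(\varphi_\sigma(x)), \qquad g_\sigma(t):=t^{2\sigma+2}-(\sigma+1)t^2+\sigma.
\]
Evaluating at $x=0$ with $\varphi_\sigma(0)=0$ immediately yields $(\varphi_\sigma'(0))^2=\tfrac{2\sigma}{\sigma+1}$, i.e. the claimed value $\varphi_\sigma'(0)=\sqrt{2\sigma/(\sigma+1)}$ (the positive root, since $\varphi_\sigma$ rises from $0$ toward $1$).

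For monotonicity I would analyse the polynomial $g_\sigma$ on $[0,1]$. One has $g_\sigma(0)=\sigma>0$, $g_\sigma(1)=0$, and $g_\sigma'(t)=2(\sigma+1)t(t^{2\sigma}-1)<0$ for $t\in(0,1)$, so $g_\sigma$ is strictly decreasing and therefore strictly positive on $[0,1)$. Hence $(\varphi_\sigma')^2>0$ wherever $\varphi_\sigma<1$; since $\varphi_\sigma'(0)>0$ and $\varphi_\sigma'$ cannot change sign before $\varphi_\sigma$ reaches $1$, we conclude $\varphi_\sigma'>0$ throughout and $\varphi_\sigma$ is strictly increasing. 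A local expansion at $t=1$ (where $g_\sigma'(1)=0$ and $g_\sigma''(1)=4\sigma(\sigma+1)$, so $g_\sigma(t)\sim 2\sigma(\sigma+1)(t-1)^2$) confirms that $\varphi_\sigma$ attains the value $1$ only as $x\to\infty$, consistently with the boundary condition.

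Finally, for the limit $\sigma\to\infty$ I would separate variables in the first integral. As $\varphi_\sigma$ is a strictly increasing bijection of $[0,\infty)$ onto $[0,1)$, its inverse is
\[
x_\sigma(t)=\int_0^t \frac{ds}{\sqrt{\tfrac{2}{\sigma+1}g_\sigma(s)}}, \qquad t\in[0,1).
\]
For fixed $t\in(0,1)$ one has $\tfrac{1}{\sigma+1}g_\sigma(s)\to 1-s^2$ pointwise on $[0,t]$, while monotonicity of $g_\sigma$ gives $\tfrac{2}{\sigma+1}g_\sigma(s)\ge \tfrac{2}{\sigma+1}g_\sigma(t)\to 2(1-t^2)>0$, a uniform lower bound for large $\sigma$ and hence an integrable dominating function on $[0,t]$. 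Dominated convergence then yields $x_\sigma(t)\to\int_0^t\frac{ds}{\sqrt{2(1-s^2)}}=\tfrac{1}{\sqrt2}\arcsin t$. Inverting this monotone limit gives $\varphi_\sigma(x)\to\sin(\sqrt2\,x)$ for $0\le x<\tfrac{\sqrt2\pi}{4}$; for $x\ge\tfrac{\sqrt2\pi}{4}$, monotonicity of $\varphi_\sigma$ together with $\varphi_\sigma(x')\to\sin(\sqrt2\,x')\to1$ as $x'\uparrow\tfrac{\sqrt2\pi}{4}$ forces $\varphi_\sigma(x)\to1$. This is exactly \eqref{layer:gGPE}.

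I expect the limit step to be the main obstacle. The integrand in $x_\sigma(t)$ has a non-integrable singularity at $s=1$, so $x_\sigma(t)\to\infty$ as $t\uparrow1$ for every finite $\sigma$, whereas the limiting time-to-reach-$1$ is the \emph{finite} value $\tfrac{\sqrt2\pi}{4}$. Consequently the convergence is only pointwise and fails to be uniform at the corner $x=\tfrac{\sqrt2\pi}{4}$; care is required to restrict the dominated-convergence argument to intervals $[0,t]$ with $t<1$ and then to recover the behaviour past the corner by the monotone-squeeze argument above rather than by a direct estimate.
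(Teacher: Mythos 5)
Your derivation of the first integral and of $\varphi_\sigma'(0)=\sqrt{2\sigma/(\sigma+1)}$ is exactly the paper's: multiply by $\varphi_\sigma'$, integrate, fix the constant from the behaviour at infinity, evaluate at $x=0$. Where you genuinely diverge is in the remaining two claims. For monotonicity the paper only invokes the maximum principle to get $0\le\varphi_\sigma<1$ and otherwise appeals to the numerics; your sign analysis of $g_\sigma(t)=t^{2\sigma+2}-(\sigma+1)t^2+\sigma$ on $[0,1)$, together with the quadratic vanishing $g_\sigma(t)\sim 2\sigma(\sigma+1)(t-1)^2$ showing that $\varphi_\sigma$ reaches $1$ only in infinite time, actually proves strict monotonicity and is a real improvement. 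For the limit $\sigma\to\infty$ the paper argues formally: since $0\le\varphi_\sigma<1$, the term $\varphi_\sigma^{2\sigma+1}$ tends to $0$ pointwise, so the boundary-layer problem ``converges'' to the linear IVP $\varphi_\infty''=-2\varphi_\infty$, $\varphi_\infty(0)=0$, $\varphi_\infty'(0)=\sqrt2$, whose solution is then capped at $1$ to produce \eqref{layer:gGPE}. That passage is heuristic (the convergence of the nonlinearity is not uniform near where $\varphi_\sigma$ approaches $1$, and the linear IVP by itself does not produce the plateau). Your route --- separating variables, passing to the inverse $x_\sigma(t)=\int_0^t\bigl(\tfrac{2}{\sigma+1}g_\sigma(s)\bigr)^{-1/2}ds$, applying dominated convergence on $[0,t]$ for $t<1$, and recovering the region $x\ge\sqrt2\pi/4$ by the monotone squeeze --- is more work but turns the paper's formal limit into an actual proof, and your closing remark correctly identifies why the convergence can only be pointwise near the corner. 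The only mild caveat is that, like the paper, you take for granted the existence of the connecting solution $\varphi_\sigma$ of \eqref{gGPE_bl} and the a priori facts $\varphi_\sigma\to1$, $\varphi_\sigma'\to0$ used to fix the integration constant; since the lemma speaks of ``the solution'', this is acceptable.
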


Combining Lemmas~\ref{lemma:mGPE_box_layer} and \ref{varphi532},
we get the width of the boundary layers in the ground state in strongly repulsive
interaction regime, i.e. $\beta\gg1$,
is of order $O\left(\frac{1}{\sqrt{\beta}}\right)$ for any $\sigma>0$,
which is the same as in the GPE case \cite{BaoL,LZBao}.

As for the case $\beta\to-\infty$ with $d\sigma<2$, the limiting ground state should be the same as for the hamonic potential case since there is no external potential term in \eqref{eq:neg_infty}.   The details are omitted here for brevity.

Now we check the accuracy of the energy asymptotics in Lemma \ref{lemma:NLSE_box_weak} and \ref{lemma:NLSE_box_strong}. 
Figure~\ref{fig:gGPE_box_bE} shows the relative error of the energy approximation of the ground state,
i.e. $e(\beta):=\frac{|E_g(\beta,2)-E_g^{\rm app}|}{E_g(\beta,2)}$
when $\sigma=2$ for different $\beta\ge0$. As shown in the figure, the relative error goes to 0 as $\beta\to0$ or $\beta\to\infty$.

\begin{figure}[htbp]
\centerline{\psfig{figure=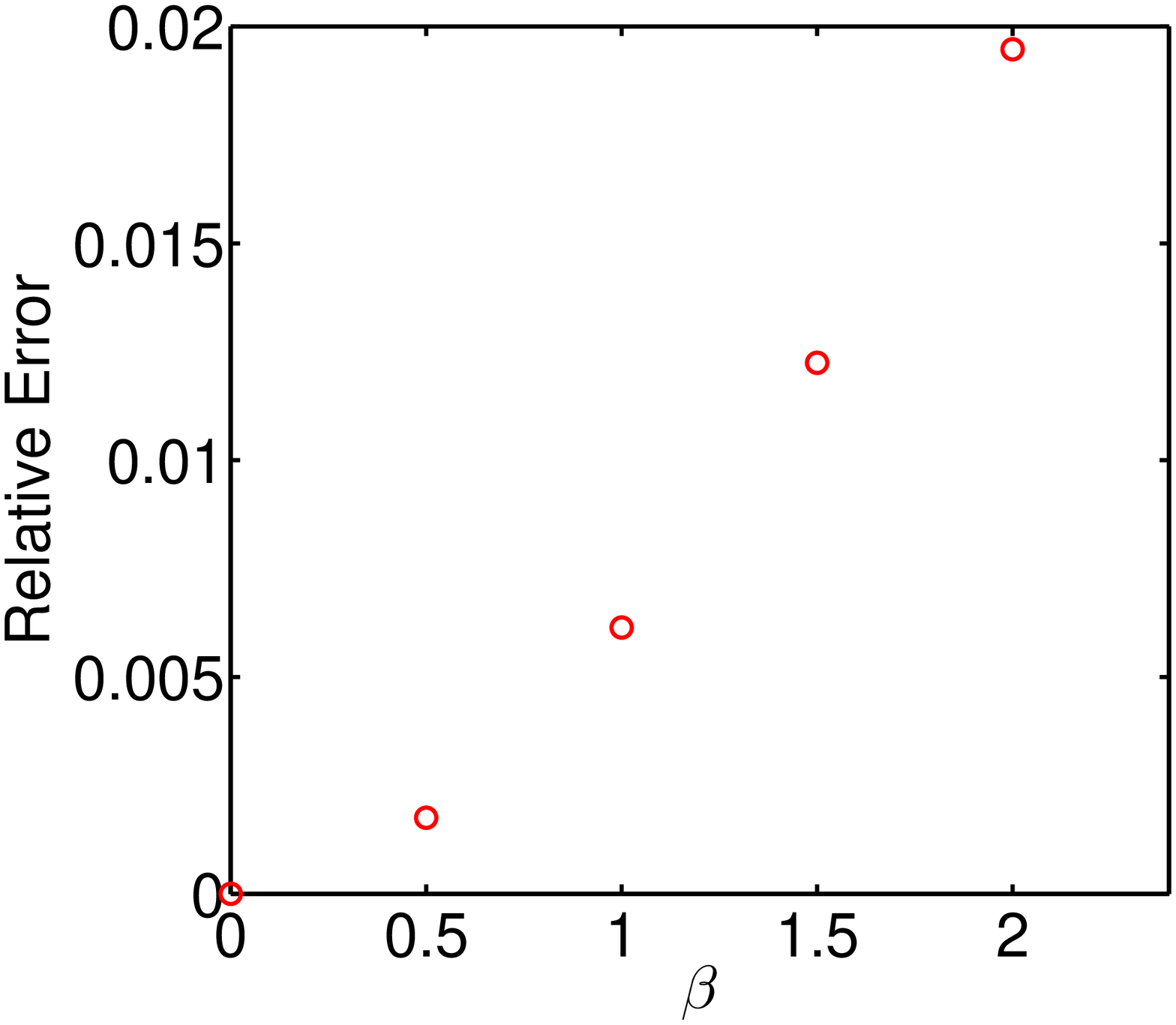,height=5cm,width=6.5cm,angle=0}
\psfig{figure=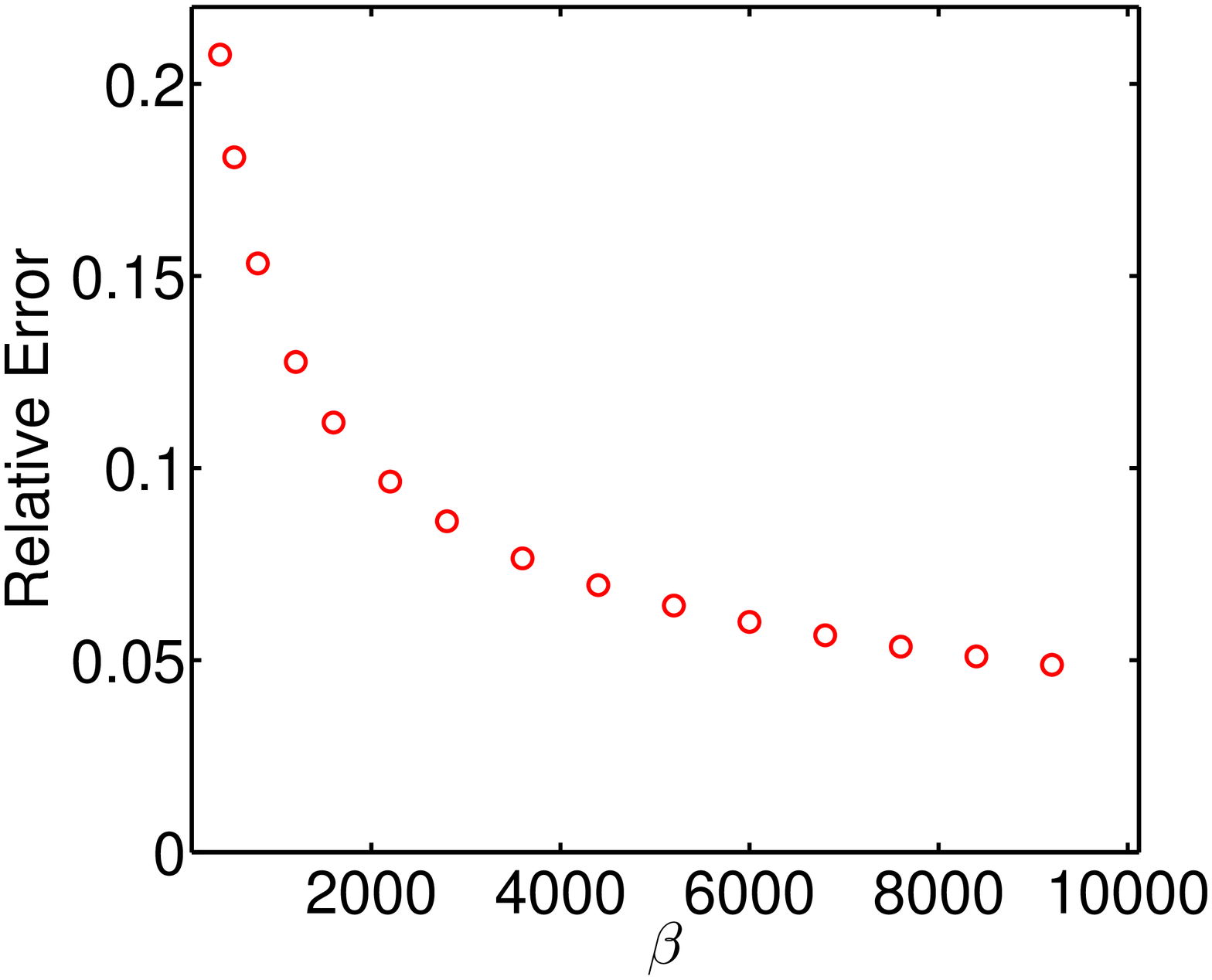,height=5cm,width=6.5cm,angle=0}}
\caption{Relative errors of the energy approximation of the ground state for
 the NLSE with $L=1$ and $\sigma=2$ in 1D with the
 box potential in the weak (left) and strong (right)
 interaction regimes.}
\label{fig:gGPE_box_bE}
\end{figure}

\subsection{When $\sigma\to\infty$ under a fixed $\beta>0$}
Here we assume $\beta>0$ is a given constant and we shall study the limit of the ground state
$\phi_g^{\beta,\sigma}$ when $\sigma\to\infty$.
For simplicity, we will only consider the NLSE in
1D on a bounded interval $\Omega=(0,L)$ with $L>0$.

\begin{lemma}\label{gGPE_box_s}
For any given $\beta>0$, when $\sigma\to\infty$, we have

(i) If $0<L<1$, the ground state converges to the TF approximation
\begin{align} \label{bnlse754}
&\phi_g^{\beta,\sigma}(x)\to \phi_g^{TF}(x)=\frac{1}{\sqrt{L}}, \qquad 0<x<L,\\
&\mu_g(\beta,\sigma)\approx\frac{\beta}
{L^{\sigma+1}} \to \infty,
\quad E_g(\beta,\sigma)\approx\frac{\beta}
{(\sigma+1)L^{\sigma+1}} \to \infty.
\end{align}

(ii) If $L\ge2$, the ground state converges to the linear approximation
\begin{align}\label{bnlse757}
&\phi_g^{\beta,\sigma}(x)\approx\phi_g^{0}(x)=\sqrt{\frac{2}{L}}
\sin\left(\frac{\pi{x}}{L}\right), \qquad 0\le x\le L,\\
&\mu_g(\beta,\sigma)\approx\frac{\pi^2}{2L^2}+\frac{2\beta}{\pi}
\left(\frac{2}{L}\right)^{\sigma}\left[\frac{\Gamma(\sigma+\frac{3}{2})
\Gamma(\frac{1}{2})}{\Gamma(\sigma+2)}\right]\to\frac{\pi^2}{2L^2}, \\
&{E}_g(\beta,\sigma)\approx\frac{\pi^2}{2L^2}+\frac{2\beta}{(\sigma+1)\pi}
\left(\frac{2}{L}\right)^{\sigma}\left[\frac{\Gamma(\sigma+\frac{3}{2})
\Gamma(\frac{1}{2})}{\Gamma(\sigma+2)}\right]\to\frac{\pi^2}{2L^2}.
\end{align}

(iii) If $1<L<2$, the ground state converges to
\begin{align}\label{nlsebox679}
&\phi_g^{\beta,\sigma}(x)\to \phi_g^{\infty}(x)=
\begin{cases}
\sin(\frac{\pi{x}}{2(L-1)}), &0\le{x}<L-1,\\
1, &L-1\le{x}\le1,\\
\sin(\frac{\pi(L+x-2)}{2(L-1)}), &1<x\le{L},
\end{cases}\\
\label{nlsebox689}
&\mu_g(\beta,\sigma)\to
\frac{\pi^2}{8(L-1)^2},\quad
{E}_g(\beta,\sigma)\to\frac{\pi^2}{8(L-1)}.
\end{align}
\end{lemma}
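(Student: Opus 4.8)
The plan is to transplant the heuristic of Lemma~\ref{gGPE_har_s} to the box, the only new feature being that the external potential is absent. The starting point is again the pointwise limit $a^{2\sigma}\to 0,\,1,\,+\infty$ according as $|a|<1$, $|a|=1$, $|a|>1$. Substituting this into the stationary equation \eqref{eq:eig} with $V\equiv0$, wherever the limiting profile has $|\phi|<1$ the nonlinear term is negligible and $\phi$ solves the linear equation $-\frac12\phi''=\mu\phi$, i.e.\ it is a sine; wherever $|\phi|>1$ the Laplacian is negligible and $\beta|\phi|^{2\sigma}\phi=\mu\phi$ forces $|\phi|\to1$, a flat plateau. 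The whole problem reduces to deciding, for each $L$, which combination of a central plateau and sine boundary layers is self-consistent subject to $\phi(0)=\phi(L)=0$ and the normalization $\int_0^L|\phi|^2\,dx=1$.

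First I would dispose of the two pure regimes. The pure TF profile is the constant $1/\sqrt L$; it is self-consistent (its value is $\ge1$, so the Laplacian may be dropped everywhere) exactly when $1/\sqrt L\ge1$, i.e.\ $0<L<1$, which is case (i), and the chemical potential and energy are then read off from the TF profile as in Lemma~\ref{lemma:NLSE_box_strong}. The pure linear profile is the first Dirichlet eigenfunction $\sqrt{2/L}\sin(\pi x/L)$; it is self-consistent (its maximum is $\le1$, so the nonlinear term may be dropped everywhere) exactly when $\sqrt{2/L}\le1$, i.e.\ $L\ge2$, which is case (ii), and substituting it into \eqref{def:E} and \eqref{def:mu} as in Lemma~\ref{lemma:NLSE_box_weak} and letting $\sigma\to\infty$, where the correction terms vanish, gives the stated limits.

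The core of the argument is case (iii), $1<L<2$, where neither pure profile is admissible: the TF constant $1/\sqrt L<1$ contradicts $|\phi|\ge1$, while the linear maximum $\sqrt{2/L}>1$ violates $|\phi|\le1$. I would therefore posit the matched profile consisting of a central plateau $\phi\equiv1$ flanked by two sine layers joining the plateau down to the Dirichlet zeros. By the reflection symmetry $x\mapsto L-x$ of the (unique, positive) ground state the two layers have a common width $t$, so the plateau width is $w=L-2t$. On the left layer $\phi$ solves $-\frac12\phi''=\mu\phi$ with $\phi(0)=0$, hence $\phi(x)=A\sin(\sqrt{2\mu}\,x)$; matching to the plateau in a $C^1$ fashion, i.e.\ imposing both $\phi(t)=1$ and $\phi'(t)=0$, forces $\sqrt{2\mu}\,t=\pi/2$ and $A=1$, so $\phi(x)=\sin(\pi x/(2t))$ with $\mu=\pi^2/(8t^2)$. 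The last unknown $t$ is pinned down by normalization: since $\int_0^t\sin^2(\pi x/(2t))\,dx=t/2$, the constraint is $w+2\cdot(t/2)=w+t=1$, which together with $w=L-2t$ yields $t=L-1$ and $w=2-L$ (both positive precisely for $1<L<2$). Substituting $t=L-1$ reproduces the profile \eqref{nlsebox679} and the limit $\mu_g\to\pi^2/(8(L-1)^2)$ in \eqref{nlsebox689}; for the energy, the nonlinear contribution $\frac{\beta}{\sigma+1}\int|\phi|^{2\sigma+2}$ vanishes in the limit and $E_g$ reduces to the kinetic part, a direct computation giving $\frac12\int_0^L|\phi'|^2\,dx=\pi^2/(8(L-1))$, which is the limit recorded in \eqref{nlsebox689}.

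The hard part is not the bookkeeping but the rigorous justification that the true ground state actually converges to these formal profiles; as in Lemma~\ref{gGPE_har_s}, I would argue only at the level of identifying the limiting profile and checking its self-consistency and normalization. Within that scheme the delicate point is the $C^1$ matching at the plateau edges: it is the requirement that the slope, and not merely the value, vanishes there that simultaneously selects the quarter-sine shape, fixes the eigenvalue, and produces the sharp thresholds $L=1$ and $L=2$ separating the three regimes.
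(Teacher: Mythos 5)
Your proposal is correct and follows essentially the same matched linear/Thomas--Fermi argument as the paper: the same $a^{2\sigma}$ heuristic, the same self-consistency checks giving the thresholds $L=1$ and $L=2$, and the same $C^1$ matching plus normalization that pin down the layer width $L-1$ in case (iii). The only cosmetic differences are that you fix the plateau height at $1$ directly in the limit, whereas the paper carries a $\sigma$-dependent height $A_\sigma=(\mu_g/\beta)^{1/2\sigma}$ determined jointly with $x_c^\sigma$ and passes to the limit at the end, and that the paper justifies the vanishing of the nonlinear energy term a bit more carefully by bounding $\beta\int_{L-1}^{1}|\phi_g^{\beta,\sigma}|^{2\sigma+2}\,dx$ by $\mu_g(\beta,\sigma)$ before invoking the $1/(\sigma+1)$ factor.
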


\begin{proof}
Similar to the proof in Lemma \ref{gGPE_har_s}, 
we need to determine which term on the left hand side of (\ref{eq:eig})
is negligible when $\sigma>>1$. In the region where $|\phi(x)|<1$, the nonlinear term can
be dropped and we get the linear approximation, whose solution is the sine function.
In the region where $|\phi(x)|>1$, the diffusion term can be dropped and we
get the TF approximation, whose solution is a constant. Therefore, there are three possible cases concerning the
limit $\phi_g^{\beta,\sigma}(x)\to\phi^{\rm{app}}(x)$ for $0<x<L$ when $\sigma\to+\infty$:
(i) $|\phi^{\rm{app}}(x)|\ge1$ for all $x\in(0,L)$, (ii) $|\phi^{\rm app}(x)|\le1$ for all $x\in(0,L)$,
and (iii) there exists $0<x_c<L/2$ such that $|\phi^{\rm{app}}(x)|\ge1$ for $x\in[x_c,L-x_c]$
and $|\phi^{\rm{app}}(x)|<1$ otherwise.

(i) When $0<L\le1$, the TF approximation suggests that  $\phi^{\rm app}(x)=\sqrt{1/L}\ge1$ for
$0<x<L$. Note that the requirement that $\inf\limits_{0<x<L}\phi^{\rm app}(x)\ge1$ implies that $L\le1$.
Therefore, we get the necessary and sufficient condition about $L$ for
(\ref{bnlse754}) to be true.

(ii) When $L\ge2$, the linear approximation suggests that
$\phi^{\rm app}(x)=\frac{2}{L}\sin\left(\frac{\pi x}{L}\right)\le1$ for
$0<x<L$. Note that the requirement that $\sup\limits_{0<x<L}\phi^{\rm app}(x)\le1$ implies that $L\ge2$.
Therefore, we get the necessary and sufficient condition about $L$ for
(\ref{bnlse757}) to be true.

(iii) When $1<L<2$, we may expect neither the linear approximation nor the
TF approximation is valid for $0<x<L$.
Instead, a combination of the linear approximation and TF approximation should be used.
To be more specific, for any fixed $\sigma>0$, when
$\beta>>1$, there exists a constant $x_c^\sigma$ such that when $x\in(0,x_c^\sigma)$
or $x\in[L-x_c^\sigma,L]$,
the linear approximation is used; and  when $x\in[x_c^\sigma,L-x_c^\sigma]$, the TF approximation
which is a constant, should be used.
For $x\in[x_c^\sigma,L-x_c^\sigma]$, assuming that $\phi^\sigma_g(x)=A_\sigma$ with $A_\sigma>0$ is a constant to be determined,
the approximate solution in $(0,x_c^\sigma)$ must be $\phi^{\sigma}_g(x)=
A_\sigma\sin\left(\frac{\pi{x}}{2x_c^\sigma}\right)$ in order to make the combined solution to be $C^1$ continuous.
Now we need to determine the value of $A_\sigma$ and $x_c^\sigma$.  By the normalization condition (\ref{norm}), we get
\bea
\frac{1}{2}=\int_0^{\frac{L}{2}}|\phi^{\sigma}_g(x)|^2dx=
\int_0^{x_c^\sigma}|\phi^{\sigma}_g(x)|^2dx+\int_{x_c^\sigma}^{\frac{L}{2}}|\phi^{\sigma}_g(x)|^2dx
=A_\sigma^2\left(\frac{L}{2}-\frac{x_c^\sigma}{2}\right).\quad
\eea
Thus, we have
\be\label{gGPE_box_p1}
A_\sigma=\frac{1}{\sqrt{L-x_c^\sigma}}.
\ee
In $[0,x_c^\sigma)$, dropping the nonlinear term in (\ref{eq:eig})
and substituting the approximate solution into it, we get
\begin{equation}\label{mu_gGPE_box_in_proof}
\mu_g=\frac{\pi^2}{8(x_c^\sigma)^2}.
\end{equation}
In $[x_c^\sigma,L-x_c^\sigma]$, dropping the  diffusion term in (\ref{eq:eig}), we get
\begin{equation}\label{mg3456}
\mu_g=\beta A_\sigma^{2\sigma}.
\end{equation}
Combining (\ref{mu_gGPE_box_in_proof}) and (\ref{mg3456}), we obtain
\begin{equation}\label{gGPE_box_p2}
A_\sigma^2=\left(\frac{\pi^2}{8\beta (x_c^\sigma)^2}\right)^{1/\sigma}.
\end{equation}
Inserting (\ref{gGPE_box_p1}) into (\ref{gGPE_box_p2}), we have
\be
\left(\frac{\pi^2}{8\beta(x_c^\sigma)^2}\right)^{\frac{1}{\sigma}}=\frac{1}{L-x_c^\sigma}.
\ee
Letting $\sigma\rightarrow\infty$ and assuming $x_c^\sigma\to x_c$ and $A_\sigma\to A$,
we have $1=\frac{1}{L-x_c}$, which implies that $x_c=L-1$ and we get $A=1$ via (\ref{gGPE_box_p1})
when $\sigma\to\infty$. Thus we get (\ref{nlsebox679}) when $\sigma\to\infty$.
$\mu_g(\beta,\infty)$ can be computed from (\ref{mu_gGPE_box_in_proof}) and $E_g(\beta,\infty)$ is from definition (\ref{def:E}), i.e. 
$$E_g(\beta,\infty)=\lim_{\sigma\to\infty}\int_0^L\left[\frac{1}{2}|\nabla
\phi_g^{\beta,\sigma}|^2+\frac{\beta}{\sigma+1}|\phi_g^{\beta,\sigma}|^{2\sigma+2}\right]dx.$$
However, direct computation by using (\ref{nlsebox679}) may be unreasonable because we cannot get the limit of $\int_{L-1}^1|\phi_g^{\beta,\sigma}|^{2\sigma+2}dx$. In fact, to get $E_g(\beta,\infty)$, we only need the upper limit of $\int_{L-1}^1|\phi_g^{\beta,\sigma}|^{2\sigma+2}dx$ is bounded,
which is true because
$$0\le\limsup_{\sigma\to\infty}\beta\int_{L-1}^1|\phi_g^{\beta,\sigma}|^{2\sigma+2}dx
\le\lim_{\sigma\to\infty}\mu_g(\beta,\sigma)=\frac{\pi^2}{8(L-1)^2}.$$
It follows that $\lim_{\sigma\to\infty}\int_0^L\frac{\beta}{\sigma+1}|\phi_g^{\beta,\sigma}|^{2\sigma+2}dx=0$ and
$$E_g(\beta,\infty)
=\lim_{\sigma\to\infty}\int_0^L\frac{1}{2}|\nabla\phi_g^{\beta,\sigma}|^2dx
\approx\int_0^L\frac{1}{2}|\nabla\phi_g^{\beta,\infty}|^2dx
=\frac{\pi^2}{8(L-1)}.$$
\end{proof}

\begin{figure}[htbp]
\centerline{\psfig{figure=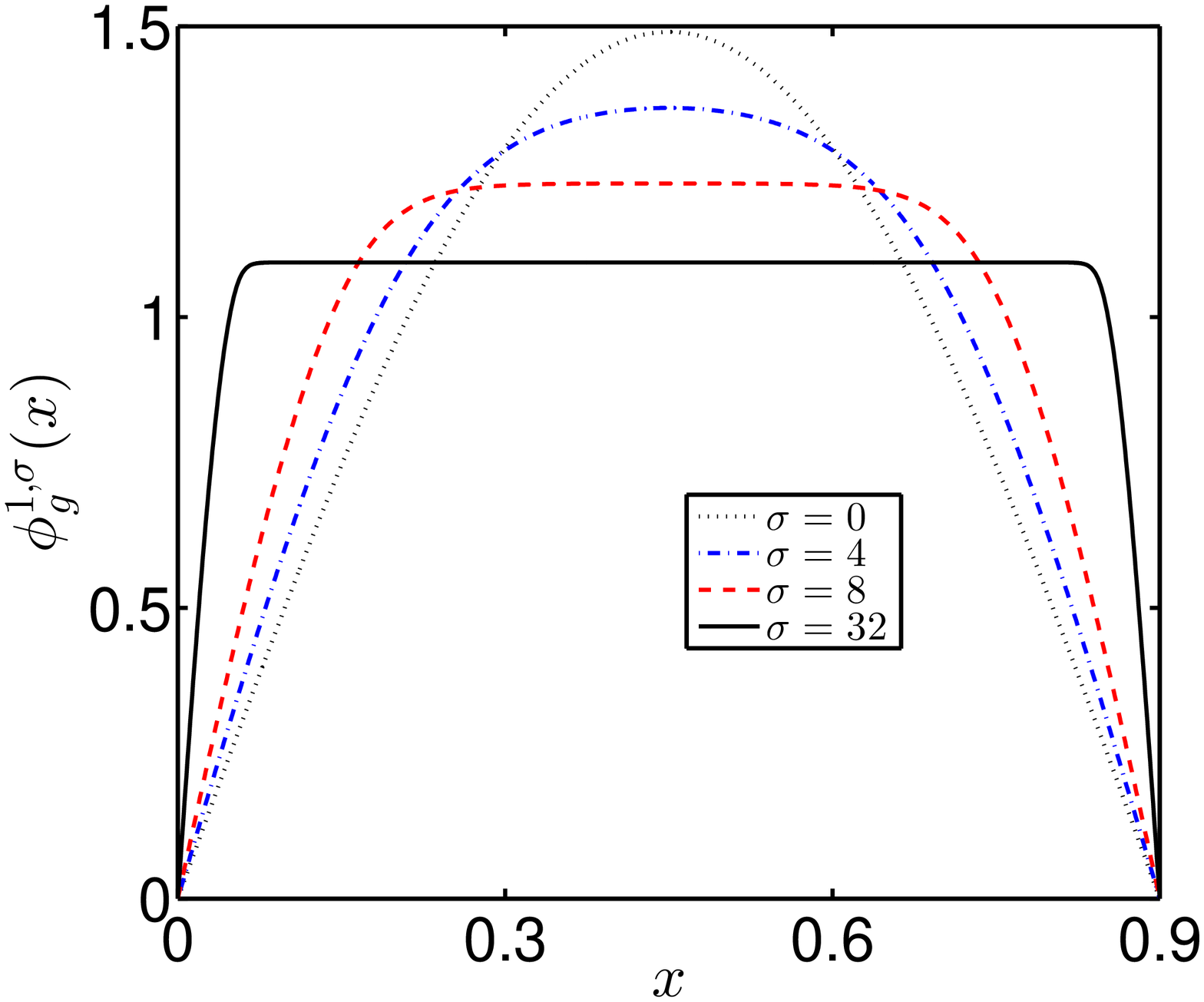,height=5cm,width=6.5cm,angle=0}
\psfig{figure=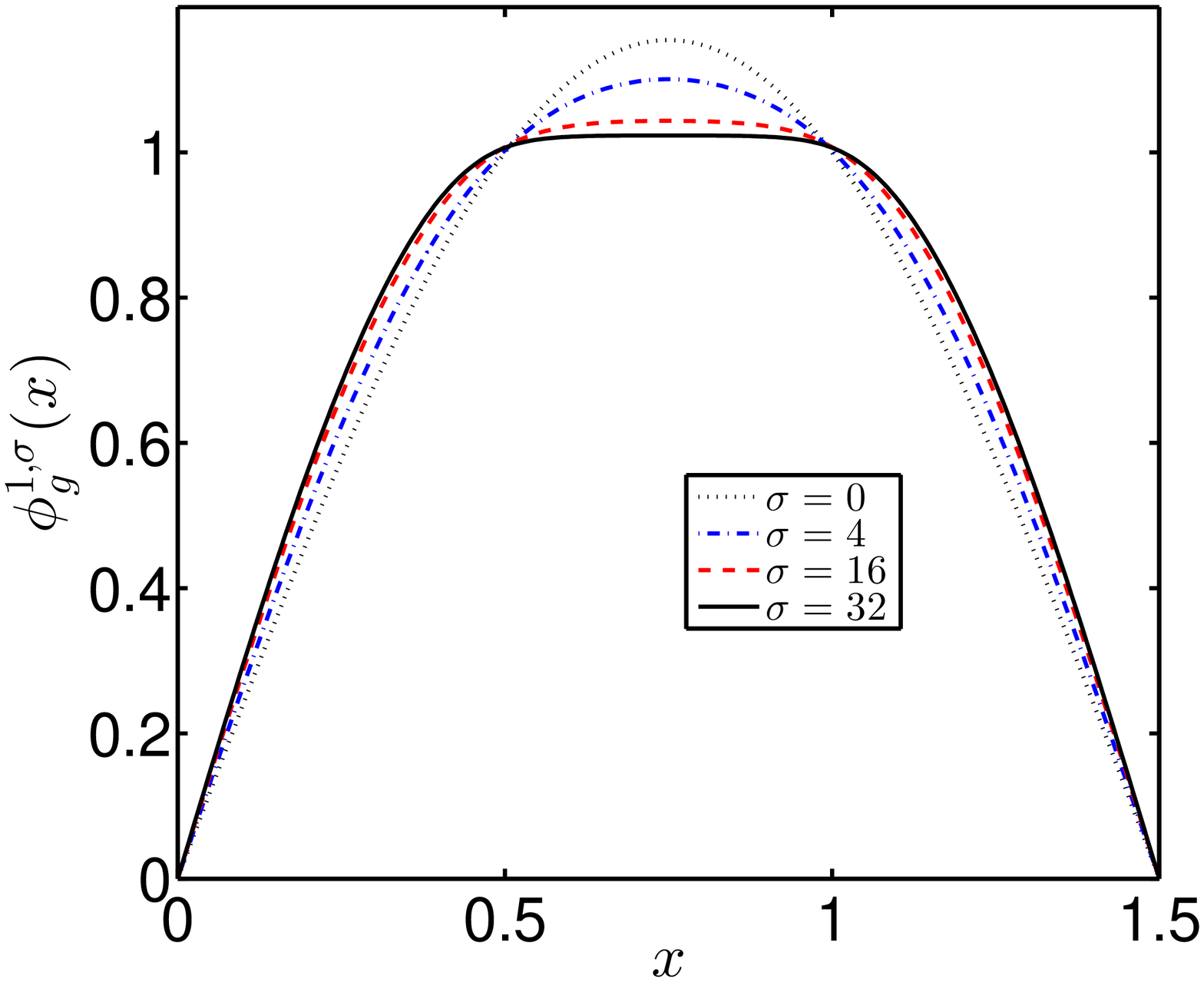,height=5cm,width=6.5cm,angle=0}}
\centerline{\psfig{figure=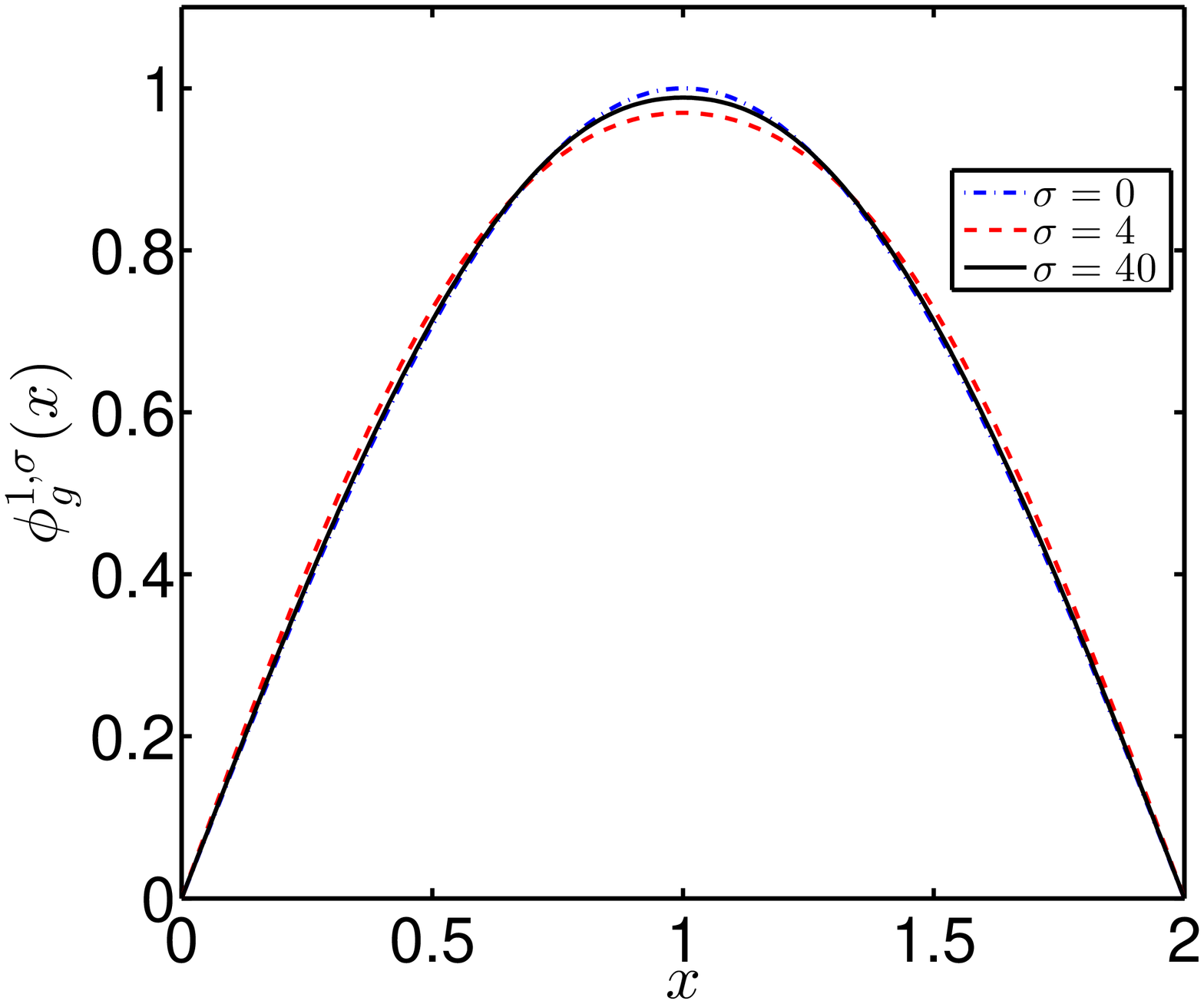,height=5cm,width=6.5cm,angle=0}}
\caption{Ground states of the NLSE  in 1D with $\beta=1$ and the box potential
for different $\sigma$ and $L=0.9<1$ (upper left), $1<L=1.5<2$ (upper right)
and $L=2.0$ (bottom).}
\label{fig:gGPE_box1}
\end{figure}

Now we check our asymptotic results in Lemma \ref{gGPE_box_s}.
Figure~\ref{fig:gGPE_box1} plots the ground states with $\beta=1$
for different $\sigma$ and $L$, and
Figure~\ref{fig:gGPE_box_sE} depicts the ground state energy with $\beta=1$ and $L=1.2$ for
different $\sigma$.
From Figures \ref{fig:gGPE_box1} and \ref{fig:gGPE_box_sE}, our asymptotic results in Lemma \ref{gGPE_box_s} are confirmed.
Figure \ref{fig:box2d} plots the ground state computed in 2D. Similar to the 1D case, the bifurcation of the ground state is observed.

\begin{figure}[htbp]
\centerline{\psfig{figure=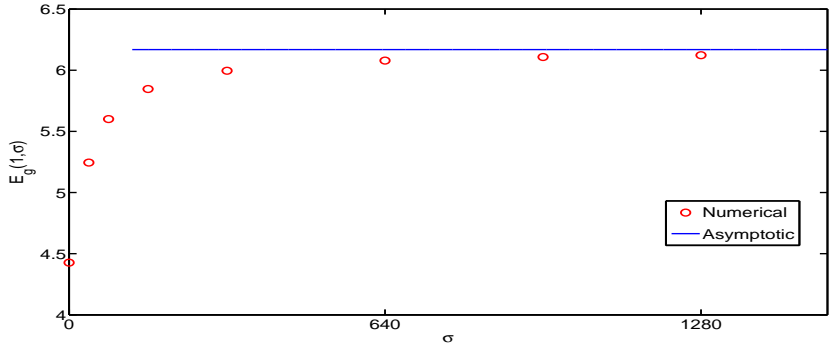,height=5cm,width=13cm,angle=0}}
\caption{Ground state energy of the NLSE  in 1D with $\beta=1$, $L=1.2$ and different $\sigma$ under the box potential.}
\label{fig:gGPE_box_sE}
\end{figure}

\begin{figure}[!]
\centerline{\psfig{figure=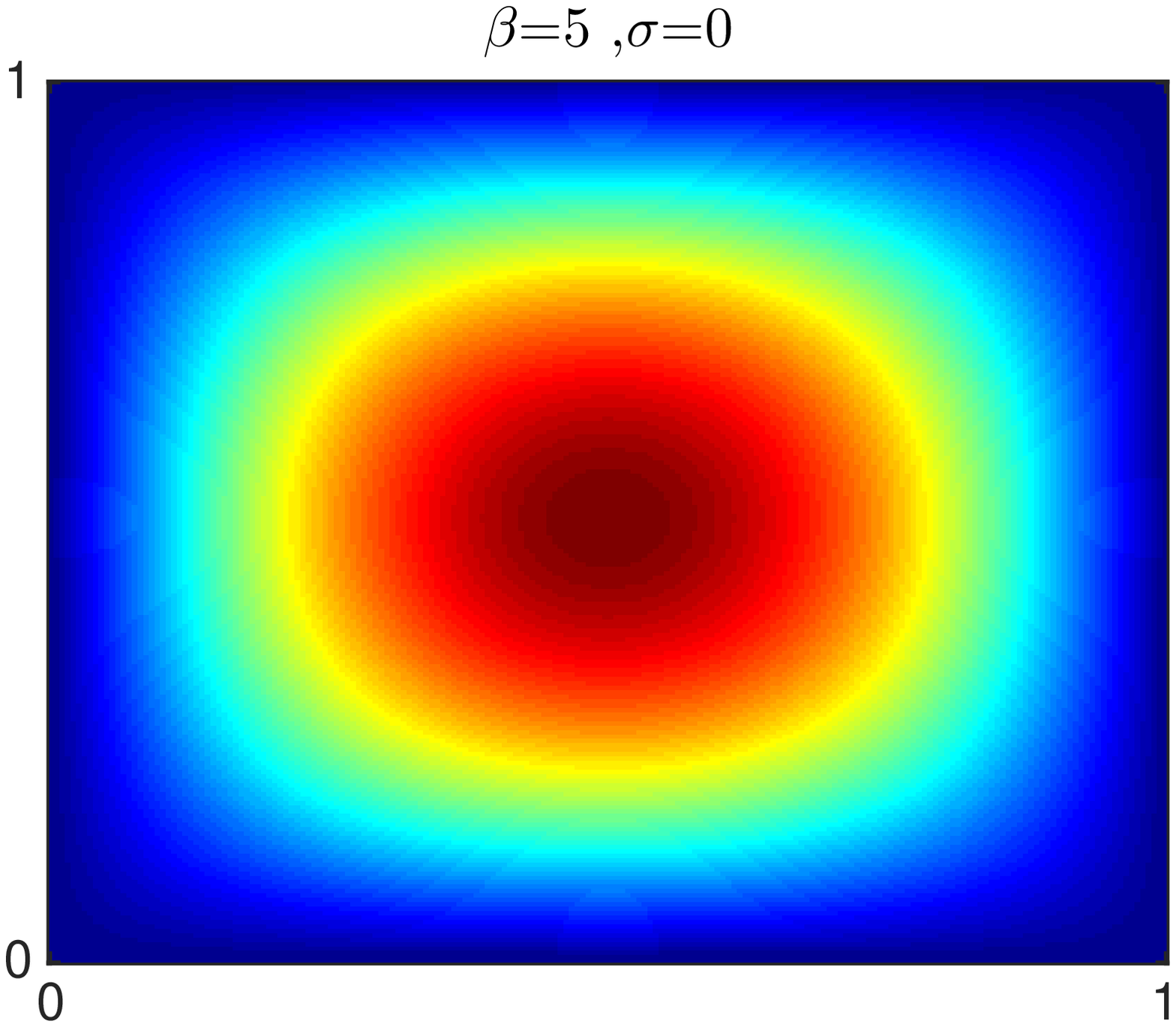,height=3.5cm,width=4cm,angle=0}
\psfig{figure=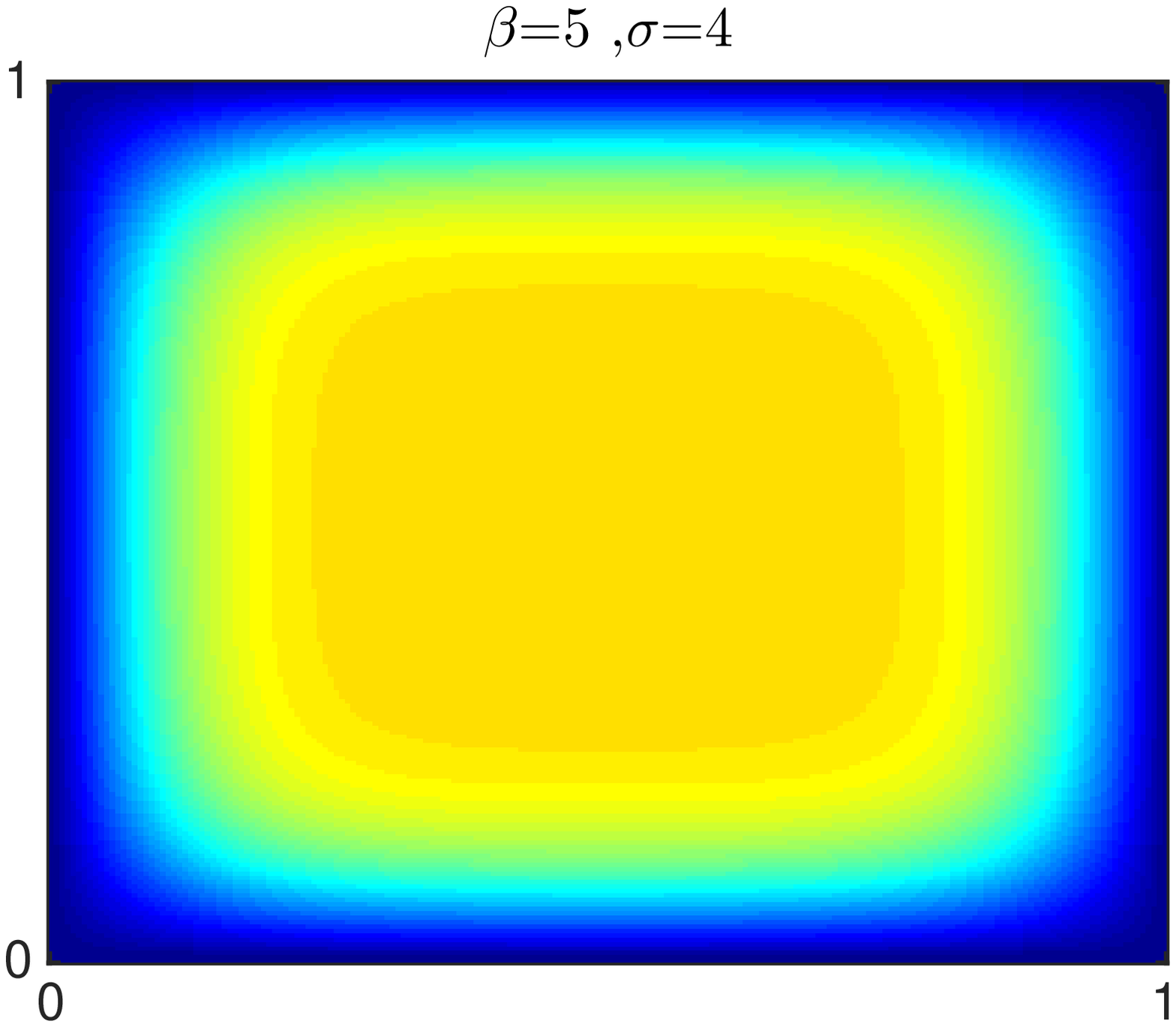,height=3.5cm,width=4cm,angle=0}
\psfig{figure=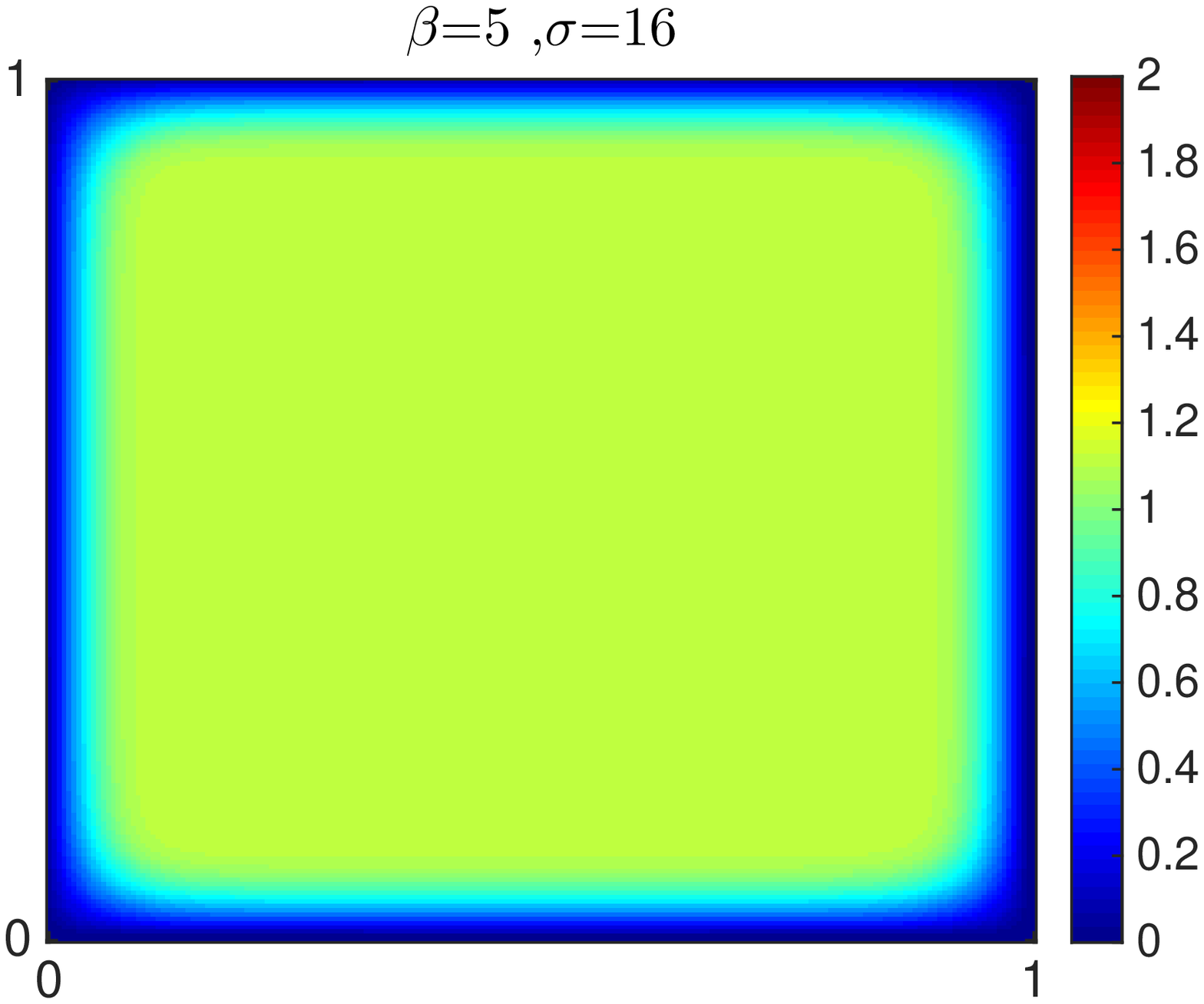,height=3.5cm,width=4.5cm,angle=0}}
\centerline{\psfig{figure=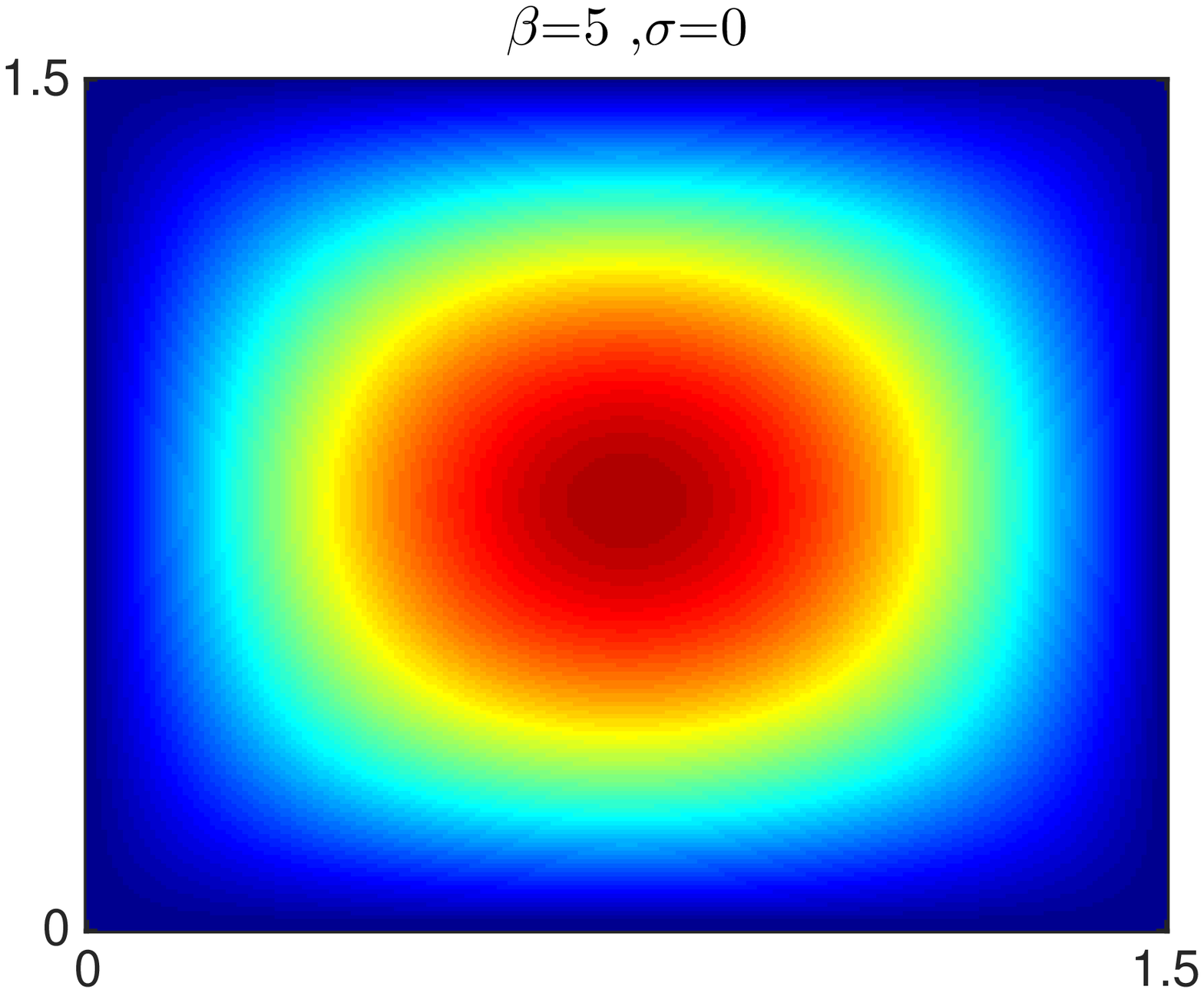,height=3.5cm,width=4cm,angle=0}
\psfig{figure=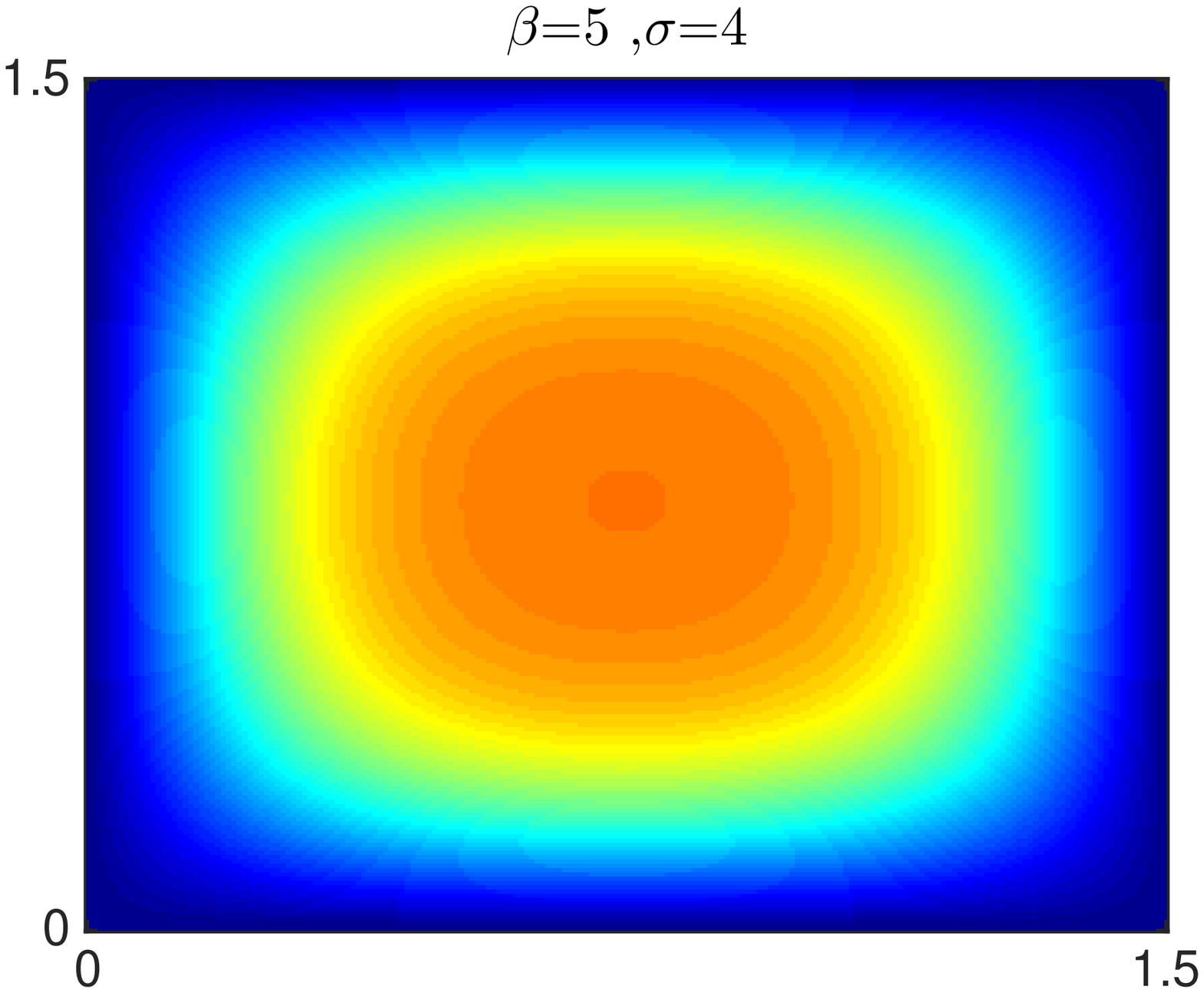,height=3.5cm,width=4cm,angle=0}
\psfig{figure=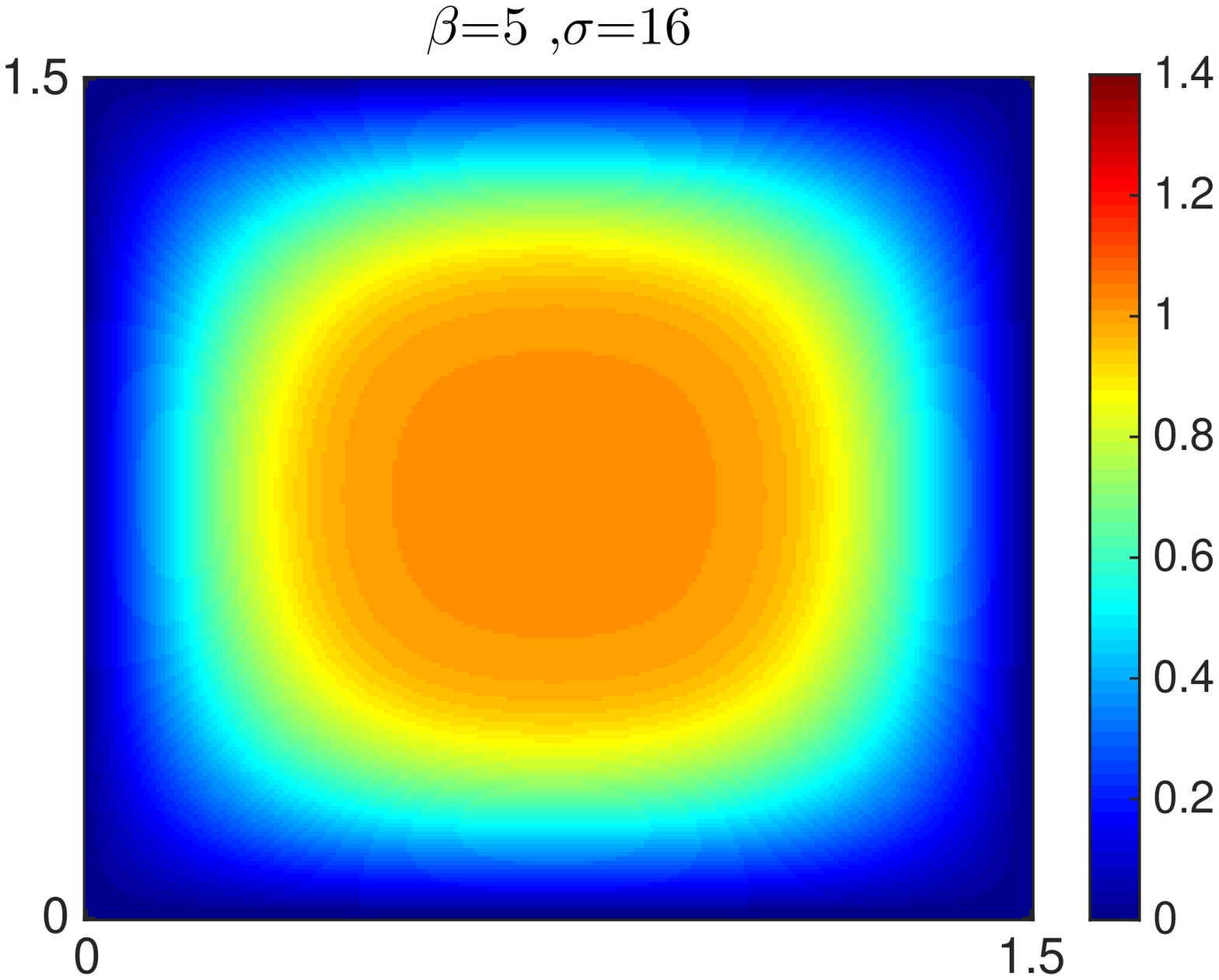,height=3.5cm,width=4.5cm,angle=0}}
\centerline{\psfig{figure=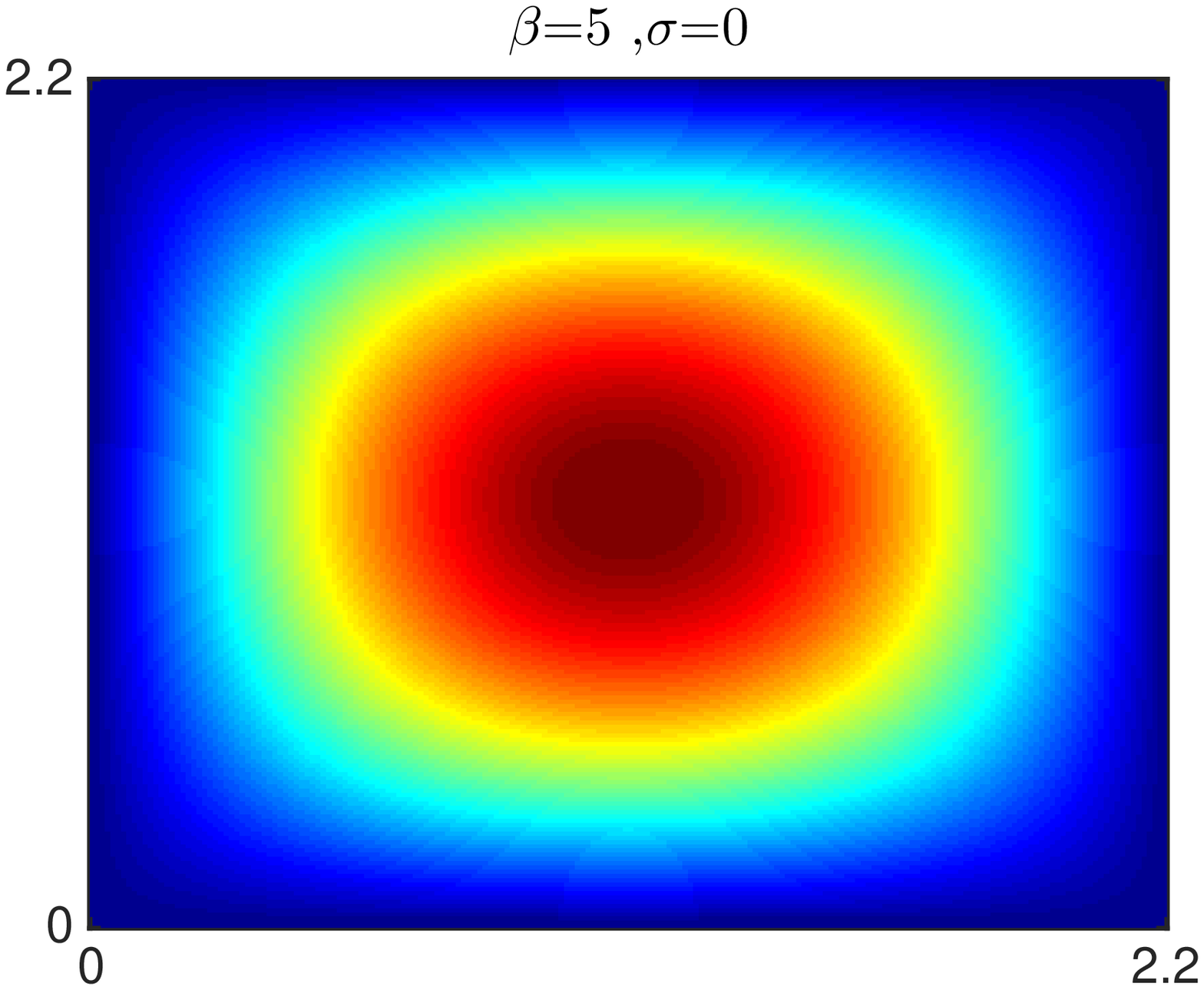,height=3.5cm,width=4cm,angle=0}
\psfig{figure=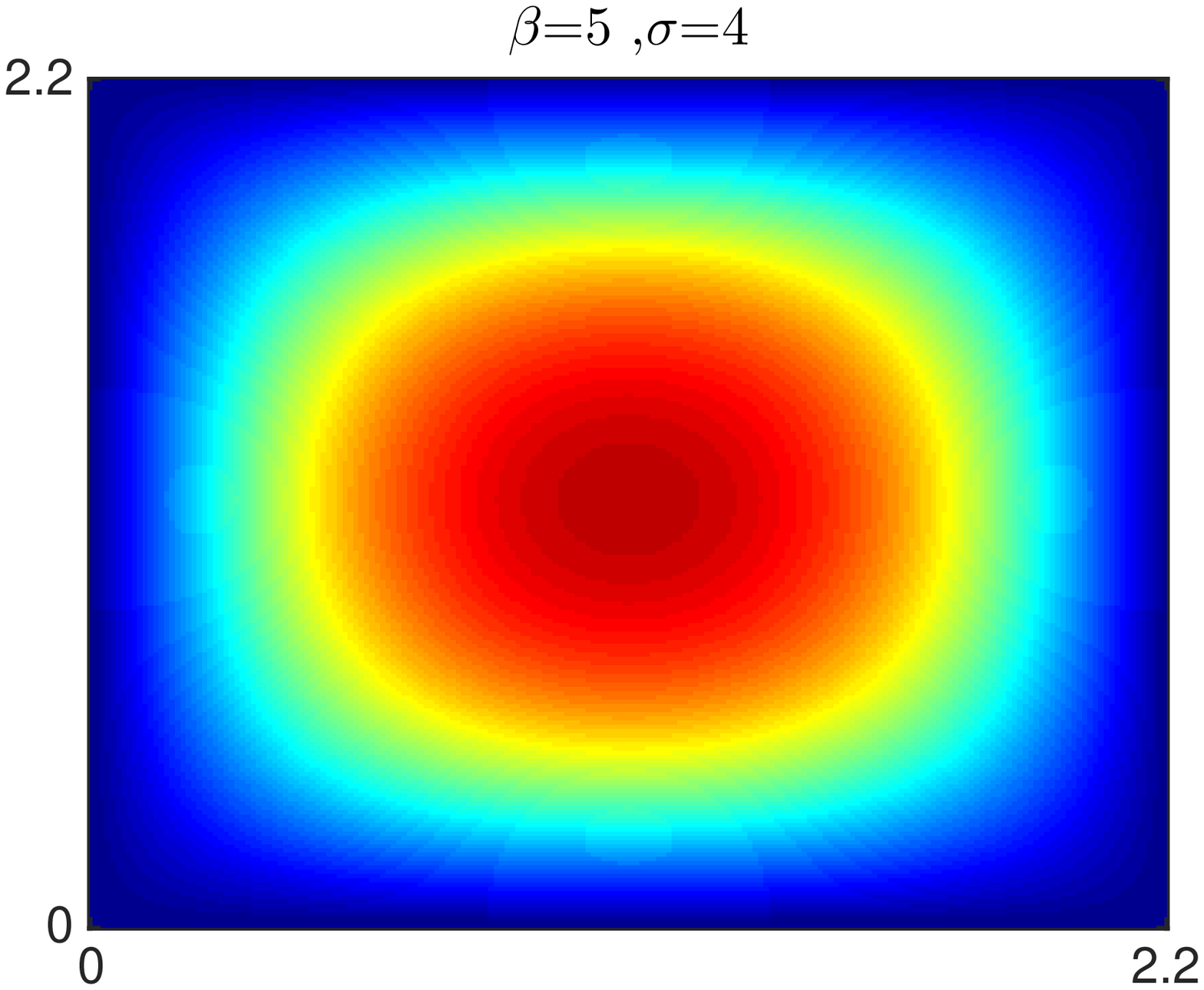,height=3.5cm,width=4cm,angle=0}
\psfig{figure=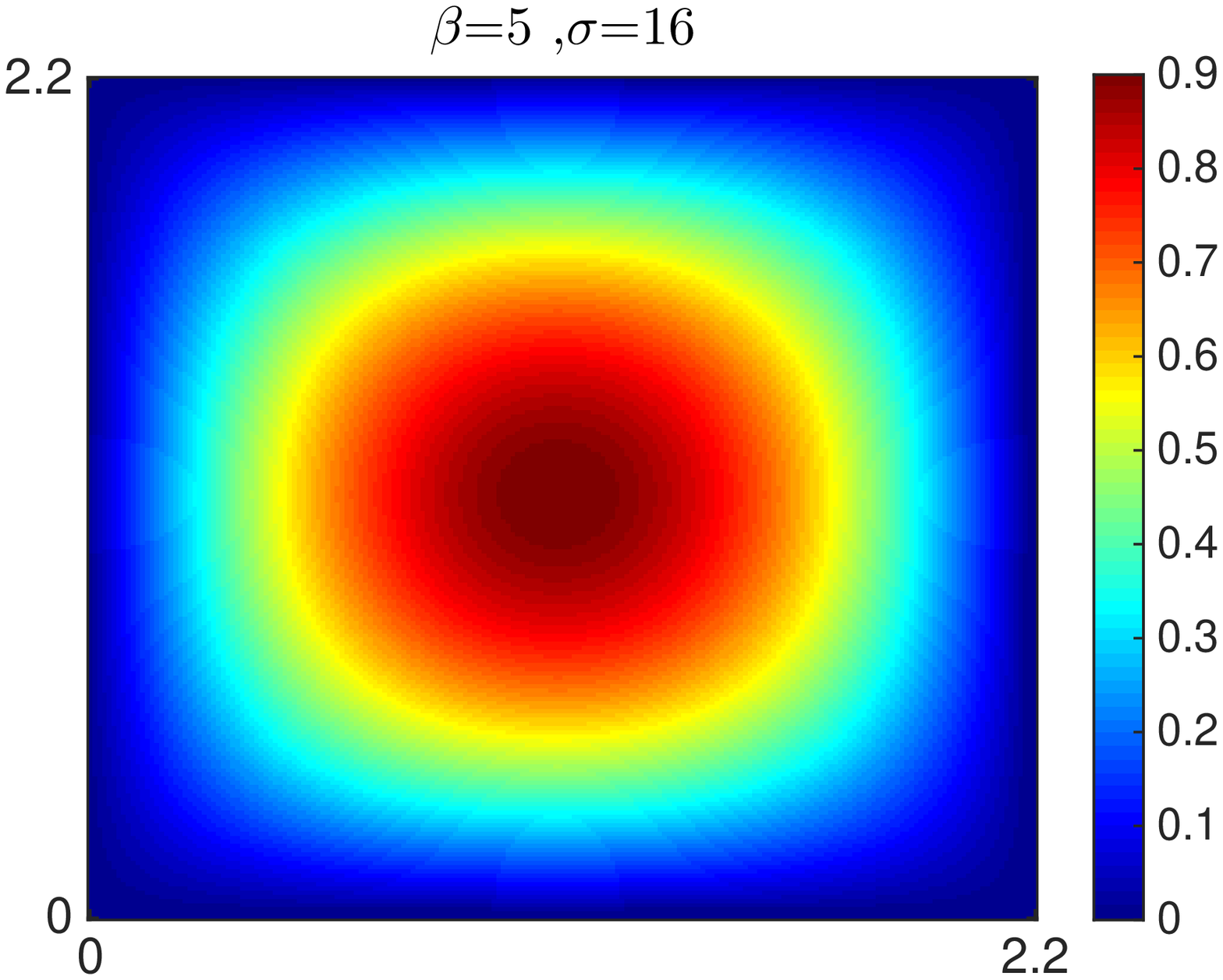,height=3.5cm,width=4.5cm,angle=0}}
\caption{Ground states $\phi_g^{\beta,\sigma}$ under the box potential in $\Omega=(0,1)^2$ (top row), $\Omega=(0,1.5)^2$ (second row) and $\Omega=(0,2.2)^2$ (third row) for $\beta=5$ and $\sigma=0$ (left column), $\sigma=4$ (middle column) and $\sigma=16$ (right column).}
\label{fig:box2d}
\end{figure}

%%%%%%%%%%%%%%%%%%%%%%%%%%%%%%%%
% conclusion
%%%%%%%%%%%%%%%%%%%%%%%%%%%%%%%%
\section{Conclusions}\label{conclusion}
We generalized the existence and uniqueness of the ground state from the Gross-Pitaevskii equation (GPE) to the nonlinear Schr\"{o}dinger equation (NLSE).
 In addition, we studied asymptotically
the ground states and their corresponding energy and chemical potential of
the NLSE with different nonlinearities.
For NLSE with a fixed nonlinearity under a box or a harmonic potential, we derived explicitly the approximations of the ground state and the corresponding energy and chemical potential. 
If we let the nonlinearity component $\sigma\to\infty$ and fix the interaction strength, 
we observed different limiting patterns and called this phenomenon the ``bifurcation of the ground state". The characterization of the ground state in 1D in each pattern and the corresponding leading order energy asymptotics were derived explicitly and verified numerically.
Similar phenomenon was observed in higher dimension case as well.

%==============Appendix================================

\appendix

\section{Proof of Lemma \ref{varphi532}} \label{mgpe_layer}
\setcounter{equation}{0}
\setcounter{figure}{0}
Multiplying (\ref{gGPE_bl}) by $\varphi_{\sigma}^\prime(x)$, we get
\be
\frac{1}{2}\left(\varphi_{\sigma}^2(x)\right)^\prime=
-\frac{1}{4}\left(\left(\varphi_{\sigma}^\prime(x)\right)^2\right)^\prime+
\frac{1}{2\sigma+2}\left(\varphi_{\sigma}^{2\sigma+2}(x)\right)^\prime, \qquad x>0.
\ee
Therefore, we have
\be\label{solvarp78}
\varphi_{\sigma}^2(x)=-\frac{1}{2}\left(\varphi_{\sigma}^\prime(x)\right)^2
+\frac{1}{\sigma+1}\varphi_{\sigma}^{2\sigma+2}(x)+C, \qquad x\ge0,
\ee
where $C$ is the integrating constant.
When $x\rightarrow+\infty$, we have $\varphi_{\sigma}(x)\rightarrow1$ and $\varphi_{\sigma}^\prime(x)
\rightarrow0$. So we get $C=\frac{\sigma}{1+\sigma}$. Letting $x=0$ in (\ref{solvarp78}), we get
\be\label{vph0123}
\varphi_{\sigma}^\prime(0)=\sqrt{\frac{2\sigma}{\sigma+1}}, \qquad \sigma>0.
\ee
For $\sigma>0$, by using the maximum principle,
we have $0\le \varphi_\sigma(x)<1$ for $x\ge0$.
When $\sigma\rightarrow\infty$, we have
$\varphi_{\sigma}^{2\sigma+1}(x)\rightarrow0$ for $x\ge0$.
Therefore, when $\sigma\rightarrow\infty$, noting (\ref{vph0123}), the problem
(\ref{gGPE_bl}) converges to the following linear problem:
\begin{equation}
\begin{cases}
\varphi_{\infty}(x)=-\frac{1}{2}\varphi_{\infty}^{\prime\prime}(x), \qquad x>0,\\
\varphi_{\infty}(0)=0,\qquad \varphi_{\infty}^\prime(0)=\sqrt{2}.
\end{cases}
\end{equation}
Solving this problem, we obtain (\ref{layer:gGPE}) immediately. \hfill $\Box$

 To illustrate the solution $\varphi_\sigma(x)$ of (\ref{gGPE_bl}),
Figure \ref{figvarphis} plots $\varphi_\sigma(x)$ obtained numerically
for different $\sigma$. From this figure, we can see that:
(i) For any $\sigma>0$, $\varphi_\sigma(x)$ is a monotonically increasing function.
(ii) When $\sigma\to+\infty$, $\varphi_\sigma(x)$ converges to $\varphi_\infty(x)$
uniformly for $x\ge0$ (cf. Figure \ref{figvarphis}).

\begin{figure}[htbp]
\centerline{\psfig{figure=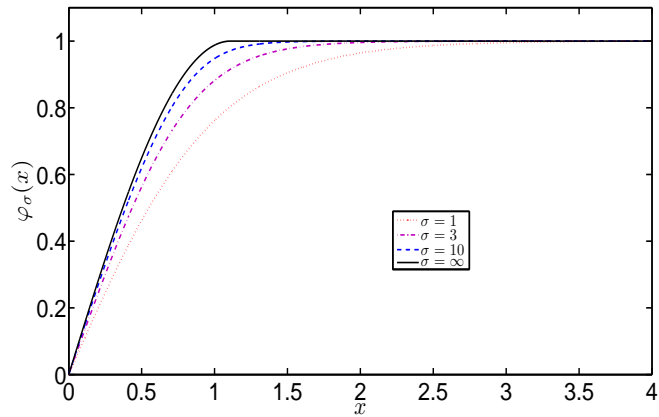,height=6cm,width=10cm,angle=0}}
\caption{Plots of the solution $\varphi_\sigma(x)$ of the problem
(\ref{gGPE_bl}) for $\sigma=1, 3, 10, \infty$
(with the order from right to left).}
\label{figvarphis}
\end{figure}

\section*{Acknowledgments}
This work was supported by the Academic Research Fund of Ministry of Education of Singapore grant No. R-146-000-223-112 and 
I would like to specially thank Prof.  Weizhu Bao in National University of Singapore, who significant contributed to the paper with his valuable comments.

\end{document}